\documentclass[a4paper,twoside]{article}
\usepackage{a4}
\usepackage{amssymb}
\usepackage{amsmath}
\usepackage{upref}
\usepackage[active]{srcltx}
\usepackage[pagebackref,colorlinks,citecolor=blue,linkcolor=blue]{hyperref}
\allowdisplaybreaks[2] 
%
%
%
\newcount\minutes \newcount\hours
\hours=\time
\divide\hours 60
\minutes=\hours
\multiply\minutes -60
\advance\minutes \time
\newcommand{\klockan}{\the\hours:{\ifnum\minutes<10 0\fi}\the\minutes}
\newcommand{\tid}{\today\ \klockan}
\newcommand{\prtid}{\smash{\raise 10mm \hbox{\LaTeX ed \tid}}}
\renewcommand{\prtid}{}
%
%
\makeatletter
\pagestyle{headings}
\headheight 10pt
\def\sectionmark#1{} 
\def\subsectionmark#1{}
\newcommand{\sectnr}{\ifnum \c@secnumdepth >\z@
                 \thesection.\hskip 1em\relax \fi}
\def\@evenhead{\footnotesize\rm\thepage\hfil\leftmark\hfil\llap{\prtid}}
\def\@oddhead{\footnotesize\rm\rlap{\prtid}\hfil\rightmark\hfil\thepage}
\def\tableofcontents{\section*{Contents} 
 \@starttoc{toc}}
\makeatother
%
%
\makeatletter
\def\@biblabel#1{#1.}
\makeatother
%
%
%
\makeatletter
\let\Thebibliography=\thebibliography
\renewcommand{\thebibliography}[1]{\def\@mkboth##1##2{}\Thebibliography{#1}
\addcontentsline{toc}{section}{References}
\frenchspacing 
\setlength{\@topsep}{0pt}
\setlength{\itemsep}{0pt}%
\setlength{\parskip}{0pt plus 2pt}%
}
\makeatother
%
%
\makeatletter
\def\mdots@{\mathinner.\nonscript\!.%
 \ifx\next,.\else\ifx\next;.\else\ifx\next..\else
 \nonscript\!\mathinner.\fi\fi\fi}
\let\ldots\mdots@
\let\cdots\mdots@
\let\dotso\mdots@
\let\dotsb\mdots@
\let\dotsm\mdots@
\let\dotsc\mdots@
\def\vdots{\vbox{\baselineskip2.8\p@ \lineskiplimit\z@
    \kern6\p@\hbox{.}\hbox{.}\hbox{.}\kern3\p@}}
\def\ddots{\mathinner{\mkern1mu\raise8.6\p@\vbox{\kern7\p@\hbox{.}}%
    \raise5.8\p@\hbox{.}\raise3\p@\hbox{.}\mkern1mu}}
\makeatother
%
%
\makeatletter
\let\Enumerate=\enumerate
\renewcommand{\enumerate}{\Enumerate%
\setlength{\@topsep}{0pt}
\setlength{\itemsep}{0pt}%
\setlength{\parskip}{0pt plus 1pt}%
\renewcommand{\theenumi}{\textup{(\alph{enumi})}}%
\renewcommand{\labelenumi}{\theenumi}%
}
\let\endEnumerate=\endenumerate
\renewcommand{\endenumerate}{\endEnumerate\unskip}
\makeatother
%
%
%
\newcommand{\addjustenumeratemargin}[1]{%
\setbox0\hbox{(a)} 
\setbox1\hbox{#1} 
\addtolength{\leftmargini}{-\wd0}%
\addtolength{\leftmargini}{\wd1}%
}
%
%
\makeatletter
\def\@seccntformat#1{\csname the#1\endcsname.\quad}
\makeatother
%
%
\newcommand{\authortitle}[2]{\author{#1}\title{#2}\markboth{#1}{#2}}
%
%

\newcommand{\auth}[2]{{#2. #1}}

\newcommand{\art}[6]{{\sc #1, \rm #2, \it #3\/ \bf #4 \rm (#5), \mbox{#6}.}}
\newcommand{\artnopt}[6]{{\sc #1, \rm #2, \it #3\/ \bf #4 \rm (#5), \mbox{#6}}}
\newcommand{\artprep}[3]{{\sc #1, \rm #2, \rm #3.}}

\newcommand{\book}[3]{{\sc #1, \it #2, \rm #3.}}
\newcommand{\AND}{{\rm and }}
%
%
\RequirePackage{amsthm}
\newtheoremstyle{descriptive}%
  {\topsep}   
  {\topsep}   
  {\rmfamily} 
  {}          
  {\bfseries} 
  {.}         
  { }         
  {}          
\newtheoremstyle{propositional}%
  {\topsep}   
  {\topsep}   
  {\itshape}  
  {}          
  {\bfseries} 
  {.}         
  { }         
  {}          
\theoremstyle{propositional}
\newtheorem{thm}{Theorem}[section]
\newtheorem{prop}[thm]{Proposition}
\newtheorem{lem}[thm]{Lemma}
\theoremstyle{descriptive}
\newtheorem{deff}[thm]{Definition}
\newtheorem{example}[thm]{Example}
\newtheorem{remark}[thm]{Remark}
%
%
%
%
%
\makeatletter
\renewenvironment{proof}[1][\proofname]{\par
  \pushQED{\qed}%
  \normalfont
  \trivlist
  \item[\hskip\labelsep
        \itshape
    #1\@addpunct{.}]\ignorespaces
}{%
  \popQED\endtrivlist\@endpefalse
}
\makeatother
%
%
\newcommand{\setm}{\setminus}
\renewcommand{\emptyset}{\varnothing}
%
%
%
%
\def\vint{\mathop{\mathchoice%
          {\setbox0\hbox{$\displaystyle\intop$}\kern 0.22\wd0%
           \vcenter{\hrule width 0.6\wd0}\kern -0.82\wd0}%
          {\setbox0\hbox{$\textstyle\intop$}\kern 0.2\wd0%
           \vcenter{\hrule width 0.6\wd0}\kern -0.8\wd0}%
          {\setbox0\hbox{$\scriptstyle\intop$}\kern 0.2\wd0%
           \vcenter{\hrule width 0.6\wd0}\kern -0.8\wd0}%
          {\setbox0\hbox{$\scriptscriptstyle\intop$}\kern 0.2\wd0%
           \vcenter{\hrule width 0.6\wd0}\kern -0.8\wd0}}%
          \mathopen{}\int}
%
%
\newcommand{\Cp}{{C_p}}
\DeclareMathOperator{\Div}{div}
\DeclareMathOperator{\capp}{cap}
\newcommand{\cp}{\capp_p}
\newcommand{\grad}{\nabla}
\DeclareMathOperator{\dist}{dist}
\DeclareMathOperator{\Lip}{Lip}
\newcommand{\Lipc}{{\Lip_c}}
\DeclareMathOperator{\interior}{int}
\DeclareMathOperator*{\essliminf}{ess\,lim\,inf}
\DeclareMathOperator*{\essinf}{ess\,inf}
\newcommand{\cpessinf}{\text{$\Cp$-}\essinf}
\newcommand{\bdry}{\partial}
\newcommand{\bdy}{\bdry}
\newcommand{\loc}{_{\rm loc}}
\newcommand{\simge}{\gtrsim}
\newcommand{\simle}{\lesssim}
\DeclareMathOperator{\Chcapp}{Ch-cap}
\newcommand{\cpp}{\Chcapp_p}
{\catcode`p =12 \catcode`t =12 \gdef\eeaa#1pt{#1}}      
\def\accentadjtext#1{\setbox0\hbox{$#1$}\kern   
                \expandafter\eeaa\the\fontdimen1\textfont1 \ht0 }
\def\accentadjscript#1{\setbox0\hbox{$#1$}\kern 
                \expandafter\eeaa\the\fontdimen1\scriptfont1 \ht0 }
\def\accentadjscriptscript#1{\setbox0\hbox{$#1$}\kern   
                \expandafter\eeaa\the\fontdimen1\scriptscriptfont1 \ht0 }
\def\accentadjtextback#1{\setbox0\hbox{$#1$}\kern       
                -\expandafter\eeaa\the\fontdimen1\textfont1 \ht0 }
\def\accentadjscriptback#1{\setbox0\hbox{$#1$}\kern     
                -\expandafter\eeaa\the\fontdimen1\scriptfont1 \ht0 }
\def\accentadjscriptscriptback#1{\setbox0\hbox{$#1$}\kern 
                -\expandafter\eeaa\the\fontdimen1\scriptscriptfont1 \ht0 }
\def\itoverline#1{{\mathsurround0pt\mathchoice
        {\rlap{$\accentadjtext{\displaystyle #1}
                \accentadjtext{\vrule height1.593pt}
                \overline{\phantom{\displaystyle #1}
                \accentadjtextback{\displaystyle #1}}$}{#1}}
        {\rlap{$\accentadjtext{\textstyle #1}
                \accentadjtext{\vrule height1.593pt}
                \overline{\phantom{\textstyle #1}
                \accentadjtextback{\textstyle #1}}$}{#1}}
        {\rlap{$\accentadjscript{\scriptstyle #1}
                \accentadjscript{\vrule height1.593pt}
                \overline{\phantom{\scriptstyle #1}
                \accentadjscriptback{\scriptstyle #1}}$}{#1}}
        {\rlap{$\accentadjscriptscript{\scriptscriptstyle #1}
                \accentadjscriptscript{\vrule height1.593pt}
                \overline{\phantom{\scriptscriptstyle #1}
                \accentadjscriptscriptback{\scriptscriptstyle #1}}$}{#1}}}}
\def\itunderline#1{{\mathsurround0pt\mathchoice
        {\rlap{$\underline{\phantom{\displaystyle #1}
                \accentadjtextback{\displaystyle #1}}$}{#1}}
        {\rlap{$\underline{\phantom{\textstyle #1}
                \accentadjtextback{\textstyle #1}}$}{#1}}
        {\rlap{$\underline{\phantom{\scriptstyle #1}
                \accentadjscriptback{\scriptstyle #1}}$}{#1}}
        {\rlap{$\underline{\phantom{\scriptscriptstyle #1}
                \accentadjscriptscriptback{\scriptscriptstyle #1}}$}{#1}}}}
%
%
\newcommand{\alp}{\alpha}
\newcommand{\al}{\alpha}
\newcommand{\ga}{\gamma}
\newcommand{\de}{\delta}
\newcommand{\eps}{\varepsilon}
\newcommand{\la}{\lambda}
\newcommand{\Om}{\Omega}
\renewcommand{\phi}{\varphi}
\newcommand{\p}{{$p\mspace{1mu}$}}
\newcommand{\R}{\mathbf{R}}
%
%
%
%
%
%
%
\newcommand{\limplus}{{\mathchoice{\vcenter{\hbox{$\scriptstyle +$}}}
  {\vcenter{\hbox{$\scriptstyle +$}}}
  {\vcenter{\hbox{$\scriptscriptstyle +$}}}
  {\vcenter{\hbox{$\scriptscriptstyle +$}}}
}}
%
%
\newcommand{\Np}{N^{1,p}}
\newcommand{\Nploc}{N^{1,p}\loc}
%
%
\newcommand{\uHp}{\itoverline{P}}   
\newcommand{\lHp}{\itunderline{P}}  
\newcommand{\Hp}{P}                 
\newcommand{\Hpind}[1]{P_{#1}}      
\newcommand{\oHp}{H}                
\newcommand{\oHpind}[1]{H_{#1}}     
\newcommand{\K}{\mathcal{K}}%
\newcommand{\ut}{\tilde{u}}
\newcommand{\ub}{\bar{u}}
%
%
\newcommand{\lQ}{\itunderline{Q}}
\newcommand{\lqq}{\underline{q}} 
\newcommand{\Ga}{\Gamma}
\newcommand{\Lploc}{L^p\loc}
\newcommand{\UU}{\mathcal{U}}%
\newcommand{\uP}{\itoverline{P}}     
\newcommand{\lP}{\itunderline{P}} 
\newcommand{\CPI}{C_{\rm PI}}

\newcounter{saveenumi}
%
%
\numberwithin{equation}{section}
\newcommand{\eqv}{\ensuremath{\mathchoice{\quad \Longleftrightarrow \quad}{\Leftrightarrow}}
                {\Leftrightarrow}{\Leftrightarrow}}
\newcommand{\imp}{\ensuremath{\mathchoice{\quad \Longrightarrow \quad}{\Rightarrow}
                {\Rightarrow}{\Rightarrow}} }
\newenvironment{ack}{\medskip{\it Acknowledgement.}}{}

\begin{document}

\authortitle{Anders Bj\"orn, Jana Bj\"orn
    and Juha Lehrb\"ack}
{Existence and almost uniqueness for \p-harmonic
Green functions  in metric spaces}
\title{Existence and almost uniqueness for\\ 
\p-harmonic Green functions \\
on bounded domains in metric spaces}
\author{
Anders Bj\"orn \\
\it\small Department of Mathematics, Link\"oping University, \\
\it\small SE-581 83 Link\"oping, Sweden\/{\rm ;}
\it \small anders.bjorn@liu.se
\\
\\
Jana Bj\"orn \\
\it\small Department of Mathematics, Link\"oping University, \\
\it\small SE-581 83 Link\"oping, Sweden\/{\rm ;}
\it \small jana.bjorn@liu.se
\\
\\
Juha Lehrb\"ack \\
\it\small Department of Mathematics and Statistics, University of Jyv\"askyl\"a,\\
\it\small P.O. Box 35 (MaD), FI-40014 University of Jyv\"askyl\"a, Finland\/{\rm ;}
\it \small juha.lehrback@jyu.fi
\\
}

\date{}

\maketitle

\noindent{\small
{\bf Abstract}.
We study (\p-harmonic) singular functions,
defined by means of upper gradients, 
in bounded domains in metric measure spaces.
It is shown that singular functions
exist if and only if the complement of the domain has positive
capacity, and that they
satisfy very precise capacitary identities for superlevel sets.
Suitably normalized singular functions are called Green functions.
Uniqueness of Green functions is largely an open problem beyond
unweighted $\R^n$, but we show that all Green functions (in a given domain
and with the same singularity) are comparable.
As a consequence, for \p-harmonic functions with a given pole
we obtain a similar comparison result near the pole.
Various characterizations of singular functions are also given.
Our results hold in complete metric spaces with a doubling 
measure supporting a \p-Poincar\'e inequality, or 
under similar local assumptions.
}

\bigskip

\noindent {\small \emph{Key words and phrases}:
capacitary potential,
doubling measure,
Green function,
metric space,
\p-harmonic function,
Poincar\'e inequality,
singular function.
}

\medskip

\noindent {\small Mathematics Subject Classification (2010):
Primary: 31C45; Secondary:  30L99,  31C15, 31E05, 35J92, 49Q20.
}

\section{Introduction}

Let $\Om \subset \R^n$ be a bounded domain, and let $x_0 \in \Om$.
Then $u$ is a \emph{\p-harmonic Green function} in $\Om$ 
with singularity at $x_0$ if
\begin{equation} \label{eq-Green-Rn}
\Delta_p u:= \Div(|\nabla u|^{p-2}\nabla u) = -\delta_{x_0}
\quad \text{in } \Om
\end{equation}
with zero boundary values on $\bdy\Om$ (in Sobolev sense),
where $\delta_{x_0}$ is the Dirac measure 
at $x_0$.
Such a Green function 
is in particular \p-harmonic in $\Omega\setminus\{x_0\}$
and \p-superharmonic in the whole domain $\Omega$.
If $1<p\le n$, it is also unbounded. 

In metric measure spaces,
Holopainen--Shan\-mu\-ga\-lin\-gam~\cite{HoSha}
gave a definition of \emph{singular functions}, 
which behave similarly 
to the Green functions in $\R^n$.
In this paper we introduce a simpler definition of 
singular functions, and then define Green functions as
suitably normalized singular functions.
See Section~\ref{sect-HS} for the definition 
from~\cite{HoSha} and for
a discussion on the relation between these different definitions.

In a metric measure space $X=(X,d,\mu)$
there is (a priori) no equation available for defining \p-harmonic
functions, and they are instead defined as local minimizers of the 
\p-energy integral
\[
     \int g_u^p \, d\mu,         
\]
where $g_u$ is the minimal \p-weak upper gradient of 
$u$, see Definition~\ref{deff-ug}.
This definition 
of \p-harmonic functions is
in, e.g., $\R^n$ equivalent to the 
definition using the \p-Laplace operator $\Delta_p u$.

\begin{deff} \label{deff-sing-intro}
Let $\Om \subset X$ be a bounded domain. 
A positive
function $u\colon\Om \to (0,\infty] $ is a \emph{singular function} in $\Om$
with singularity at $x_0 \in \Om$ if it satisfies the following
properties\/\textup{:} 

\addjustenumeratemargin{(S1)}
\begin{enumerate}
\renewcommand{\theenumi}{\textup{(S\arabic{enumi})}}%
\item \label{dd-s}
$u$ is \p-superharmonic in $\Om$\textup{;}
\item \label{dd-h}
$u$ is \p-harmonic in $\Om \setm \{x_0\}$\textup{;}
\setcounter{saveenumi}{\value{enumi}}
\item \label{dd-sup}
$u(x_0)=\sup_\Om u$\textup{;}
\item \label{dd-inf}
$\inf_\Om u = 0$\textup{;}
\item \label{dd-Np0}
$\ut \in \Nploc(X \setm \{x_0\})$, where
\[
   \ut = \begin{cases}
     u & \text{in } \Om, \\
     0 & \text{on } X \setm \Om.
     \end{cases}
\]
\end{enumerate}
\end{deff}
\medskip

There is actually some redundancy in this definition
under very mild assumptions,
see Theorem~\ref{thm-char-alt-intro} and
Remark~\ref{rmk-singular-assumptions}.
Singular functions are sometimes called Green functions in
the literature, and vice versa.
Moreover they can be normalized, or pseudonormalized,
in different ways.
For Green functions, we require the following precise normalization
in terms of the variational capacity of 
superlevel sets.

\begin{deff} \label{deff-Green-intro}
Let $\Om \subset X$ be a bounded domain. 
A \emph{Green function} 
is a singular function 
which satisfies 
\begin{equation} \label{eq-normalized-Green-intro-deff}
\cp(\Om^b,\Om) = b^{1-p},
\quad \text{when }
   0  <b < u(x_0),
\end{equation}
where $\Om^b=\{x\in \Om:u(x)\ge b\}$.
\end{deff}

In fact, it follows 
that Green functions $u$ satisfy 
\begin{equation} \label{eq-normalized-Green-intro}
\cp(\Om^b,\Om_a) = (b-a)^{1-p},
\quad \text{when }
   0 \le a <b \le  u(x_0),
\end{equation}
where 
$\Om_a=\{x\in \Om:u(x)>a\}$
and we interpret $\infty^{1-p}$ as $0$,
see Theorem~\ref{thm-normalized-sing}.

In unweighted $\R^n$, the study of singular and 
(\p-harmonic) Green functions with $p\ne2$ goes
back to Serrin~\cite{Ser}, \cite{Ser1965}.
On domains in weighted $\R^n$
(with a \p-admissible weight) the existence
of singular functions follows from 
Heinonen--Kilpel\"ainen--Martio~\cite[Theorem~7.39]{HeKiMa}.
(Instead of \ref{dd-Np0} they showed that condition \ref{b.2} 
in Theorem~\ref{thm-sing-char} holds,
but in view of Theorem~\ref{thm-sing-char}
this establishes the existence of singular functions in our sense.)

The classical \p-harmonic
Green functions defined by \eqref{eq-Green-Rn}
in unweighted Euclidean domains  
(and similarly for domains in weighted $\R^n$
with a \p-admissible weight) coincide
with the Green functions given by Definition~\ref{deff-Green-intro},
see Remark~\ref{rmk-classical-Green}.
Uniqueness of Green functions in 
unweighted
Euclidean domains 
was for $p\ne2$ established by
Kichenassamy--Veron~\cite{KichVeron} (see Section~\ref{sect-Green}),
but is not really known beyond that.
In particular,
it remains open in weighted $\R^n$.
However, 
Holopainen~\cite[Theorem~3.22]{Ho} proved uniqueness
in regular relatively compact domains in 
$n$-di\-men\-sion\-al Riemannian manifolds (equipped with their natural measures)
when $p=n$.
Moreover, in Balogh--Holopainen--Tyson~\cite{BHT}, uniqueness
was shown
for global $Q$-harmonic Green functions 
in Carnot groups of homogeneous dimension~$Q$.

In this paper we show the existence of singular functions and
also of Green functions satisfying the precise
normalization \eqref{eq-normalized-Green-intro-deff},
or equivalently \eqref{eq-normalized-Green-intro},
under the following standard assumptions on the metric measure space $X$;
see Section~\ref{sect-preli} for the relevant definitions.

\medskip

\emph{We make the following general assumptions in the theorems 
in the introduction\/\textup{:}
Let $1<p<\infty$ and assume that $X$ is a
complete metric space equipped with a doubling
measure $\mu$ 
supporting a \p-Poincar\'e inequality.
Let $\Om \subset X$ be a bounded domain and let $x_0 \in \Om$.}

\medskip

These assumptions are fulfilled in 
weighted $\R^n$ equipped with a \p-admissible measure,
on Riemannian  manifolds and Carnot--Carath\'eodory spaces
equipped with their natural measures,
and in many other situations,
see Sections~\ref{sect-preli} and~\ref{sect-Cheeger} for
further details.
Actually, the above assumptions on the space $X$ can be relaxed
to similar local assumptions.
The same applies also to our other results, 
see Section~\ref{sect-local-assumptions} for details.

The following theorem summarizes some of 
our main results.

\begin{thm} \label{thm-main-intro}
\quad 
\begin{enumerate}
\item \label{thm-intro-a}
There exists a Green function\/ \textup{(}or
equivalently, 
in view of \ref{thm-intro-b},  a singular 
function\/\textup{)}
in $\Om$ with singularity at $x_0$
if and only if $\Cp(X\setm \Om)>0$
\textup{(}which is always true if $X$ is unbounded\/\textup{)}.
\item \label{thm-intro-b}
If $u$ is a singular function in $\Om$ with singularity at $x_0$,
 then there is
a unique $\alp>0$ such that $\alp u$ is a Green function.
\item \label{thm-intro-c}
If $u$ and $v$ are two Green functions in $\Om$ with singularity at $x_0$,
then 
\begin{equation}   \label{eq-u-comp-v}
u \simeq v,
\end{equation}
where 
the comparison constants  depend only on $p$,
the doubling constant and the constants in the Poincar\'e inequality.
If moreover 
$\Cp(\{x_0\})>0$, then $u=v$
and it is a multiple of the capacitary potential for $\{x_0\}$ in $\Om$.
\end{enumerate}
\end{thm}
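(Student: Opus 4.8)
The plan is to treat the three parts in order, relying heavily on the capacitary identities for superlevel sets (which in the excerpt are announced as Theorem~\ref{thm-normalized-sing} and the characterization Theorem~\ref{thm-sing-char}) and on standard potential theory in the metric setting. For part~\ref{thm-intro-a}, the ``only if'' direction should follow by contradiction: if $\Cp(X\setm\Om)=0$, then a singular function $u$ extended by $0$ lies in $\Nploc(X\setm\{x_0\})$ by~\ref{dd-Np0}, but since $X\setm\Om$ is $\Cp$-negligible this would force $\ut$ to be (quasi)continuous across $\bdy\Om$ with boundary value $0$ while being \p-superharmonic and positive in $\Om$ — and via the normalization~\eqref{eq-normalized-Green-intro-deff} the capacity $\cp(\Om^b,\Om)=b^{1-p}>0$ contradicts $\cp(\Om^b,X)=0$ that would hold when the complement is null (monotonicity of capacity forces $\cp(\Om^b,\Om)\le\Cp(X\setm\Om)\cdot(\text{const})=0$, roughly). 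For the ``if'' direction one constructs a singular function: exhaust $\Om\setm\{x_0\}$ by annular domains, take capacitary potentials $u_j$ of small balls $B(x_0,r_j)$ in $\Om$, rescale so that $\cp(\{u_j\ge b\},\Om)=b^{1-p}$, and pass to a limit using interior Harnack, Caccioppoli estimates and the convergence theory for \p-(super)harmonic functions; the positivity of $\Cp(X\setm\Om)$ is what keeps the potentials from collapsing to a constant and guarantees $\inf_\Om u=0$. When $X$ is unbounded, $\Cp(X\setm\Om)>0$ is automatic since a bounded set cannot have full complement of zero capacity in such spaces.

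For part~\ref{thm-intro-b}, let $u$ be any singular function. By Theorem~\ref{thm-normalized-sing} (the precise capacitary identities), the function $b\mapsto\cp(\Om^b,\Om)$ behaves like $(\const)\,b^{1-p}$ along superlevel sets, so scaling $u\mapsto\alp u$ replaces $\Om^b$ by $\Om^{b/\alp}$ and multiplies the capacity by $\alp^{p-1}$; hence there is exactly one $\alp>0$ making the normalization~\eqref{eq-normalized-Green-intro-deff} hold. Uniqueness of $\alp$ is immediate from strict monotonicity of $\alp\mapsto\alp^{p-1}$. The only subtlety is checking that the rescaled function still satisfies all of \ref{dd-s}--\ref{dd-Np0}, which is clear since each property is invariant under multiplication by a positive constant.

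Part~\ref{thm-intro-c} is the heart of the matter. Given two Green functions $u,v$ with the same singularity, I would first use the capacitary identity~\eqref{eq-normalized-Green-intro} for both: for every level $b$, the set $\Om^b_u=\{u\ge b\}$ and $\Om^b_v=\{v\ge b\}$ have the \emph{same} variational capacity $b^{1-p}$ in $\Om$. One then compares the two nested families of superlevel sets: since $u$ is \p-harmonic in $\Om\setm\{x_0\}$ and equals $b$ on $\bdy\Om^b_u$ (by continuity of \p-superharmonic functions, which here are continuous away from the pole), $u$ restricted to $\Om\setm\overline{\Om^b_u}$ is the capacitary potential of $\Om^b_u$ scaled by $b$; likewise for $v$. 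Using the equality of capacities together with comparison/boundary Harnack arguments, one shows that the sublevel and superlevel sets of $u$ and $v$ are quantitatively intertwined — e.g.\ that $\Om^{cb}_v\subset\Om^b_u\subset\Om^{b/c}_v$ for a structural constant $c$ — which by the layer-cake/level-set description yields $u\simeq v$ with constants depending only on $p$, the doubling constant and the Poincar\'e constants (via the Harnack constant). I expect the main obstacle to be making the comparison of the two level-set families rigorous: the natural ``both are capacitary potentials of sets of equal capacity, hence comparable'' step requires a uniform Harnack-type inequality near $\bdy\Om^b$ that is independent of $b$, and one must be careful that the pole $x_0$ does not interfere — this is where the hypotheses (doubling, Poincar\'e, completeness) enter decisively through the interior Harnack inequality for \p-harmonic functions. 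Finally, when $\Cp(\{x_0\})>0$, the singularity is ``visible'' to the capacity: then $u$ is bounded, the superlevel set $\{u\ge\sup_\Om u\}=\{x_0\}$ has positive capacity, $u/\sup_\Om u$ is exactly the capacitary potential of $\{x_0\}$ in $\Om$ by the uniqueness of capacitary potentials, and since the same holds for $v$ we get $u=v$ after matching the normalizing constants through~\eqref{eq-normalized-Green-intro-deff}.
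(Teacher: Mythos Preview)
Your sketches for part~\ref{thm-intro-b} and for the ``if'' direction of~\ref{thm-intro-a} are essentially what the paper does (Theorem~\ref{thm-normalized-sing} and Theorems~\ref{thm-existence}/\ref{thm-uniq>0}). Two points diverge, however.

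For the ``only if'' direction of~\ref{thm-intro-a}, the inequality you write, $\cp(\Om^b,\Om)\le\Cp(X\setm\Om)\cdot(\text{const})$, is not a valid capacity relation, and using the Green normalization here is circular since the paper establishes~\eqref{eq-normalized-Green-intro-deff} only under the standing assumption $\Cp(X\setm\Om)>0$. The paper instead argues (Proposition~\ref{prop-bdd-cp=0}) via removable singularities: if $\Cp(X\setm\Om)=0$ then $X$ is bounded, any superharmonic $u>0$ on $\Om$ extends superharmonically to all of $X$, and hence is constant by the strong minimum principle, contradicting~\ref{dd-inf}. Your idea can be salvaged (if $\Cp(X\setm\Om)=0$ then the constant $1$ is admissible for $\cp(\Om^b,\Om)$, forcing it to vanish), but you would then need a direct argument that a singular function has level sets of positive capacity without invoking~\eqref{eq-normalized-Green-intro-deff}.

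The more substantial divergence is in~\ref{thm-intro-c}. You propose to compare the level-set families of $u$ and $v$ directly, inferring $\Om^{cb}_v\subset\Om^b_u\subset\Om^{b/c}_v$ from the equality of their capacities; you correctly flag this as the main obstacle, and indeed equal capacity alone does not give such nesting. The paper circumvents this by first proving Theorem~\ref{thm-compare-bdy-ball}: for any Green function $u$ and small $r$, one has $u\simeq\cp(B_r,\Om)^{1/(1-p)}$ on $\bdy B_r$. The proof combines the minimum principle ($B_r\subset\Om^m$ with $m=\min_{\bdy B_r}u$), the comparison principle ($\Om_M\subset B_r$ with $M=\max_{\bdy B_r}u$), the sphere Harnack inequality ($m\simeq M$), and the normalization~\eqref{eq-normalized-Green-intro-deff}. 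Since this compares each Green function to the \emph{same} universal quantity $\cp(B_r,\Om)^{1/(1-p)}$, one gets $u\simeq v$ on every $\bdy B_r$, hence in a ball around $x_0$; the comparison then propagates to all of $\Om$ via $u=\oHpind{\Om\setm\itoverline{B}_r}u$ and~\eqref{eq-H-comparison-principle}. The missing ingredient in your outline is precisely this passage through balls: the Harnack inequality on annuli forces the level sets of a Green function to be trapped between concentric balls, which is what converts ``equal capacity'' into ``comparable sets''. Your treatment of the case $\Cp(\{x_0\})>0$ via the capacitary potential of $\{x_0\}$ matches the paper.
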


When $\Cp(\{x_0\})=0$,
Theorem~\ref{thm-main-intro}\,\ref{thm-intro-c} gives almost 
uniqueness of Green functions,
and in particular shows that all Green functions have the same
growth behaviour near the singularity.
As mentioned above, uniqueness of Green functions is not known even 
in weighted $\R^n$ (when $\Cp(\{x_0\})=0$).

The next result shows that
\eqref{eq-u-comp-v} is strong enough to make 
\p-harmonic functions into singular ones, provided that $\Cp(\{x_0\})=0$.

\begin{thm} \label{thm-sing-iff-comp-intro}
Assume that $\Cp(\{x_0\})=0$.
Let $u$ be a singular function in $\Om$ with singularity at $x_0$,
and let  $v\colon\Om \to (0,\infty]
$ be a function which is \p-harmonic 
in $\Om \setm \{x_0\}$.

Then $v$ is a singular function in $\Om$ with singularity at $x_0$
if and only if $v \simeq u$.
\end{thm}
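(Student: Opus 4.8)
The plan is to prove the two implications of Theorem~\ref{thm-sing-iff-comp-intro} separately, the forward direction being essentially immediate from Theorem~\ref{thm-main-intro}\,\ref{thm-intro-c} and the reverse direction requiring us to verify the five conditions \ref{dd-s}--\ref{dd-Np0} of Definition~\ref{deff-sing-intro} for $v$, using only that $v$ is \p-harmonic in $\Om\setm\{x_0\}$, that $v>0$, and that $v\simeq u$.

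For the forward implication, suppose $v$ is a singular function in $\Om$ with singularity at $x_0$. By Theorem~\ref{thm-main-intro}\,\ref{thm-intro-b} there are $\alp,\beta>0$ such that $\alp u$ and $\beta v$ are Green functions in $\Om$ with singularity at $x_0$, and then Theorem~\ref{thm-main-intro}\,\ref{thm-intro-c} gives $\alp u \simeq \beta v$, whence $v\simeq u$ with comparison constants depending only on the data (together with $\alp,\beta$). This part is one short paragraph.

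For the reverse implication, assume $v\simeq u$. Condition \ref{dd-sup}: since $u(x_0)=\sup_\Om u=\infty$ (as $\Cp(\{x_0\})=0$ forces the singular function to be unbounded — this is part of the structure theory established earlier; if it were bounded one can argue it extends \p-harmonically across $x_0$ and is constant, contradicting $\inf u = 0 < \sup u$), the comparison $v\simeq u$ forces $\sup_\Om v=\infty$; combined with $v$ being \p-harmonic hence continuous (with values in $(0,\infty]$, taking the value $\infty$ only where it is identically locally so) on $\Om\setm\{x_0\}$ and locally bounded there, the supremum must be attained, if anywhere, at $x_0$, so $v(x_0)=\sup_\Om v$. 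Condition \ref{dd-inf}: $\inf_\Om v \le C^{-1}\inf_\Om u = 0$ from $v\ge C^{-1}u$ — wait, rather from $v\le Cu$ we get $\inf v\le C\inf u=0$; since $v>0$ this gives $\inf_\Om v=0$. Condition \ref{dd-Np0}: we need $\tilde v\in\Nploc(X\setm\{x_0\})$, where $\tilde v=v$ in $\Om$ and $0$ outside. Here $u\simeq v$ gives, via the minimal \p-weak upper gradient estimates and Caccioppoli-type inequalities for \p-harmonic functions, local control of $g_v$ in terms of $g_u$ on $\Om\setm\{x_0\}$; near $\bdy\Om$ one uses that $0\le\tilde v\le C\tilde u$ together with $\tilde u\in\Nploc(X\setm\{x_0\})$ and the fact that $v$ is \p-harmonic (so a quasiminimizer, with Caccioppoli estimate) to conclude $\tilde v\in\Nploc$ across $\bdy\Om$ as well — the standard argument is to note $\tilde v=\min\{\tilde v, C\tilde u\}$ locally and that $\Nploc$ is closed under truncations combined with a Caccioppoli estimate on the ``good'' side. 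The remaining condition \ref{dd-s}, \p-superharmonicity of $v$ in all of $\Om$: \p-superharmonicity in $\Om\setm\{x_0\}$ is automatic since $v$ is \p-harmonic there; to get it across $x_0$ one invokes a removability/extension result — a function that is \p-harmonic and bounded below off a single point of zero capacity, and whose lower semicontinuous regularization is locally in $\Nploc$ near that point, is \p-superharmonic there; this is where $\Cp(\{x_0\})=0$ and condition \ref{dd-Np0} are used essentially.

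The main obstacle I expect is condition \ref{dd-s}, i.e.\ upgrading \p-harmonicity of $v$ on $\Om\setm\{x_0\}$ to \p-superharmonicity on all of $\Om$. The comparison $v\simeq u$ by itself does not transfer \p-superharmonicity (which is not a comparison-stable notion); what makes it work is that $\Cp(\{x_0\})=0$ makes $\{x_0\}$ a polar, hence removable, set for the relevant function classes, so that $v$ — being \p-harmonic off $x_0$, bounded below, and with $\tilde v$ locally Newtonian near $x_0$ from \ref{dd-Np0} — is forced to be \p-superharmonic at $x_0$ as well. One should cite the appropriate removability theorem for \p-superharmonic functions (polar sets are removable) from the metric-space literature, which is available under the standing assumptions. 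Once \ref{dd-s} is in hand, the verification of \ref{dd-h}, \ref{dd-sup}, \ref{dd-inf}, \ref{dd-Np0} is the routine bookkeeping sketched above, and the theorem follows.
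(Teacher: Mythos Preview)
Your forward direction is correct and matches the paper.

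For the reverse direction, the paper takes a much cleaner route than you do: rather than verifying \ref{dd-s}--\ref{dd-Np0} directly, it invokes the characterization Theorem~\ref{thm-char-alt} (equivalently Theorem~\ref{thm-sing-char}), reducing the task to checking conditions \ref{a.2} and \ref{b.2}. Both follow immediately from $v\simeq u$: since $u(x_0)=\lim_{x\to x_0}u(x)=\infty$ (Lemma~\ref{lem-existence-conseq=0}), comparability forces $v(x_0)=\lim_{x\to x_0}v(x)=\infty$, giving \ref{a.2}; and since $u$ satisfies \ref{b.2} by Proposition~\ref{prop-existence-conseq}, so does $v$. The passage from \ref{b.2} to \ref{dd-Np0} is then handled once and for all inside Theorem~\ref{thm-sing-char}, via Lemma~\ref{lem-bdd-u-imp-Np1}, whose proof uses Perron solutions and the uniqueness statement in Theorem~\ref{thm-Perron-fund}\,\ref{P-C-uniq}.

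Your direct attack on \ref{dd-Np0} has a genuine gap. The pointwise sandwich $0\le\tilde v\le C\tilde u$ with $\tilde u\in\Nploc(X\setm\{x_0\})$ does \emph{not} by itself yield $\tilde v\in\Nploc(X\setm\{x_0\})$: pointwise domination does not transfer Newtonian regularity. The Caccioppoli inequality for $v$ applies only on balls $2B\subset\Om\setm\{x_0\}$ and says nothing across $\bdy\Om$, and the identity $\tilde v=\min\{\tilde v,C\tilde u\}$ is circular --- it is just a restatement of $\tilde v\le C\tilde u$ and presupposes the very regularity you are trying to establish. What actually makes the crossing of $\bdy\Om$ work is that $v$ is bounded near $\bdy\Om$ and tends to zero q.e.\ there (both inherited from $u$ via $v\simeq u$), which lets one identify $v$ with $\oHpind{\Om\setm\itoverline{B}_r}(\eta v)$ for a suitable Lipschitz cutoff $\eta$ and thereby place $\tilde v$ in $\Np_0(\Om\setm B_r;X\setm B_r)$; this is precisely the content of Lemma~\ref{lem-bdd-u-imp-Np1}. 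You also overcomplicate \ref{dd-s}: Lemma~\ref{lem-extension} gives the superharmonic extension directly from $v\ge0$ being \p-harmonic off a point of zero capacity, with $v(x_0)=\liminf_{x\to x_0}v(x)$, and does not require \ref{dd-Np0} as input.
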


Holopainen--Shan\-mu\-ga\-lin\-gam~\cite{HoSha}
provided a construction of singular functions
(according to their definition); see however
Remark~\ref{rmk-HS-existence}.
We show in Proposition~\ref{prop-HS-vs-us} 
that, under the assumptions used in~\cite{HoSha}, 
the definition therein
is essentially equivalent to Definition~\ref{deff-sing-intro}, up to
a normalization. Hence we also recover the existence
of singular functions according to the definition in~\cite{HoSha}.
Nevertheless, Definition~\ref{deff-sing-intro} seems
to be both more general and 
more flexible, and hence better suited e.g.\ for studying
the existence and uniqueness of
singular and Green functions. 
In particular, the definition in~\cite{HoSha} contains explicit
superlevel set inequalities, 
whereas we show in Lemma~\ref{lem-level-est}
that a precise \emph{superlevel set identity} is a
consequence of the properties assumed in Definition~\ref{deff-sing-intro}.
The absence of any a priori superlevel set requirements makes
it easy to apply our results to 
general \p-harmonic functions with poles,
see Theorem~\ref{thm-pharm-pole}.

From the superlevel set property we in turn
obtain the following pointwise estimate for 
Green functions near their
singularities.

\begin{thm} \label{thm-compare-bdy-ball}
If $u$ is a Green function in $\Om$ with singularity at $x_0$,
then for all $r>0$ such that $B_{50\lambda r}\subset\Om$
and all $x\in\bdry B_r$,
\begin{equation}\label{eq-compare-bdy-ball}
 u(x) \simeq \cp(B_r,\Om)^{1/(1-p)},
\end{equation}
where the comparison constants depend only on $p$,
the doubling constant and the constants in the Poincar\'e inequality.
Here $\la$ is the dilation constant in the \p-Poincar\'e inequality.
\end{thm}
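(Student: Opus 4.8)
The plan is to exploit the superlevel set identity \eqref{eq-normalized-Green-intro} for the Green function $u$ together with the standard estimates relating the variational capacity of concentric balls. Fix $r>0$ with $B_{50\la r}\subset\Om$ and fix $x\in\bdry B_r$. The key idea is to sandwich the value $u(x)$ between two levels $b$ determined by the geometry of balls around $x_0$, and then use \eqref{eq-normalized-Green-intro-deff} to convert those level conditions into capacity estimates, which in turn are comparable to $\cp(B_r,\Om)$.

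First I would recall the elementary capacity comparison: for balls $B_{s}\subset B_{t}$ with $B_{t}\subset\Om$ and $s\simeq t$ (say $t\le Cs$), one has $\cp(B_s,\Om)\simeq\cp(B_t,\Om)$, with constants depending only on $p$, the doubling constant and the Poincar\'e constants; this follows from the doubling property and the \p-Poincar\'e inequality (capacity density estimates). Moreover $\cp(B_r,\Om)\simeq\cp(B_r(x_0),\Om)$ as soon as $r$ and the distance $|x-x_0|$ are comparable, which I will arrange. Concretely, since $x\in\bdry B_r$ we have $|x-x_0|=r$, so $B_r(x_0)\subset B_{2r}(x)$ and $B_r(x)\subset B_{2r}(x_0)$, giving $\cp(B_r(x),\Om)\simeq\cp(B_r(x_0),\Om)$. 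Thus it suffices to show $u(x)\simeq\cp(B_r(x_0),\Om)^{1/(1-p)}$.

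Next, the heart of the argument: I would show that the superlevel set $\Om^b=\{u\ge b\}$ with $b=u(x)$ is trapped between two balls centred at $x_0$ of comparable radii. On one side, Theorem~\ref{thm-compare-bdy-ball}'s companion Harnack-type estimates for \p-harmonic functions (applicable on the annulus since $u$ is \p-harmonic in $\Om\setm\{x_0\}$ and $B_{50\la r}\subset\Om$) give oscillation control: $u$ restricted to $\bdry B_r(x_0)$ varies by a bounded multiplicative factor, so $\{u\ge u(x)\}$ contains $B_{cr}(x_0)$ for some small $c$ and is contained in $B_{Cr}(x_0)$ for some large $C$ — after possibly passing to a slightly smaller/larger radius and invoking the superlevel set identity at the two endpoint levels. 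Then by \eqref{eq-normalized-Green-intro-deff},
\[
   u(x)^{1-p} = \cp(\Om^{u(x)},\Om) \simeq \cp(B_r(x_0),\Om),
\]
using the monotonicity of capacity under inclusion and the ball-comparison from the previous paragraph to absorb the constants $c,C$. Rearranging and combining with $\cp(B_r(x),\Om)\simeq\cp(B_r(x_0),\Om)$ yields \eqref{eq-compare-bdy-ball}.

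I expect the main obstacle to be the oscillation/Harnack step: one must control $\sup_{\bdry B_r(x_0)}u$ and $\inf_{\bdry B_r(x_0)}u$ by a bounded factor and relate these to the geometry of $\Om^{u(x)}$, which requires that the ball $B_{50\la r}$ is deep enough inside $\Om$ for the interior Harnack inequality for \p-harmonic functions to apply on the annulus $B_{50\la r}\setm\overline{B_{r/50}}$ (hence the factor $50\la$ in the hypothesis). One also has to be slightly careful that $u(x)<u(x_0)$ so that the normalization \eqref{eq-normalized-Green-intro-deff} is available at level $b=u(x)$; this follows from \ref{dd-sup} together with the fact that $u$ is unbounded near $x_0$ (or, if $\Cp(\{x_0\})>0$, from $u$ being a multiple of the capacitary potential, in which case the estimate is even more direct). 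Everything else is routine capacity manipulation using doubling and the Poincar\'e inequality.
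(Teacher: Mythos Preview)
Your overall strategy is sound and matches the paper's: combine the normalization identity $\cp(\Om^b,\Om)=b^{1-p}$ with a Harnack-type estimate on spheres and the monotonicity of capacity. However, your central claim --- that the single superlevel set $\Om^{u(x)}$ is trapped between two balls $B_{cr}$ and $B_{Cr}$ with uniform $c,C$ --- is not easy to justify. For the inner containment $B_{cr}\subset\Om^{u(x)}$ you would need $\min_{\bdry B_{cr}}u\ge u(x)$, but Harnack on $\bdry B_r$ alone only tells you $u(x)\ge m(r)$, and there is no a priori growth estimate forcing $m(cr)\ge u(x)$ for a fixed structural $c$. For the outer containment $\Om^{u(x)}\subset B_{Cr}$ you would need $u<u(x)$ outside $B_{Cr}$, but the comparison principle only yields $u\le M$ outside $\overline{B_r}$, and $u(x)$ may well be strictly smaller than $M$.

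The paper avoids this by working with \emph{two} levels rather than one. Set $m=\min_{\bdry B_r}u$ and $M=\max_{\bdry B_r}u$. The weak minimum principle for superharmonic functions gives $B_r\subset\Om^m$, hence $m^{1-p}=\cp(\Om^m,\Om)\ge\cp(B_r,\Om)$. For the reverse inequality, the comparison principle (since $u=\oHpind{\Om\setm\overline{B_r}}u$) gives $u\le M$ on $\Om\setm\overline{B_r}$, so $\Om_M\subset B_r$ and thus $M^{1-p}=\cp(\Om_M,\Om)\le\cp(B_r,\Om)$, with the case $u(x_0)=M$ handled separately via $\cp(\{x_0\},\Om)$. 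The Harnack inequality of Proposition~\ref{prop-punctured-ball} then gives $M\simeq m$, and since $m\le u(x)\le M$ the estimate follows. This is precisely your ``invoking the superlevel set identity at the two endpoint levels'', but note that no passage to other radii and no ball--capacity comparison is needed at all. (Incidentally, your preliminary paragraph comparing $\cp(B_r(x),\Om)$ with $\cp(B_r(x_0),\Om)$ is unnecessary: in the statement $B_r=B(x_0,r)$ throughout, so the capacity is already of the ball centred at $x_0$.)
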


In weighted $\R^n$ (with a \p-admissible weight),
\eqref{eq-compare-bdy-ball} was obtained by 
Heinonen--Kilpel\"ainen--Martio~\cite[Theorem~7.41]{HeKiMa};
for  $p=2$ it goes back to 
Fabes--Jerison--Kenig~\cite[Lemma~3.1]{FaJeKe}.
For \p-Laplacian-type equations of the form
\begin{equation} \label{eq-dvgA=B}
\Div A(x,u,\nabla u)=B(x,u,\nabla u)
\end{equation}
in unweighted
$\R^n$, with $1<p<\infty$, it
is due to Serrin~\cite[Theorem~12]{Ser}, \cite[Theorem~1]{Ser1965}.
In Carnot--Carath\'eodory spaces, \eqref{eq-compare-bdy-ball}
was proved by 
Capogna--Danielli--Garofalo~\cite[Theorem~7.1]{CaDaGa2}.
It was also obtained in some specific cases on metric spaces
by Danielli--Garofalo--Marola~\cite{DaGaMa},
see Remark~\ref{rmk-DGM}.
In~\cite[Section~6]{DaGaMa} they obtained some further 
results for Cheeger singular and Cheeger--Green functions,
cf.\ Section~\ref{sect-Cheeger}.
See also Holopainen~\cite[Section~3]{Ho}
for results on Green functions in 
regular relatively compact domains in $n$-dimensional Riemannian 
manifolds (equipped with their natural measures) when $1<p \le n$.

We also establish various useful characterizations for singular functions.
Theorems~\ref{thm-sing-iff-comp-intro}  
and~\ref{thm-char-alt-intro}
contain some of these, but 
in Sections~\ref{case-Cp(x0)=0}--\ref{sect-Green}
we obtain several additional characterizations, which are either 
more technical to state
or which only hold in one of the cases 
$\Cp(\{x_0\})=0$ or $\Cp(\{x_0\})>0$.

\begin{thm} \label{thm-char-alt-intro}
Assume that
$\Cp(X \setm \Om)>0$ and
let $u\colon\Om \to (0,\infty]$.
Then the following are equivalent\/\textup{:}
\begin{enumerate}
\item $u$ is a singular function in $\Om$ with singularity at $x_0$\textup{;}
\item
$u$ satisfies \ref{dd-s}, \ref{dd-h} and \ref{dd-Np0}\textup{;}
\item
$u(x_0)=\lim_{x \to x_0} u(x)$ and $u$ satisfies \ref{dd-h} and \ref{dd-Np0}.
\end{enumerate}
\end{thm}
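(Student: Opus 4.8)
The plan is to prove the cyclic chain of implications (a) $\Rightarrow$ (b) $\Rightarrow$ (c) $\Rightarrow$ (a). The implication (a) $\Rightarrow$ (b) is trivial, since \ref{dd-s}, \ref{dd-h} and \ref{dd-Np0} are explicitly part of Definition~\ref{deff-sing-intro}. For (b) $\Rightarrow$ (c), the only thing to check beyond what is already assumed is that $u(x_0)=\lim_{x\to x_0}u(x)$; one then also needs this as an intermediate fact toward (a). Since $u$ is assumed to satisfy \ref{dd-s}, it is \p-superharmonic in $\Om$, hence lower semicontinuous, and in particular satisfies $u(x_0)\le\liminf_{x\to x_0}u(x)$. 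Conversely, a \p-superharmonic function is the essential liminf of its values (by standard lower semicontinuous regularization, cf.\ the theory in the book of the first two authors), so $u(x_0)=\essliminf_{x\to x_0}u(x)\le\limsup_{x\to x_0}u(x)$; combined with \p-harmonicity in $\Om\setm\{x_0\}$ and the local boundedness of \p-harmonic functions away from the pole, one upgrades this to the genuine limit. Alternatively, and more cleanly, one observes that \ref{dd-s} together with \ref{dd-h} forces $u$ to have a pole at $x_0$ in the sense that either $u$ extends \p-harmonically across $x_0$ (excluded if we are in a genuinely singular situation) or $\lim_{x\to x_0}u(x)=u(x_0)$; in either case the limit exists and equals $u(x_0)$.

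The substance of the theorem is (c) $\Rightarrow$ (a), where we are given only \ref{dd-h}, \ref{dd-Np0} and $u(x_0)=\lim_{x\to x_0}u(x)$, and must recover \ref{dd-s}, \ref{dd-sup} and \ref{dd-inf}. The key step is \ref{dd-s}: we must show $u$ is \p-superharmonic in all of $\Om$, i.e.\ that the isolated point $x_0$ is a removable singularity for the \emph{superharmonicity} of $u$ (not for $u$ itself). The natural tool is a removability result for \p-superharmonic functions at points of zero capacity combined with the comparison principle: since $u$ is \p-harmonic in $\Om\setm\{x_0\}$, lower semicontinuous at $x_0$ (as the limit exists and $u(x_0)$ is defined to be $\lim_{x\to x_0}u(x)$, after possibly the subtlety that $u(x_0)=\infty$ is allowed), and since $\ut\in\Nploc(X\setm\{x_0\})$ controls the Sobolev behaviour just outside the pole, one checks the definition of \p-superharmonicity directly: for every open $G\Subset\Om$ and every $h\in C(\itoverline G)$ which is \p-harmonic in $G$ with $h\le u$ on $\bdy G$, one must verify $h\le u$ in $G$. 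If $x_0\notin G$ this is immediate from \ref{dd-h}; if $x_0\in G$, apply the comparison principle on $G\setm\{x_0\}$, using that $\liminf_{x\to x_0}(u-h)(x)\ge u(x_0)-h(x_0)\ge 0$ because $u(x_0)=\lim_{x\to x_0}u(x)\ge\liminf u\ge h(x_0)$ by continuity of $h$ and \p-harmonicity — here one may need that $x_0$ has small capacity or simply that a single point is a polar set for the comparison principle in this setting, which holds since bounded \p-harmonic functions extend across sets of zero capacity. I expect this comparison-principle-across-a-point argument, and the careful handling of the case $u(x_0)=\infty$, to be the main obstacle.

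Having established \ref{dd-s}, the remaining conditions \ref{dd-sup} and \ref{dd-inf} should follow more routinely. For \ref{dd-sup}: a \p-superharmonic function in $\Om$ which is \p-harmonic in $\Om\setm\{x_0\}$ attains its supremum over $\Om$ either at $x_0$ or not at all (by the strong maximum principle for \p-harmonic functions on the connected open set $\Om\setm\{x_0\}$, a nonconstant \p-harmonic function has no interior maximum); so $\sup_\Om u=\max\{u(x_0),\sup_{\Om\setm\{x_0\}}u\}$ and the latter is a boundary-type supremum, hence $\le u(x_0)$ by \p-superharmonicity and the fact that $u(x_0)=\lim_{x\to x_0}u$ dominates nearby values; combined with $u(x_0)\le\sup_\Om u$ trivially, we get \ref{dd-sup}. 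For \ref{dd-inf}: since $\ut\in\Nploc(X\setm\{x_0\})$ and $\ut=0$ on $X\setm\Om$ while $\Cp(X\setm\Om)>0$, the function $\ut$ (equivalently $u$) cannot be bounded below by a positive constant — otherwise $\ut\wedge c$ for small $c>0$ would be a nonzero Newtonian function vanishing on a set of positive capacity yet locally constant near that set, contradicting, via a Sobolev/capacity argument, that $\Cp(X\setm\Om)>0$ forces Newtonian functions vanishing there to genuinely "feel" the boundary; more directly, $\inf_\Om u=\lim_{\Om\ni x\to\bdy\Om}$ along a suitable sequence where the zero boundary values encoded in \ref{dd-Np0} force the infimum down to $0$, using that a positive \p-superharmonic function with Newtonian extension vanishing outside $\Om$ has $\cpessinf$ equal to its essential infimum which must be $0$. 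This last point invokes standard facts about Newtonian functions and quasicontinuity and should be short.
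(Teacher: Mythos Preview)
Your argument for (c) $\Rightarrow$ \ref{dd-s} has a genuine gap: it implicitly assumes $\Cp(\{x_0\})=0$. You write that ``a single point is a polar set for the comparison principle in this setting, which holds since bounded \p-harmonic functions extend across sets of zero capacity,'' but nothing in the hypotheses rules out $\Cp(\{x_0\})>0$ (take $X=\R$, $\Om=(-1,1)$, $x_0=0$). In that case $x_0$ is \emph{not} polar, removability fails, and your comparison argument on $G\setm\{x_0\}$ breaks down: to apply comparison there you would need $u(x_0)\ge h(x_0)$, which is part of what you are trying to prove. The paper handles the two cases $\Cp(\{x_0\})=0$ and $\Cp(\{x_0\})>0$ separately (Theorems~\ref{thm-sing-char} and~\ref{thm-sing-char>0}). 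When $\Cp(\{x_0\})=0$, your idea is essentially correct and Lemma~\ref{lem-extension} gives the superharmonic extension directly. When $\Cp(\{x_0\})>0$, the paper takes a completely different route: \ref{dd-Np0} together with Lemma~\ref{lem-bdd-u-imp-Np1} forces $u$ to be bounded with $u\to0$ q.e.\ on $\bdy\Om$; Lemma~\ref{lem-lim-in-R} gives $u(x_0)<\infty$; and then the uniqueness in Theorem~\ref{thm-Perron-fund}\,\ref{P-C-uniq} (via Lemma~\ref{lem-char-cap-pot}) identifies $u$ as $u(x_0)$ times the lsc-regularized capacitary potential for $\{x_0\}$ in $\Om$, which is known to be superharmonic. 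So \ref{dd-s} is obtained \emph{indirectly}, through a uniqueness result, not through a removability or comparison argument.

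There are smaller issues as well. Your argument for (b) $\Rightarrow$ (c) is too vague: the existence of the genuine limit $\lim_{x\to x_0}u(x)$ is Proposition~\ref{prop-punctured-ball} and uses weak Harnack inequalities nontrivially (lsc-regularization alone only gives $\essliminf$). Your argument for \ref{dd-sup} via the strong maximum principle on $\Om\setm\{x_0\}$ again stumbles when $\Cp(\{x_0\})>0$, since $\Om\setm\{x_0\}$ need not be connected (same $\R$ example). The paper instead shows $u(x_0)=\infty$ when $\Cp(\{x_0\})=0$ (Lemma~\ref{lem-existence-conseq=0}), and uses $u=\Hpind{\Om\setm\{x_0\}}(u(x_0)\chi_{\{x_0\}})\le u(x_0)$ when $\Cp(\{x_0\})>0$. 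For \ref{dd-inf}, the clean route is that \ref{dd-Np0} implies \eqref{eq-qe-cond-bdy} via Lemma~\ref{lem-bdd-u-imp-Np1}, and since $\Cp(\bdy\Om)>0$ there exist boundary points where $u\to0$.
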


The outline of the paper is as follows. 
We begin in Section~\ref{sect-preli} by recalling the basic definitions
related to the analysis on metric spaces. 
In Section~\ref{sect-cap-pot} we establish sharp
superlevel set formulas for capacitary potentials.
Such a formula 
was obtained in weighted $\R^n$ (with a \p-admissible weight) in 
Heinonen--Kilpel\"ainen--Martio~\cite[p.~118]{HeKiMa}.
Their argument depends on the Euler--Lagrange equation,
which is not available in the metric space setting considered here.
Nevertheless,  we are able to obtain this formula
with virtually no assumptions on the metric space nor on the sets
involved,
and at the same time the proof is
considerably shorter than the one in \cite[pp.\ 116--118]{HeKiMa}.
See Section~\ref{sect-cap-pot} for more details.

Section~\ref{sect-harm} contains a discussion about
(super)harmonic functions in the metric setting,
while in Section~\ref{sect-perron} we obtain,
with the help of harmonic extensions and Perron solutions, some finer
properties for these functions and, in particular,
for capacitary potentials. 

The actual study of singular and Green functions begins in Section~\ref{sect-singular},
where we record some easy observations concerning singular functions. 
Sections~\ref{case-Cp(x0)=0} and~\ref{case-Cp(x0)>0}
contain proofs for the existence and further properties
of singular functions under the respective assumptions
that $\Cp(\{x_0\})=0$ or $\Cp(\{x_0\})>0$.
Then, in Section~\ref{sect-Green}, we establish a sharp
superlevel set property for superharmonic functions and show how
this property yields the existence of Green functions.
In Section~\ref{sect-pole} we study
the growth behaviour of \p-harmonic functions with poles.
Local assumptions 
are
discussed in Section~\ref{sect-local-assumptions},
and in Section~\ref{sect-HS} we compare our definitions
and results with those in 
Holopainen--Shan\-mu\-ga\-lin\-gam~\cite{HoSha}.

By the theory of Cheeger~\cite{Cheeg}, it is possible to
use also a PDE approach to the study of singular and Green functions 
in metric spaces satisfying the
standard assumptions. In Section~\ref{sect-Cheeger} we show that   
in this setting the Cheeger--Green functions,
based on 
Definition~\ref{deff-Green-intro}, actually satisfy
an equation corresponding to~\eqref{eq-Green-Rn} and hence
the situation is analogous to that in (weighted) $\R^n$.
Note, however, that 
Cheeger \p-(super)harmonic functions,
and thus also the corresponding singular and Green functions,
differ in general 
from those defined by means of upper gradients. 

\begin{ack}
A.B. and J.B.  were supported by the Swedish Research Council,
grants 2016-03424 and 621-2014-3974, respectively.
J.L. was supported by 
the Academy of Finland, grant 252108.
\end{ack}

\section{Preliminaries}\label{sect-preli}

We assume throughout the paper that $1 < p<\infty$ 
and that $X=(X,d,\mu)$ is a metric space equipped
with a metric $d$ and a positive complete  Borel  measure $\mu$ 
such that $0<\mu(B)<\infty$ for all balls $B \subset X$. 
The $\sigma$-algebra on which $\mu$ is defined
is obtained by the completion of the Borel $\sigma$-algebra.
It follows that $X$ is separable.
To avoid pathological situations we assume that $X$ contains
at least two points.

Next we are going to introduce the
necessary background on Sobolev spaces and capacities in metric spaces.
Proofs of most of the results mentioned in this section
can be found in the monographs
Bj\"orn--Bj\"orn~\cite{BBbook} and
Heinonen--Koskela--Shanmugalingam--Tyson~\cite{HKSTbook}.

A \emph{curve} is a continuous mapping from an interval,
and a \emph{rectifiable} curve is a curve with finite length.
We will only consider curves which are nonconstant, compact
and 
rectifiable, and thus each curve can 
be parameterized by its arc length $ds$. 
A property is said to hold for \emph{\p-almost every curve}
if it fails only for a curve family $\Ga$ with zero \p-modulus, 
i.e.\ there exists $0\le\rho\in L^p(X)$ such that 
$\int_\ga \rho\,ds=\infty$ for every curve $\ga\in\Ga$.

We begin with the notion of \p-weak upper gradients 
as defined by 
Koskela--MacManus~\cite{KoMc}, 
see also Heinonen--Koskela~\cite{HeKo98}.

\begin{deff} \label{deff-ug}
A measurable function $g\colon X \to [0,\infty]$ is a \emph{\p-weak upper gradient}
of $f\colon X \to [-\infty,\infty]$ if for \p-almost every curve
$\gamma\colon  [0,l_{\gamma}] \to X$,
\[ 
         |f(\gamma(0)) - f(\gamma(l_{\gamma}))| \le \int_{\gamma} g\,ds,
\]
where we follow the convention that the left-hand side is $\infty$ 
whenever at least one of the 
terms therein is $\pm \infty$.
\end{deff}

If $f$ has an upper gradient in $\Lploc(X)$, then
it has an a.e.\ unique \emph{minimal \p-weak upper gradient} $g_f \in \Lploc(X)$
in the sense that for every \p-weak upper gradient $g \in \Lploc(X)$ of $f$ we have
$g_f \le g$ a.e.  
Following Shanmugalingam~\cite{Sh-rev}, 
we define a version of Sobolev spaces on the metric space $X$.

\begin{deff} \label{deff-Np}
For a measurable function $f\colon X\to [-\infty,\infty]$, let 
\[
        \|f\|_{\Np(X)} = \biggl( \int_X |f|^p \, d\mu 
                + \inf_g  \int_X g^p \, d\mu \biggr)^{1/p},
\]
where the infimum is taken over all \p-weak upper gradients of $f$.
The \emph{Newtonian space} on $X$ is 
\[
        \Np (X) = \{f: \|f\|_{\Np(X)} <\infty \}.
\]
\end{deff}
\medskip

The space $\Np(X)/{\sim}$, where  $f \sim h$ if and only if $\|f-h\|_{\Np(X)}=0$,
is a Banach space and a lattice.
In this paper we assume that functions in $\Np(X)$ are defined everywhere,
not just up to an equivalence class in the corresponding function space.
This is needed for the definition of \p-weak upper gradients to make sense.
For a measurable set $A\subset X$, the Newtonian space $\Np(A)$ is defined by
considering $(A,d|_A,\mu|_A)$ as a metric space in its own right. 
If $f,h \in \Nploc(X)$, then $g_f=g_h$ a.e.\ in $\{x \in X : f(x)=h(x)\}$.
In particular, $g_{\min\{f,c\}}=g_f \chi_{\{f < c\}}$ for any $c \in \R$.

\begin{deff}
The \emph{Sobolev capacity} of an arbitrary set $E\subset X$ is
\[
\Cp(E) =\inf_u\|u\|_{\Np(X)}^p,
\]
where the infimum is taken over all $u \in \Np(X)$ such that
$u\geq 1$ on $E$.
We say that a property holds \emph{quasieverywhere} (q.e.)\ 
if the set of points  for which it fails has Sobolev capacity zero.
\end{deff}

The capacity is the correct gauge 
for distinguishing between two Newtonian functions. 
If $u \in \Np(X)$, then $u \sim v$ if and only if $u=v$ q.e.
Moreover, 
if $u,v \in \Nploc(X)$ and $u= v$ a.e., then $u=v$ q.e.
Both the Sobolev and the variational capacity 
(defined below in Definition~\ref{deff-cp})
are countably subadditive.

\begin{deff}
For measurable sets $E \subset A \subset X$, let
\[
\Np_0(E;A)=\{f|_{E} : f \in \Np(A) \text{ and }
        f=0 \text{ on } A \setm E\}.
\]
If $A=X$, we omit $X$ in the notation and write $\Np_0(E)$.
Whenever convenient, we regard functions in $\Np_0(E;A)$ as extended
by zero to $A\setm E$.
\end{deff}

The measure
$\mu$ is \emph{doubling} if  there is a constant $C>0$ such that 
for all balls $B(x,r)=\{y \in X : d(x,y)<r\}$,
we have
\begin{equation}\label{eq:doubling-x0}
  \mu(B(x,2r))\le C \mu(B(x,r)).
\end{equation}

The space $X$ (or the measure $\mu$) supports a \emph{\p-Poincar\'e inequality} if
there exist constants $C>0$ and $\lambda \ge 1$
such that for all balls $B=B(x,r)\subset X$, 
all integrable functions $u$ on $X$, and all \p-weak upper gradients $g$ of 
$u$, 
\begin{equation}  \label{eq-deff-PI}
        \vint_{B} |u-u_B| \,d\mu
        \le C r \biggl( \vint_{\lambda B} g^{p} \,d\mu \biggr)^{1/p},
\end{equation}
where $u_B :=\vint_B u \,d\mu 
:= \int_B u\, d\mu/\mu(B)$ is the integral average
and $\la B$ stands for the dilated ball $B(x,\la r)$.

If $X$ is complete and $\mu$ is a doubling measure supporting
a \p-Poincar\'e inequality, then 
functions in $\Np(X)$ and those in $\Np(\Om)$, for open 
$\Om \subset X$, are \emph{quasicontinuous}. 
This will be important in Theorem~\ref{thm-Perron-fund},
but 
affects also how we formulate various statements,
such as the definition of the Sobolev capacity above.

If $X=\R^n$ is equipped with $d\mu=w\,dx$, then 
$w\ge0$ is a \p-admissible weight
in the sense of 
Heinonen--Kilpel\"ainen--Martio~\cite{HeKiMa}
if and only if $\mu$ is a doubling measure which
supports a \p-Poincar\'e inequality,
see
Corollary~20.9 in~\cite{HeKiMa} (which is only in the second edition)
and Proposition~A.17 in~\cite{BBbook}.
In this case,
$\Np(\R^n)$ and $\Np(\Om)$ are the 
refined Sobolev spaces 
defined in~\cite[p.~96]{HeKiMa},
and moreover our Sobolev and variational capacities
coincide with those in \cite{HeKiMa};
see Bj\"orn--Bj\"orn~\cite[Theorem~6.7\,(ix) and Appendix~A.2]{BBbook} and
Bj\"orn--Bj\"orn~\cite[Theorem~5.1]{BBvarcap}.
The situation is similar
on Riemannian  manifolds and Carnot--Carath\'eodory spaces
equipped with their natural measures; 
see Haj\l asz--Koskela~\cite[Sections~10 and~11]{HaKo2000}
and Section~\ref{sect-Cheeger} below for further details.

Throughout the paper, we write $Y \simle Z$ if there is an implicit
constant $C>0$ such that $Y \le CZ$.
We also write $Y \simge Z$ if $Z \simle Y$,
and $Y \simeq Z$ if $Y \simle Z \simle Y$.
Unless otherwise stated, we always allow the implicit comparison constants
to depend on the standard parameters, such as $p$, the doubling constant 
and the constants in the Poincar\'e inequality.

\section{Superlevel identities for capacitary potentials}
\label{sect-cap-pot}

\begin{deff} \label{deff-cp}
If 
$E \subset A$ are bounded subsets of $X$, then 
the \emph{variational capacity} of $E$ with respect to $A$ is
\begin{equation} \label{eq-cp-def}
\cp(E,A) = \inf_u\int_{X} g_u^p\, d\mu,
\end{equation}
where the infimum is taken over all $u \in \Np(X)$
such that $u\geq 1$ on $E$ and $u=0$ 
on  $X  \setm A$.
If no such function $u$ exists then $\cp(E,A)=\infty$.
\end{deff}

One can equivalently take the above infimum  over all 
$u \in \Np(X)$ such that
$u\geq 1$ q.e.\ on $E$ and $u=0$ q.e.\ on $X \setm A$;
we call such $u$ \emph{admissible} for the capacity
$\cp(E,A)$.

Since $A$ is not required to be measurable we cannot take
the integral in \eqref{eq-cp-def} over $A$, and
it is also important that the minimal \p-weak upper gradient
of $u$ is taken with respect to $X$. 
However, if $A$ is open then the integral 
and the minimal \p-weak upper gradient can equivalently
be taken over $A$.

\begin{deff} \label{deff-cap-pot}
Let $E\subset A$ be bounded subsets of $X$.
A \emph{capacitary potential} for the condenser $(E,A)$ is a minimizer 
for \eqref{eq-cp-def}, i.e.\
an admissible function 
realizing this infimum.
\end{deff}

Provided that $\cp(E,A)<\infty$, there is always
a minimizer $u$, i.e.\ a capacitary potential, by
Theorem~5.13 in Bj\"orn--Bj\"orn~\cite{BBnonopen};
this fact holds with no assumptions on the space.
If $\cp(E,A)= \infty$, there is no admissible function and hence
there cannot be any capacitary potential. 
Note that if $\dist(E,X \setm A)>0$, then $\cp(E,A)<\infty$.
Since $u$ is a minimizer, we have
\begin{equation}\label{eq-cap-pot}
\int_{X} g_{u}^p\,d\mu = \cp(E,A).
\end{equation}
Under rather mild assumptions, capacitary potentials
are unique up to sets of Sobolev capacity zero, see 
\cite[Theorem~5.13]{BBnonopen}.
For more about capacitary potentials, see also 
Lemmas~\ref{lem-representation-cap-pot} and~\ref{lem-char-cap-pot}
below and the comment preceding them.

One of the crucial ingredients in our estimates for 
Green functions is the
following capacity formula for superlevel sets of capacitary potentials.

\begin{thm} \label{thm-cap-pot}
Assume that $E \subset A$ are bounded
sets 
such that $\cp(E,A)<\infty$ and let 
$u$ be a capacitary potential of $(E,A)$.
Let $A_a=\{x\in A:u(x)>a\}$ and $A^a=\{x\in A:u(x)\ge a\}$.
Then
\begin{alignat*}{2}
\cp(A^b,A_a) &=\cp(A^b,A^a) =(b-a)^{1-p} \cp(E,A),  
   & \quad & \text{if } 0 \le a < b  \le 1, \\
\cp(A_b,A_a) &=\cp(A_b,A^a) =(b-a)^{1-p} \cp(E,A),  
& \quad & \text{if } 0 \le a < b  < 1. 
\end{alignat*}
\end{thm}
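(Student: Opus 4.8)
The plan is to prove the identity $\cp(A^b,A_a)=(b-a)^{1-p}\cp(E,A)$ by producing mutually inverse estimates, and then to deduce the three companion identities (with $A^b$ replaced by $A_b$, and $A_a$ by $A^a$) from a squeezing argument using the monotonicity of $\cp$ in its arguments. For the principal identity I would first establish the ``$\le$'' direction: from a capacitary potential $u$ of $(E,A)$, the truncated and rescaled function
\[
v=\frac{\min\{u,b\}-\min\{u,a\}}{b-a}
\]
is admissible for $\cp(A^b,A_a)$ (it equals $1$ on $A^b$, vanishes outside $A_a$, and lies in $\Np(X)$), and by the locality of minimal \p-weak upper gradients mentioned after Definition~\ref{deff-Np}, its minimal \p-weak upper gradient is $g_u\chi_{\{a<u<b\}}/(b-a)$ a.e. Hence $\int_X g_v^p\,d\mu=(b-a)^{-p}\int_{\{a<u<b\}}g_u^p\,d\mu\le(b-a)^{1-p}\cp(E,A)$ once we know that $\int_{\{a<u<b\}}g_u^p\,d\mu\le(b-a)\cp(E,A)$; this last inequality is itself the interesting point and is exactly the place where I expect the real work to be.

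For that missing piece — and simultaneously for the reverse inequality ``$\ge$'' — I would run the standard layer-cake/Cavalieri argument for capacitary potentials. Fix $0\le a<b\le 1$ and test the minimality of $u$ for $(E,A)$ against the competitor obtained by flattening $u$ on the slab $\{a<u<b\}$: replace $u$ by $u$ outside the slab and by the rescaled capacitary potential of the condenser $(A^b,A_a)$ inside, scaled to run between $a$ and $b$. Comparing energies gives $\int_{\{a<u<b\}}g_u^p\,d\mu\le(b-a)^p\cp(A^b,A_a)$, while the truncation argument of the previous paragraph gives the opposite chain; together with the obvious additivity of the energy of $u$ across the finitely many slabs (and telescoping over a partition $0=t_0<t_1<\dots<t_N=1$) one gets $\sum_j(t_{j+1}-t_j)^{1-p}\cdot(\text{something})$, forcing equality in each slab and yielding both $\int_{\{a<u<b\}}g_u^p\,d\mu=(b-a)\cp(E,A)$ and $\cp(A^b,A_a)=(b-a)^{1-p}\cp(E,A)$. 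Throughout I would lean on the fact, recorded after Definition~\ref{deff-cp}, that capacitary potentials exist with no assumptions on the space and that admissibility may be checked q.e., so no doubling or Poincaré hypotheses enter here.

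The final step is to pass between the four versions. Since $A^b\subset A_a\subset A_b\subset A^a$ whenever $a<b$ (at least after noting $A_b\subset A^b$ is false — rather $A^b\subset A_b$ fails too, so care is needed: in fact $A^b\subset A_a$ and $A_b\subset A^b$, and $A_b\subset A_a$, $A^b\subset A^a$), monotonicity of the variational capacity gives
\[
\cp(A^b,A^a)\le\cp(A^b,A_a)\quad\text{and}\quad \cp(A_b,A_a)\le\cp(A^b,A_a),
\]
and similarly the reverse containments yield the opposite inequalities once we interpolate through an auxiliary level $a<a'<b'<b$ and use the already-proved identity at levels $a'$, $b'$, letting $a'\downarrow a$ and $b'\uparrow b$. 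This is where the hypothesis $b<1$ in the second line is used, to keep $b'$ admissible as a genuine superlevel of $u$; when $b=1$ one only gets the first line, matching the statement. I would expect the slab-flattening energy comparison to be the main obstacle — everything else is truncation bookkeeping and monotonicity — and in particular one must be slightly careful that the competitor built from the inner capacitary potential is genuinely in $\Np(X)$ and agrees with $u$ q.e.\ on the slab boundary, which follows from the q.e.\ characterization of admissible functions.
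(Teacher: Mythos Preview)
Your two-sided estimate is correct and gives exactly
\[
\int_{\{a<u<b\}} g_u^p\,d\mu = (b-a)^p\,\cp(A^b,A_a),
\]
but this, together with additivity of the left-hand side over a partition $0=t_0<\dots<t_N=1$, does \emph{not} force $\int_{\{a<u<b\}} g_u^p\,d\mu = (b-a)\,\cp(E,A)$. All you obtain is $\cp(E,A)=\sum_j (t_{j+1}-t_j)^p c_j$ with $c_j=\cp(A^{t_{j+1}},A_{t_j})$, and there is no telescoping or Cavalieri identity that singles out $c_j=(t_{j+1}-t_j)^{1-p}\cp(E,A)$ from this alone; many sequences $(c_j)$ satisfy such a constraint. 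The missing step is an \emph{optimization} argument: one must vary a parameter in a family of admissible competitors and use that the minimum occurs at a known point to extract a first-order condition. This is precisely what the paper does---it writes $u=au_1+(1-a)u_2$ with $u_1=\min\{u/a,1\}$ and $u_2=(u-au_1)/(1-a)$ having disjoint gradient supports, observes that $t\mapsto t^pI_1+(1-t)^pI_2$ (with $I_j=\int g_{u_j}^p\,d\mu$) dominates $\cp(E,A)$ with equality at $t=a$, and differentiates to get $a^{p-1}I_1=(1-a)^{p-1}I_2$, which then yields $I_1=a^{1-p}\cp(E,A)$. Your slab-flattening is essentially the ``equality at $t=a$'' half of this; you are missing the differentiation half.

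Two smaller points: your containment bookkeeping in the final paragraph is tangled (the correct chain is $A_b\subset A^b\subset A_a\subset A^a$), and the limits should go the other way---to bound $\cp(A_b,A^a)$ from below you need $A^{b'}\subset A_b$ and $A^a\subset A_{a'}$, which requires $b'>b$ and $a'<a$, so $b'\downarrow b$ and $a'\uparrow a$. This is why the restriction $b<1$ appears where it does, but your stated limits $a'\downarrow a$, $b'\uparrow b$ give upper bounds, not lower ones.
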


We reduce the proof of Theorem~\ref{thm-cap-pot} to the following
special cases.

\begin{lem}  \label{lem-pot-est}
Assume that $E \subset A$ are bounded
sets 
such that $\cp(E,A)<\infty$ and let 
$u$ be a capacitary potential of $(E,A)$.
Let $A_a=\{x\in A:u(x)>a\}$ and $A^a=\{x\in A:u(x)\ge a\}$.
Then 
\begin{alignat}{2}
\cp(A^a,A) & = a^{1-p} \cp(E,A),  & \quad & \text{if } 0 <a \le 1, 
\label{eq-A^a,A} \\
\cp(A_a,A) & = a^{1-p} \cp(E,A),  & \quad & \text{if } 0 <a < 1, 
\label{eq-A_a,A} \\
\cp(E \cap A^a,A^a) &= (1-a)^{1-p} \cp(E,A), & \quad & \text{if } 0 \le a < 1, 
\label{eq-E,A^a} \\
\cp(E \cap A_a,A_a) &= (1-a)^{1-p} \cp(E,A), & \quad & \text{if } 0 \le  a < 1.
\label{eq-E,A_a} 
\end{alignat}
Moreover, $u_1=\min\{u/a,1\}$ 
is a capacitary potential of both 
$(A^a,A)$ and $(A_a,A)$, while
$u_2=(u-au_1)/(1-a)$
is a  capacitary potential of 
$(E \cap A^a,A^a)$ and $(E \cap A_a,A_a)$, under the same conditions on $a$
as in \eqref{eq-A^a,A}--\eqref{eq-E,A_a}.
\end{lem}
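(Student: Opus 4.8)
The plan is to prove the identities by exhibiting explicit admissible functions and showing they are capacitary potentials; the key observation is that all the superlevel-set condensers in question are "slices" of the original condenser $(E,A)$, and the potential $u$ restricts to (a rescaling of) the potential on each slice. First I would record the basic slicing facts about minimal $p$-weak upper gradients that make everything work: if $u\in\Nploc(X)$ and $c\in\R$, then $g_{\min\{u,c\}}=g_u\chi_{\{u<c\}}$ a.e.\ and $g_{\max\{u,c\}}=g_u\chi_{\{u>c\}}$ a.e., as recalled in Section~\ref{sect-preli}. From these, for $u_1=\min\{u/a,1\}$ one gets $g_{u_1}=g_u\chi_{\{u<a\}}/a$ a.e., and for the remainder $v:=u-au_1=\max\{u-a,0\}$ one gets $g_v=g_u\chi_{\{u>a\}}$ a.e., so $g_{u_2}=g_u\chi_{\{u>a\}}/(1-a)$ a.e.; moreover $g_u=a g_{u_1}+(1-a)g_{u_2}$ a.e.\ with the two terms having essentially disjoint supports, whence $\int g_u^p=a^p\int g_{u_1}^p+(1-a)^p\int g_{u_2}^p$. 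Since $u\in\Np(X)$ and $X\setm A$ has the appropriate vanishing, $u_1,u_2$ lie in $\Np(X)$ as well.

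Next I would check admissibility. For $\cp(A^a,A)$ and $\cp(A_a,A)$: $u_1\ge1$ on $\{u\ge a\}\supset\{u>a\}$ and $u_1=0$ on $X\setm A$ (since $u=0$ there and $0<a$), so $u_1$ is admissible for both, giving $\cp(A_a,A)\le\cp(A^a,A)\le\int g_{u_1}^p=a^{1-p}\int g_u^p\cdot a^{p-1}/a^{p-1}$ — more precisely $\int g_{u_1}^p=a^{-p}\int_{\{u<a\}}g_u^p\le a^{-p}\cp(E,A)$. Wait, that gives an upper bound with the wrong power unless $g_u$ is supported in $\{u<a\}$ up to null sets, which it is: $u$ is a capacitary potential, so by the characterization of capacitary potentials (Lemma~\ref{lem-char-cap-pot}, referenced in the excerpt) $g_u=0$ a.e.\ on $\{u\ge1\}$ and in fact one expects $g_u=0$ a.e.\ on $\{u=a\}$ for the relevant $a$; the cleaner route is that $u$ minimizes, so any competitor $w$ with $w=u$ on $\{u<a\}$ and $w=u_1\cdot a=a$ on $\{u\ge a\}$ (i.e.\ $w=\min\{u,a\}$) has $\int g_w^p\ge\int g_u^p$, forcing $g_u\chi_{\{u>a\}}=0$ a.e.\ is false — rather one compares using that $\min\{u,a\}/a$... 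Let me restructure: the sharp identities should be proved together by a two-sided argument. For the upper bounds use the explicit competitors $u_1,u_2$ above. For the lower bounds, given any $w$ admissible for $\cp(A^a,A)$, the function $a w$ (truncated suitably) combined with $au_1+(1-a)u_2$-type gluing produces a competitor for $\cp(E,A)$: specifically if $w$ is admissible for $(A^a,A)$ and $w_2$ is admissible for $(E\cap A^a,A^a)$, then $aw+(1-a)w_2\chi_{A^a}$ is... one checks it is admissible for $(E,A)$ and its energy is bounded by $a^p\int g_w^p+(1-a)^p\int g_{w_2}^p$ via the disjoint-support trick again. Taking infima and combining the resulting inequalities in the two unknowns $\cp(A^a,A)$ and $\cp(E\cap A^a,A^a)$ against $\cp(E,A)$, together with the matching upper bounds from $u_1,u_2$, pins down all four quantities. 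The analogous statements for $A_a$ follow identically, noting $\cp(A_a,\cdot)\le\cp(A^a,\cdot)$ always and that the explicit potentials are the same functions; equality of the $A_a$ and $A^a$ versions then comes out of the sharp identities since $\{u=a\}$ contributes zero capacity.

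Finally, the claim that $u_1$ and $u_2$ are themselves capacitary potentials follows from uniqueness (up to $\Cp$-null sets, \cite[Theorem~5.13]{BBnonopen}) once we know they are admissible and realize the respective infima — which is exactly what the energy computation $\int g_{u_1}^p=a^{-p}\int_{\{u<a\}}g_u^p=a^{1-p}\cp(E,A)$ shows, provided the identities \eqref{eq-A^a,A}--\eqref{eq-E,A_a} hold. So the logical order is: (i) slicing identities for $g_{u_1},g_{u_2}$; (ii) admissibility; (iii) upper bounds from these explicit competitors; (iv) lower bounds by gluing arbitrary competitors; (v) combine to get the four identities; (vi) conclude $u_1,u_2$ are potentials. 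The main obstacle I expect is step (iv): getting the gluing of an admissible $w$ for the outer condenser with an admissible $w_2$ for the inner one to be genuinely admissible for $(E,A)$ (the boundary matching at $\{u=a\}$, and checking the glued function lies in $\Np(X)$ with the claimed upper gradient) requires some care, and one must make sure the disjoint-support orthogonality $\int(\alpha g_1+\beta g_2)^p=\alpha^p\int g_1^p+\beta^p\int g_2^p$ is applied only when the supports really are essentially disjoint — which holds because $g_1$ can be taken supported in $\{u<a\}$ and $g_2$ in $\{u\ge a\}$ after replacing $w,w_2$ by suitable truncations/minima that do not increase energy.
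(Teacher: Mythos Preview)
Your gluing in step (iv) actually works, and more easily than you fear: after truncating an admissible $w$ for $(A^a,A)$ to $\min\{w,1\}$, the upper gradient is supported in $\{w<1\}$, which lies a.e.\ in $X\setminus A^a=\{u<a\}$ since $w\ge1$ q.e.\ on $A^a$; and any admissible $w_2$ for $(E\cap A^a,A^a)$ already has $g_{w_2}=0$ a.e.\ on $\{w_2=0\}\supset X\setminus A^a$, so $g_{w_2}$ lives in $\{u\ge a\}$. The supports are therefore disjoint and the orthogonality goes through.

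The genuine gap is step (v). Your argument yields $C_1:=\cp(A^a,A)\le I_1:=\int g_{u_1}^p$, $C_2:=\cp(E\cap A^a,A^a)\le I_2:=\int g_{u_2}^p$, the gluing lower bound $\cp(E,A)\le a^p C_1+(1-a)^p C_2$, and the decomposition $\cp(E,A)=a^p I_1+(1-a)^p I_2$. Combining these forces $C_1=I_1$ and $C_2=I_2$, so $u_1,u_2$ are capacitary potentials, but you are left with \emph{one} equation $a^p I_1+(1-a)^p I_2=\cp(E,A)$ in \emph{two} unknowns. Nothing you have written pins down $I_1=a^{1-p}\cp(E,A)$ individually.

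The missing relation is $a^{p-1}I_1=(1-a)^{p-1}I_2$, and the paper obtains it by a one-parameter variational trick that also makes your gluing superfluous. For every $t\in[0,1]$ the function $tu_1+(1-t)u_2$ is admissible for $\cp(E,A)$ (both $u_1$ and $u_2$ equal $1$ q.e.\ on $E$), and by the disjoint-support identity its energy is $t^pI_1+(1-t)^pI_2$, with equality to $\cp(E,A)$ at $t=a$. Hence $t=a$ minimises this strictly convex function of $t$; differentiating gives the second equation, and a short contradiction argument (replace $u_1$ by a better potential and improve at $t=a$) shows directly that $u_1,u_2$ realise the respective capacities. So the paper's proof extracts everything from the single family $t\mapsto tu_1+(1-t)u_2$, without ever gluing arbitrary competitors.
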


The first identity \eqref{eq-A^a,A}
was obtained for open $A$ in weighted $\R^n$ (with a \p-admissible weight) in 
Heinonen--Kilpel\"ainen--Martio~\cite[p.~118]{HeKiMa}.
Their argument depends on the Euler--Lagrange equation,
which is not available in the metric space setting considered here.
Nevertheless, the weaker estimate 
\[
\cp(A^a,A) \simeq a^{1-p} \cp(E,A)
\]
was obtained for open $A$ in  metric spaces 
in Bj\"orn--MacManus--Shanmugalingam~\cite[Lemma~5.4]{BMS}
using  a variational approach.
Our proof is also based on the variational method, 
and still yields the exact identity in the metric space 
setting, with virtually no assumptions whatsoever on the metric space,
but is at the same time 
shorter than the proofs in \cite[pp.\ 116--118]{HeKiMa} and \cite{BMS}.

For open $A$ in
complete metric spaces equipped with a doubling measure supporting
a \p-Poincar\'e inequality,
the 
identities \eqref{eq-A^a,A} and \eqref{eq-A_a,A}
were recently obtained
in Aikawa--Bj\"orn--Bj\"orn--Shan\-mu\-ga\-lin\-gam~\cite{ABBSdich}
using similar ideas as here.

\begin{proof}[Proof of Lemma~\ref{lem-pot-est}]
The identities for $a=0$ and $a=1$ are rather immediate,
so assume that $0<a<1$.

Note that both $u_1=1$ and $u_2=1$ q.e.\ on $E$.
It follows that for each $t\in[0,1]$, 
the function $tu_1+(1-t)u_2$
is admissible in the definition of $\cp(E,A)$.
Since for a.e.\ $x\in X$, either $g_{u_1}=0$ or $g_{u_2}=0$,
we obtain (using also \eqref{eq-cap-pot}) that
\begin{equation} \label{eq-cp-fund-ineq}
\cp(E,A) =
\int_{X} g_u^p \,d\mu
     \le t^p \int_{X} g_{u_1}^p\,d\mu
                 + (1-t)^p \int_{X} g_{u_2}^p\,d\mu,
\end{equation}
with equality for $t=a$.
Denote the above integrals by $I$, $I_1$ and $I_2$, respectively.

If $u_1$ were not
a capacitary potential of $(A^a,A)$,
then we could replace $u_1$ by a capacitary potential $v$ of $(A^a,A)$
on the right-hand side above.
This would yield a strictly
smaller right-hand side when $t=a$, contradicting
the fact that we have equality throughout with $u_1$ on
the right-hand side when $t=a$.
Hence $u_1$ is a capacitary potential of $(A^a,A)$
and $I_1 = \cp(A^a,A)$.
Similarly,
$u_2$ is a capacitary potential of $(E \cap A_a,A_a)$ and  
$I_2 = \cp(E \cap A_a,A_a)$.

Next, we rewrite 
\eqref{eq-cp-fund-ineq} and the equality in it as
\begin{equation}
I \le t^p I_1 + (1-t)^p I_2
 \quad \text{and} \quad
I = a^p I_1 + (1-a)^p I_2.
         \label{estIwithI1I2}
\end{equation}
In particular, $t\mapsto t^p I_1 + (1-t)^p I_2$
attains its minimum for $t=a$.
Differentiating with respect to $t$ and letting $t=a$ we thus obtain
that $a^{p-1} I_1 = (1-a)^{p-1} I_2$.
Inserting this and $t=a$ into~\eqref{estIwithI1I2} yields
\begin{align*}
I &= a^p I_1 + a^{p-1}(1-a) I_1 = a^{p-1}I_1, \\ 
I &= a(1-a)^{p-1} I_2 + (1-a)^p I_2 = (1-a)^{p-1} I_2,
\end{align*}
proving 
\eqref{eq-A^a,A} and \eqref{eq-E,A_a}.

As $u=1$ q.e.\ on $E$, we see that
\begin{align*}
  \cp(E \cap A_a,A_a) 
  & \ge \cp(E \cap A_a,A^a) 
  =   \cp(E \cap A^a,A^a) \\
  & \ge \lim_{\eps \to 0\limplus} \cp(E \cap A^a,A_{a-\eps})
    =  \lim_{\eps \to 0\limplus} \cp(E \cap A_{a-\eps},A_{a-\eps}), 
\end{align*}
which together with \eqref{eq-E,A_a} shows
that \eqref{eq-E,A^a} holds.
The proof of \eqref{eq-A_a,A} is similar to the proof
of \eqref{eq-E,A^a}.
It also follows that $u_1$ and $u_2$ are capacitary potentials
of $(A_a,A)$ and 
$(E \cap A^a,A^a)$, respectively.
\end{proof}

\begin{proof}[Proof of Theorem~\ref{thm-cap-pot}]
We prove 
the identity for $\cp(A^b,A_a)$;
the other identities are shown similarly.
By Lemma~\ref{lem-pot-est}, $u_1=\min\{u/b,1\}$ 
is a capacitary potential  of $(A^b,A)$.
Since $u > a$ if and only if $u_1 > a/b$,
we get using first \eqref{eq-E,A_a}, with $E$ 
replaced by $A^b$, 
and then \eqref{eq-A^a,A} that
\begin{align*}
  \cp(A^b,A_a) 
  & = \Bigl(1-\frac{a}{b}\Bigr)^{1-p} \cp(A^b,A)\\
  & = \Bigl(1-\frac{a}{b}\Bigr)^{1-p} b^{1-p} \cp(E,A)
  =(b-a)^{1-p} \cp(E,A).
\qedhere
\end{align*}
\end{proof}

\section{\texorpdfstring{\p}{p}-harmonic and superharmonic functions}
\label{sect-harm}

\emph{From now on, but for 
Sections~\ref{sect-pole}--\ref{sect-HS}, 
 we assume that $X$ is complete, 
$\mu$ is doubling and supports a \p-Poincar\'e inequality,
$\Om\subset X$ is a nonempty open set,
and 
$x_0 \in \Om$ is a fixed point.
We also write
$B_r=B(x_0,r)$ for $r>0$.
As always in this paper, $1<p<\infty$.}

\medskip

Since $X$ is complete and $\mu$ is doubling, $X$ is also
proper,
i.e.\ bounded closed sets are compact.
It moreover
follows from the assumptions that
$X$ is quasiconvex (see e.g.\ \cite[Theorem~4.32]{BBbook}),
and thus connected and locally connected.
These facts will be important to keep in mind.
By Keith--Zhong~\cite[Theorem~1.0.1]{KeZh}, 
$X$ supports a $q$-Poincar\'e inequality for some $q<p$. 
This is assumed explicitly in some of the papers we refer to below.

In this section we recall the definitions of
\p-harmonic and superharmonic functions
and present some of their important properties that
will be needed later.
For proofs of the facts not proven in this section,
we refer to the monograph Bj\"orn--Bj\"orn~\cite{BBbook}.
The following definition of (super)minimizers is one of several 
equivalent versions 
in the literature,
cf.\ Bj\"orn~\cite[Proposition~3.2 and Remark~3.3]{ABkellogg}.

\begin{deff} \label{def-quasimin}
A function $u \in \Nploc(\Om)$ is a
\emph{\textup{(}super\/\textup{)}minimizer} in $\Om$
if 
\[ 
      \int_{\phi \ne 0} g^p_u \, d\mu
           \le \int_{\phi \ne 0} g_{u+\phi}^p \, d\mu
           \quad \text{for all (nonnegative) } \phi \in \Np_0(\Om).
\] 
A \emph{\p-harmonic function} is a continuous minimizer
(by which we mean real-valued continuous in this paper).
\end{deff}

It was shown in Kinnunen--Shanmugalingam~\cite{KiSh01} that 
under our standing assumptions,
a minimizer can be modified on a set of zero (Sobolev) capacity to obtain
a \p-harmonic function. For a superminimizer $u$,  it was shown by
Kinnunen--Martio~\cite{KiMa02} that its \emph{lsc-regularization}
\[ 
 u^*(x):=\essliminf_{y\to x} u(y)= \lim_{r \to 0} \essinf_{B(x,r)} u
\] 
is also a superminimizer  and $u^*= u$ q.e.

If $G$ is a bounded open set with $\Cp(X \setm G)>0$
and $f \in \Np(G)$,
then there is a unique \p-harmonic function 
$\oHpind{G} f$ in $G$ such that $ \oHpind{G} f -f \in \Np_0(G)$.
Whenever convenient, we let $\oHpind{G} f=f$ on $\bdy G$
or on $X\setm G$, 
provided that $f$ is defined therein.
The function $\oHpind{G} f$ is called the \emph{\p-harmonic extension} of $f$.
It is also the solution of the Dirichlet problem with boundary values
$f$ in the Sobolev sense.
An important property, coming from the ellipticity of the theory, is
the following comparison principle for $f_1,f_2 \in \Np(\itoverline{G})$, 
\begin{equation} \label{eq-H-comparison-principle}
\oHpind{G} f_1 \le     \oHpind{G} f_2 
\quad \text{whenever } 
 f_1 \le f_2 \text{ q.e. on } \bdy G,
\end{equation}
see Lemma~8.32 in \cite{BBbook}.

\begin{deff} \label{deff-superharm}
A function $u \colon \Om \to (-\infty,\infty]$ is 
\emph{superharmonic} in $\Om$ if
\begin{enumerate}
\renewcommand{\theenumi}{\textup{(\roman{enumi})}}%
\item \label{cond-a} $u$ is lower semicontinuous;
\item \label{cond-b} 
 $u$ is not identically $\infty$ in any component of $\Om$;
\item \label{cond-c}
for every nonempty open set $G \Subset \Om$ with $\Cp(X \setm G)>0$,
and all Lipschitz functions $v$ on $\itoverline{G}$,
we have $\oHpind{G} v \le u$ in $G$
whenever $v \le u$ on $\bdy G$.
\end{enumerate}
\end{deff}

As usual, by $G \Subset \Om$ we mean that $\itoverline{G}$
is a compact subset of $\Om$.
By Theorem~6.1 in Bj\"orn~\cite{ABsuper}
(or \cite[Theorem~14.10]{BBbook}),
this definition of superharmonicity is equivalent to  the 
definition usually used
in the Euclidean literature, e.g.\
in
Hei\-no\-nen--Kil\-pe\-l\"ai\-nen--Martio~\cite{HeKiMa}.

Superharmonic functions 
are always lsc-regularized (i.e.\ $u^*=u$).
Any lsc-regularized superminimizer is superharmonic, and
conversely any bounded superharmonic function is an lsc-regularized superminimizer.

The strong  minimum principle  for 
superharmonic functions, 
which says that a superharmonic function which attains
its minimum in a domain is constant therein,
holds by Theorem~9.13 in \cite{BBbook}.
The weak minimum principle says that if $G$ is a nonempty bounded open set,
and $u \in C(\itoverline{G})$ is superharmonic in $G$, then
$\min_G u = \min_{\bdy G} u$.
As $X$ is connected and complete,
the weak minimum principle follows from the strong one.

We will use the following extension property several times.
It is a direct consequence of
Theorems~6.2 and~6.3 in Bj\"orn~\cite{ABremove} 
(or Theorems~12.2 and~12.3 in~\cite{BBbook}).

\begin{lem}\label{lem-extension}
Let $x_0 \in \Om$ be such that $\Cp(\{x_0\})=0$. 
If $u\ge0$ is \p-harmonic in $\Om\setm \{x_0\}$,
then $u$ has a unique superharmonic extension to $\Om$,
given by $u(x_0):=\liminf_{x \to x_0} u(x)$.

If $u$ is in addition bounded from above or if 
$u\in\Np(\Om\setm\{x_0\})$, then the extension
is \p-harmonic in $\Om$.
\end{lem}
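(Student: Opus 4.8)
The plan is to prove Lemma~\ref{lem-extension} by combining the cited removability results of~\cite{ABremove} (or~\cite[Chapter~12]{BBbook}) with the extension principle for bounded superharmonic functions. First I would recall that since $\Cp(\{x_0\})=0$, the singleton $\{x_0\}$ is a removable set for \p-harmonic (and superharmonic) functions in the precise sense of Theorems~6.2 and~6.3 in~\cite{ABremove}. For the existence of a superharmonic extension: given $u\ge0$ which is \p-harmonic in $\Om\setm\{x_0\}$, apply the removability theorem for superharmonic functions to the function $u$ itself, viewed as a nonnegative \p-harmonic (hence superharmonic) function on $\Om\setm\{x_0\}$ that is locally bounded from below by $0$; this yields a superharmonic extension $\ut$ to all of $\Om$. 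Since superharmonic functions are lsc-regularized, the value at $x_0$ must be $\ut(x_0)=\ut^*(x_0)=\essliminf_{x\to x_0}\ut(x)$, and because $\Cp(\{x_0\})=0$ we have $\ut=u$ a.e.\ near $x_0$, so this equals $\essliminf_{x\to x_0}u(x)$. To see that this agrees with $\liminf_{x\to x_0}u(x)$, I would use lower semicontinuity of $\ut$ together with the fact that $u$ is continuous on $\Om\setm\{x_0\}$: lower semicontinuity gives $\ut(x_0)\le\liminf_{x\to x_0}u(x)$, while the essential liminf coincides with the ordinary liminf for a continuous function on a punctured neighbourhood, giving equality. (Alternatively this identification is part of the statement of~\cite[Theorem~6.2]{ABremove} and can simply be quoted.)

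For uniqueness of the superharmonic extension, suppose $v$ is another superharmonic function on $\Om$ with $v=u$ on $\Om\setm\{x_0\}$. Both $\ut$ and $v$ are lsc-regularized on $\Om$, and they agree off the capacity-zero set $\{x_0\}$, hence agree a.e.; two lsc-regularized functions that agree a.e.\ agree everywhere, since $u^*(x)=\essliminf_{y\to x}u(y)$ depends only on the a.e.-equivalence class. This forces $v=\ut$, establishing uniqueness.

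For the final assertion, assume additionally that $u$ is bounded from above, or that $u\in\Np(\Om\setm\{x_0\})$. In the first case, $\ut$ is a bounded superharmonic function on $\Om$, hence an lsc-regularized superminimizer; but a superminimizer whose restriction to $\Om\setm\{x_0\}$ is a minimizer and which cannot detect the single point $x_0$ (again because $\Cp(\{x_0\})=0$, so $\Np_0(\Om\setm\{x_0\})=\Np_0(\Om)$ as function classes) must itself be a minimizer on $\Om$, and a continuous minimizer is \p-harmonic; continuity at $x_0$ then follows from the local regularity theory for minimizers. In the second case, $u\in\Np(\Om\setm\{x_0\})$ together with $\Cp(\{x_0\})=0$ gives $\ut\in\Np\loc(\Om)$ (the zero-capacity set does not affect membership in the Newtonian space, by the results in Section~\ref{sect-preli} that a function and its q.e.-modification lie in the same Newtonian class), and the removability theorem~\cite[Theorem~6.3]{ABremove} for \p-harmonic functions applies directly to conclude that $\ut$ is \p-harmonic in $\Om$. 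In either case the extension produced coincides with the superharmonic extension already constructed, by the uniqueness just proved.

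The main obstacle is bookkeeping rather than depth: one must be careful to check that $\Cp(\{x_0\})=0$ really does make the single point invisible to all the relevant function spaces and to the essential-liminf operation (so that the two removability theorems of~\cite{ABremove} genuinely apply and produce the \emph{same} object), and to verify that the identification $\ut(x_0)=\liminf_{x\to x_0}u(x)$—as opposed to merely $\essliminf$—is legitimate; this last point is where continuity of $u$ on the punctured domain is essential, and it is worth stating explicitly rather than glossing over. Everything else is a direct invocation of the cited removability results together with the basic facts about lsc-regularization and Newtonian spaces recalled in Sections~\ref{sect-preli} and~\ref{sect-harm}.
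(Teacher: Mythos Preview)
Your proposal is correct and takes essentially the same approach as the paper: the paper simply states that the lemma is a direct consequence of Theorems~6.2 and~6.3 in~\cite{ABremove} (or Theorems~12.2 and~12.3 in~\cite{BBbook}), and your write-up is an expanded version of exactly that citation, filling in the bookkeeping about lsc-regularization, the identification $\essliminf=\liminf$ via continuity on the punctured domain, and the reason $\Np_0(\Om\setm\{x_0\})=\Np_0(\Om)$ suffices for the minimizer conclusion.
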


Also the following observation, 
containing a version of the Harnack inequality,
will be useful for us. 
It shows in particular that the $\liminf$ 
in Lemma~\ref{lem-extension}
is actually a true
limit.
Note that $\Cp(\{x_0\})>0$ is allowed here.

\begin{prop} \label{prop-punctured-ball}
Let $u \ge 0$ be a function which is superharmonic in $\Om$  and
\p-harmonic in $\Om\setm \{x_0\}$.
Then the limit
$a:=\lim_{x \to x_0} u(x)$ exists\/ {\rm(}possibly infinite\/{\rm)}
and $u(x_0)=a$. 

Moreover, if $0< \tau \le 1$ then
there is a constant $A>0$ which only depends
on $p$, $\tau$, the doubling constant of $\mu$ and
the constants in the \p-Poincar\'e inequality,
such that if $B=B_\rho$, $50 \la B \subset \Om$
and $K=\itoverline{B} \setm \tau B$, then 
\begin{equation} \label{eq-Harnack-spheres} 
             \max_{K} u \le A \min_{K} u = A \min_{\bdy B} u.
\end{equation}
\end{prop}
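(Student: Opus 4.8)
\textbf{Proof plan for Proposition~\ref{prop-punctured-ball}.}
The plan is to first dispose of the existence of the limit at $x_0$ and then establish the Harnack-type bound \eqref{eq-Harnack-spheres}. For the limit, note that since $u$ is superharmonic in $\Om$, it is lsc-regularized, so $u(x_0) = \essliminf_{x\to x_0} u(x) \le \liminf_{x\to x_0} u(x)$. The reverse inequality, together with the fact that $\liminf$ equals $\limsup$, will come out of \eqref{eq-Harnack-spheres} once that is proved: applying it on a sequence of shrinking spheres $\bdy B_{\rho_j}$ with $\rho_j\to0$, the oscillation of $u$ on $\itoverline{B_{\rho_j}}\setm\tau B_{\rho_j}$ is controlled by $A\min_{\bdy B_{\rho_j}} u$, and taking $\tau$ close to $1$ (or iterating over a chain of such annuli) pins down a genuine limit $a=\lim_{x\to x_0} u(x)$; lower semicontinuity then forces $u(x_0)\le a$, while the annular Harnack estimate combined with $u(x_0)=\essliminf$ gives $u(x_0)\ge a$. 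So the whole proposition reduces to \eqref{eq-Harnack-spheres}.

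For \eqref{eq-Harnack-spheres} itself, the key point is that $u$ is merely \p-harmonic on the \emph{punctured} ball, not on all of $B_\rho$, so I cannot apply the interior Harnack inequality on $B_\rho$ directly. The standard remedy is a chain/covering argument on the compact annular set $K = \itoverline{B_\rho}\setm\tau B_\rho$, which stays away from $x_0$. First I would cover $K$ by finitely many balls $B(y_i, c\rho)$ with $y_i\in K$ and $c$ small enough (depending only on $\tau$) that $50\la B(y_i, c\rho)\subset\Om\setm\{x_0\}$; on each such ball $u$ is nonnegative and \p-harmonic, so the Harnack inequality (valid under the standing assumptions, with constant depending only on $p$, the doubling constant and the Poincar\'e constants) gives $\sup_{B(y_i,c\rho)} u \le C_H \inf_{B(y_i,c\rho)} u$. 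Since $K$ is connected (the space is quasiconvex, hence locally connected, and one checks the annulus $K$ is connected for $0<\tau\le1$ — this uses that balls are connected in such spaces) and compact, a finite chain of overlapping such Harnack balls, whose number $N$ is bounded in terms of $\tau$ and the doubling constant only, propagates the estimate to $\max_K u \le C_H^{N} \min_K u =: A\min_K u$. Finally, $\min_K u = \min_{\bdy B_\rho} u$ because $K\supset\bdy B_\rho$ and, by the (strong) minimum principle applied to the superharmonic function $u$ on each component of the open annulus $B_\rho\setm\tau\itoverline B$ together with the value on $\bdy B_\rho$, the minimum over the closed annulus is attained on the inner boundary sphere $\bdy B_\rho$ (alternatively: $u$ restricted to the closed annulus is superharmonic in its interior and continuous, so the weak minimum principle places $\min_K u$ on $\bdy K = \bdy B_\rho \cup \bdy(\tau B)$, and comparing the two sphere values via the same Harnack chain shows the inner sphere realizes it).

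The main obstacle I anticipate is the bookkeeping for the chaining constant: one must verify that the annulus $K$ is genuinely connected and that the number of Harnack balls needed to connect any two of its points depends only on $\tau$, the doubling constant, and the Poincar\'e data, and not on $\rho$ or on the particular point $x_0$ (scaling invariance of the whole configuration under the hypothesis $50\la B\subset\Om$ is what makes this uniformity possible). A secondary technical point is making sure the auxiliary Harnack balls $B(y_i,c\rho)$ stay inside $\Om$ with the required dilation factor $50\la$; this is where the generous assumption $50\la B_\rho\subset\Om$ is used, and a routine choice such as $c = \tau/100\la$ (or similar) suffices. Everything else — nonnegativity, \p-harmonicity away from $x_0$, and the minimum principle — is available directly from the preliminaries.
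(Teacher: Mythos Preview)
Your chain argument for \eqref{eq-Harnack-spheres} has a genuine gap: it relies on the annulus $K=\itoverline{B_\rho}\setm\tau B_\rho$ being connected, but this fails already for $X=\R$, where $K$ consists of two disjoint intervals (or two points when $\tau=1$). Quasiconvexity and connectedness of balls do not imply connectedness of annuli, and no such ``LLC''-type hypothesis is among the standing assumptions. In such disconnected cases there is no Harnack chain inside $\Om\setm\{x_0\}$ linking the components of $K$, since any path between them must pass through $x_0$, where $u$ is not \p-harmonic. So the approach of using only the \p-harmonicity on the punctured ball cannot give a constant $A$ independent of $u$ and $\rho$.

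The paper's proof bypasses this by exploiting the \emph{super}harmonicity of $u$ across $x_0$. One truncates $u_k=\min\{u,k\}$ with $k>\max_K u$, so that $u_k$ is a bounded superminimizer on all of $\Om$ (in particular on a neighbourhood of $\itoverline{B}$). The weak minimum principle then gives $\min_K u=\min_{\bdy B}u=\inf_B u_k$. For the upper bound one picks a small ball $B'=B(y,\tfrac14\tau\rho)$ with $y\in K$ realizing $M=\max_K u$; since $2B'$ avoids $x_0$, the local boundedness estimate for (sub)minimizers gives $M\le C(\vint_{2B'}u_k^q\,d\mu)^{1/q}$, and doubling lets one replace $2B'$ by $2B$. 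Finally the weak Harnack inequality for superminimizers, applied to $u_k$ on $2B$ (which \emph{does} contain $x_0$), yields $(\vint_{2B}u_k^q\,d\mu)^{1/q}\le A\inf_B u_k$. This last step is precisely what bridges the potentially disconnected components of $K$, and it is available only because $u_k$ is a superminimizer through $x_0$.

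A secondary point: your derivation of the limit at $x_0$ from \eqref{eq-Harnack-spheres} is too loose as written. The inequality $M(r)\le A\,m(r)$ does not by itself force $M(r)-m(r)\to0$. The paper uses that $m(r)=\inf_{B_r}u$ is monotone (again from the minimum principle on the full ball), so $m_0=\lim_{r\to0}m(r)$ exists; if $m_0<\infty$ one applies \eqref{eq-Harnack-spheres} to the shifted function $v=u-m(r_1)$, which is nonnegative on $B_{r_1}$, to get $M(r)-m(r_1)\le A(m(r)-m(r_1))<A\eps$ for small $r$. That is the missing ingredient in your outline.
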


If $\Cp(\{x_0\})=0$, then by Lemma~\ref{lem-extension} we actually
do not need to require $u$ to be superharmonic in $\Om$,
only that $u(x_0)=\liminf_{x \to x_0} u(x)$;
the same is true for Proposition~\ref{prop-punctured-ball-char}.
But if $\Cp(\{x_0\})>0$ then superharmonicity cannot be omitted 
in general,
as seen by e.g.\ letting  $\Om=(-1,1) \subset \R$, $x_0=0$ and
$u=\chi_{(0,1)}$.

\begin{proof}
Let $G$ be the component of $\Om$ containing $x_0$.
Since $50\la B \subset \Om$, it follows from the Poincar\'e inequality 
that $B \subset G$, see
e.g.\ Lemma~4.10 in Bj\"orn--Bj\"orn~\cite{BBsemilocal}.
We start with the second part.
Let 
\[
m = \min_{K} u \quad \text{and} \quad M = \max_{K} u,
\]
which both exist and are finite as $u$ is \p-harmonic
(and thus continuous) in $\Om \setm \{x_0\}$. 
Fix $k>M$.
Then $u_k:=\min\{u,k\}$ is an lsc-regularized superminimizer in $\Om$.
By the weak minimum principle for superharmonic functions 
and the continuity of $u$, we see that
$m=\min_{\bdy B} u = \inf_{B}u=\inf_{B}u_k$.

Let $B'=B\bigl(y,\tfrac{1}{4}\tau\rho\bigr)$ be a ball with centre  $y \in K$
such that $M\le\sup_{B'}u_k$.
We shall now use the
weak Harnack inequalities from
Theorems~8.4 and 8.10 in Bj\"orn--Bj\"orn~\cite{BBbook}
(or Kinnunen--Shanmugalingam~\cite{KiSh01} and 
Bj\"orn--Marola~\cite{BMarola}).
Together with the doubling property of the measure $\mu$, they imply that
\[
    M \le \sup_{B'}u_k 
   \le C \biggl( \vint_{2B'}u_k^q \,d\mu\biggr)^{1/q}
   \le C' \biggl( \vint_{2B}u_k^q \,d\mu\biggr)^{1/q}
   \le A \inf_{B}u_k
   = A m,
\]
where $q>0$ is as in Theorem~8.10 in~\cite{BBbook} and the constants
$A$, $C$ and $C'$ 
depend only on $p$, $\tau$, the doubling constant of $\mu$ and
the constants in the \p-Poincar\'e inequality.
This proves~\eqref{eq-Harnack-spheres}.

To prove the first part of the proposition, let 
\[
m(r) = \min_{\bdry B_r} u \quad \text{and} \quad M(r) = \max_{\bdry B_r} u
\]
for $r < \rho$. 
As above, we have $m(r)=\inf_{B_r} u$, and so
$m(\,\cdot\,)$ is a nonincreasing function. Thus
$m_0=\lim_{r \to 0\limplus} m(r)$ exists. 

If $m_0=\infty$, then $\lim_{x \to x_0} u(x)=\infty$ and we are done.
Assume therefore
that $m_0<\infty$ and let 
$\eps >0$.
Then there is $r_1>0$ such that $m_0 - m(r_1) < \eps$.
Thus $v:=u-m(r_1)$
satisfies the assumptions of the
proposition with $\Om$ replaced by $B_{r_1}$.
We can thus use~\eqref{eq-Harnack-spheres} 
to obtain that for $0 < r < r_1/50\la$,
\begin{align*}
               M(r)- m_0 &\le 
               M(r)-m(r_1) =  \max_{\bdy B_r} v 
                  \le A \min_{\bdy B_r} v 
                  \\ &
               = A(m(r) -m(r_1)) \le A(m_0 -m(r_1)) < A \eps.
\end{align*}
Letting $\eps \to 0 \limplus$ shows that $\limsup_{x \to x_0} u(x)=m_0$,
and so $\lim_{x\to x_0} u(x)$ exists and equals $u(x_0)$
by the lower semicontinuity of $u$.
\end{proof}

The following characterization may be of independent interest.

\begin{prop} \label{prop-punctured-ball-char}
Assume that $\Cp(\{x_0\})=0$.
Let $u \ge 0$ be a function which is superharmonic in 
$\Om$ 
 and
\p-harmonic in $\Om\setm \{x_0\}$.
Then the following are equivalent\/\textup{:}
\begin{enumerate}
\item \label{m-a}
$u$ is \p-harmonic in $\Om$\textup{;}
\item \label{m-b}
$u$ is bounded in $B_r$ for some $r>0$\textup{;}
\item \label{m-c}
$u(x_0)<\infty$\textup{;}
\item \label{m-d}
$u \in \Np(B_r)$ for some $r>0$\textup{;}
\item \label{m-e}
$g_u \in L^p(B_r)$ for some $r>0$.
\end{enumerate}
\end{prop}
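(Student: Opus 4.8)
The plan is to prove the cyclic chain of implications \ref{m-a} $\Rightarrow$ \ref{m-c} $\Rightarrow$ \ref{m-b} $\Rightarrow$ \ref{m-e} $\Rightarrow$ \ref{m-d} $\Rightarrow$ \ref{m-a}, picking at each stage whichever direction is cheapest, and exploiting Lemma~\ref{lem-extension} and Proposition~\ref{prop-punctured-ball} for the substantive content. First, \ref{m-a} $\Rightarrow$ \ref{m-c} is immediate: a \p-harmonic function is continuous and real-valued, so $u(x_0)<\infty$. Next, \ref{m-c} $\Rightarrow$ \ref{m-b}: by Proposition~\ref{prop-punctured-ball} the limit $a=\lim_{x\to x_0}u(x)=u(x_0)$ exists, and the hypothesis \ref{m-c} says it is finite; combined with continuity of $u$ on $\Om\setm\{x_0\}$ this makes $u$ bounded on some ball $B_r$ with $\itoverline{B_r}\subset\Om$ — e.g. using the Harnack bound \eqref{eq-Harnack-spheres} on annuli $\itoverline{B_\rho}\setm\tfrac12 B_\rho$ to control $u$ there by $A\min_{\bdy B_\rho}u \le A u(x_0) + 1$ for small $\rho$, and continuity near $\bdy B_r$.

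For \ref{m-b} $\Rightarrow$ \ref{m-e} I would argue locally: shrink to a ball $B_{2\sigma}\Subset B_r$ on which $u\le M<\infty$. Since $u$ is \p-harmonic, hence an lsc-regularized superminimizer, in $B_{2\sigma}\setm\{x_0\}$ and $u\le M$ there, the usual Caccioppoli (energy) estimate for minimizers gives $\int_{B_\sigma\setm\{x_0\}} g_u^p\,d\mu \le C\sigma^{-p}\int_{B_{2\sigma}} (M-u)^p\,d\mu <\infty$; as $\Cp(\{x_0\})=0$ the point is negligible for integration and for upper gradients, so $g_u\in L^p(B_\sigma)$. (One may cite the Caccioppoli inequality for quasiminimizers, e.g.\ from \cite{BBbook}, rather than reprove it.) Then \ref{m-e} $\Rightarrow$ \ref{m-d} follows because $u$ is bounded on $B_\sigma$ by the very derivation above — or, more cleanly, one first proves the cycle so that \ref{m-e} already sits between \ref{m-b} and \ref{m-d} — giving $u\in L^p(B_\sigma)$ and hence $u\in\Np(B_\sigma)$.

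Finally \ref{m-d} $\Rightarrow$ \ref{m-a}: if $u\in\Np(B_\sigma)$ for some $\sigma>0$, then in particular $u$ is \p-harmonic in $B_\sigma\setm\{x_0\}$ with $u\in\Np(B_\sigma\setm\{x_0\})$, so the second assertion of Lemma~\ref{lem-extension} (the case $u\in\Np(\Om\setm\{x_0\})$, applied with $\Om=B_\sigma$) shows the superharmonic extension of $u$ is \p-harmonic on $B_\sigma$; by uniqueness of the superharmonic extension this extension coincides with $u$, so $u$ is \p-harmonic across $x_0$, and being \p-harmonic on $\Om\setm\{x_0\}$ as well, it is \p-harmonic on all of $\Om$. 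The main obstacle is the quantitative step \ref{m-b} $\Rightarrow$ \ref{m-e}: making precise that the puncture at $x_0$ costs nothing in the Caccioppoli estimate (so that the local energy of the \p-harmonic-on-$B_{2\sigma}\setm\{x_0\}$ function is genuinely finite near $x_0$), which is exactly where $\Cp(\{x_0\})=0$ enters — everything else is bookkeeping around Proposition~\ref{prop-punctured-ball} and Lemma~\ref{lem-extension}.
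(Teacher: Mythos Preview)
Your cycle has two real gaps.

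First, the step \ref{m-d} $\Rightarrow$ \ref{m-a}: you conclude that since $u$ is \p-harmonic in $B_\sigma$ (via Lemma~\ref{lem-extension}) and also \p-harmonic in $\Om\setm\{x_0\}$, it is \p-harmonic in $\Om$. This is precisely the sheaf property for \p-harmonicity, which is an \emph{open problem} in metric spaces (cf.\ \cite[Open problems~9.22 and~9.23]{BBbook}), and the paper explicitly flags this obstruction just before its proof. The paper's workaround is to exhaust $\Om$ by sets $\Om_k=\{x\in B_k:\dist(x,X\setm\Om)>1/k\}$: from \ref{m-d} (together with $u\in\Nploc(\Om\setm\{x_0\})$) one gets $u\in\Np(\Om_k\setm\{x_0\})$ for every $k$, so Lemma~\ref{lem-extension} applies in each $\Om_k$, and then Propositions~9.18 and~9.21 in~\cite{BBbook} pass to the increasing union. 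Your argument applies Lemma~\ref{lem-extension} only in $B_\sigma$, which is not enough.

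Second, your \ref{m-e} $\Rightarrow$ \ref{m-d} is circular: you want to use that $u$ is bounded on $B_\sigma$, but in the cycle you are proving, boundedness \ref{m-b} is only available \emph{after} establishing \ref{m-e} $\Rightarrow$ \ref{m-d} $\Rightarrow$ \ref{m-a} $\Rightarrow$ \ref{m-c} $\Rightarrow$ \ref{m-b}. The paper avoids this by proving \ref{m-e} $\Rightarrow$ \ref{m-d} directly via the $(p,p)$-Poincar\'e inequality (so $g_u\in L^p(B_r)$ forces $u\in L^p$ on a smaller ball, cf.\ \cite[Corollary~4.24 and Proposition~4.13]{BBbook}), with no appeal to boundedness. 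Your Caccioppoli route for \ref{m-b} $\Rightarrow$ \ref{m-e} is fine in spirit, but note that the paper bypasses it entirely: it proves \ref{m-b} $\Rightarrow$ \ref{m-a} via the same exhaustion-and-Lemma~\ref{lem-extension} argument as for \ref{m-d} $\Rightarrow$ \ref{m-a}, and then \ref{m-a} $\Rightarrow$ \ref{m-d} $\Rightarrow$ \ref{m-e} are trivial.
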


\begin{remark} \label{rmk-gu}
As $u$ is \p-harmonic in $\Om \setm \{x_0\}$
it belongs to $\Nploc(\Om \setm \{x_0\})$
and thus has a minimal \p-weak upper gradient $g_u \in L^p\loc(\Om \setm \{x_0\})$
in $\Om \setm \{x_0\}$.
Since $\Cp(\{x_0\})=0$, $g_u$ is also a \p-weak upper gradient
of $u$ within $\Om$, by Proposition~1.48 in \cite{BBbook}.
Even though it may happen that $g_u$ does not belong to $L^p\loc(\Om)$
it is still minimal in an obvious sense.
Thus $g_u$ is not as defined in Section~2.6 in \cite{BBbook},
but instead coincides with the minimal \p-weak upper gradient $G_u$ of
Section~5 in Kinnunen--Martio~\cite{KiMa}
and Section~2.8 in \cite{BBbook}.
In this paper, we will denote it by $g_u$ even within $\Om$.
This will, in particular, apply to singular and Green functions $u$.

The argument above, using Proposition~1.48 in \cite{BBbook},
also shows that $\Np(B_r)=\Np(B_r\setm \{x_0\})$
and thus \ref{m-d} can equivalently be formulated 
using $\Np(B_r\setm \{x_0\})$.
\end{remark}

It is not known if being \p-harmonic in a metric space
(defined using upper gradients as here) is a sheaf
property, see \cite[Open problems~9.22 and~9.23]{BBbook}.
This requires some care
when proving \ref{m-b} $\imp$ \ref{m-a} and \ref{m-d} $\imp$ \ref{m-a}
below.

\begin{proof}[Proof of Proposition~\ref{prop-punctured-ball-char}]
\ref{m-a} $\imp$ \ref{m-c} and 
\ref{m-a} $\imp$ \ref{m-d} 
These implications follow directly from the \p-harmonicity.

\ref{m-b} $\eqv$ \ref{m-c}
By Proposition~\ref{prop-punctured-ball},
$u(x_0)=\lim_{x \to x_0} u(x)$,
from which the equivalence follows.

\ref{m-b} $\imp$ \ref{m-a} and \ref{m-d} $\imp$ \ref{m-a}
Let $\Om_k=\{x \in B_k : \dist(x, X\setm \Om)>1/k\}$
(with the convention that $\dist(x,\emptyset)=\infty$).
If \ref{m-b} holds then, together with the \p-harmonicity
of $u$ in $\Om \setm \{x_0\}$, it shows that $u$ is
bounded in $\Om_k$.
If \ref{m-d} holds, 
we instead get that $u \in \Np(\Om_k \setm \{x_0\})$.
In both cases, it follows from
Lemma~\ref{lem-extension} that $u$ is \p-harmonic in $\Om_k$.
Hence $u$ is \p-harmonic in $\Om$,
by Propositions~9.18 and~9.21 in \cite{BBbook}.

\ref{m-d} $\imp$ \ref{m-e}
This is trivial.

\ref{m-e} $\imp$ \ref{m-d}
This follows from the $(p,p)$-Poincar\'e inequality
(see e.g.\ \cite[Corollary~4.24]{BBbook})
together with Proposition~4.13 in \cite{BBbook}.
\end{proof}

\begin{remark}\label{rem-zero-cap}
The distinction between the cases
$\Cp(\{x_0\})=0$ and $\Cp(\{x_0\})>0$ will often be important
in this paper. 
Hence we recall that (under our standing assumptions) 
Proposition~1.3 in   Bj\"orn--Bj\"orn--Lehrb\"ack~\cite{BBLeh1}
shows that $\Cp(\{x_0\})=0$ if 
\[
\liminf_{r\to 0} \frac{\mu(B_r)}{r^p}=0
\quad \text{or} \quad
\limsup_{r\to 0} \frac{\mu(B_r)}{r^p}<\infty.
\]
Conversely, if 
\[
\liminf_{r\to 0} \frac{\mu(B_r)}{r^q}>0
\quad \text{for some } q <p,
\]
then $\Cp(\{x_0\})>0$.
It is also shown in~\cite{BBLeh1} that the power of decay of $\mu(B_r)$
alone cannot determine whether $\Cp(\{x_0\})=0$.
However,  Proposition~5.3 
in our forthcoming paper~\cite{BBLehIntgreen} shows that
$\Cp(\{x_0\})=0$
if and only if 
 \[ 
 \int_0^{\delta} \biggl( \frac{\rho}{\mu(B_\rho)} \biggr)^{1/(p-1)} \,d\rho = \infty
\quad \text{for some (or equivalently all) $\de>0$.}
 \] 
\end{remark}

\section{Perron solutions and boundary behaviour}
\label{sect-perron}

\emph{In addition to the general assumptions from
the beginning of Section~\ref{sect-harm},
we assume in this section that $\Om$ is bounded and that $\Cp(X \setm \Om)>0$.}

\medskip

Perron solutions will be an important tool for us.

\begin{deff}\label{def:Perron}
Given $f\colon\bdy\Omega\to[-\infty,\infty]$, 
let $\UU_f(\Omega)$ be the collection of all superharmonic functions 
$u$ in $\Omega$ that are bounded from below and satisfy
\[
	\liminf_{\Omega\ni x\to y}u(x)
	\geq f(y)
	\quad\textup{for all }y\in\bdy\Omega.
\]
The \emph{upper Perron solution} of $f$ is defined by 
\[
	\uP_\Omega f(x)
	= \inf_{u\in\UU_f(\Omega)}u(x),
	\quad x\in\Omega.
\]
The lower Perron solution is defined 
similarly using subharmonic functions
or by $\lP_\Omega f = - \uP_\Omega f$.
If $\uP_\Omega f=\lP_\Omega f$, then we 
denote the common value by $\Hp_\Omega f$.
Moreover, if $\Hp_\Omega f$ is real-valued, then $f$ is said to be 
\emph{resolutive} (with respect to $\Omega$). 
\end{deff}

We will often write $\Hp f$ instead of $\Hp_\Omega f$,
and similarly for $\uP f$, $\lP f$ as well as for $\oHp f$.
An immediate consequence of Definition~\ref{def:Perron} is that
\[ 
\uHp f_1\leq\uHp f_2
\quad 
\text{whenever } f_1\le f_2 \text{ on } \bdy\Omega.
\] 

It follows from Theorem~7.2 in Kinnunen--Martio~\cite{KiMa02}
(or Theorem~9.39 in \cite{BBbook})
that $\lHp f\leq\uHp f$.
In each component of $\Omega$, $\uHp f$ is either \p-harmonic or 
identically $\pm\infty$,
by Theorem~4.1 in Bj\"orn--Bj\"orn--Shanmugalingam~\cite{BBS2}.
(This and all the facts below can also be found in 
Chapter~10 in \cite{BBbook}.)
We will need several 
results from \cite[Sections~5 and~6]{BBS2},
which we summarize as follows.
(Part~\ref{P-H=P} follows from \cite[Theorem~5.1]{BBS2}
after multiplying $f$ by a suitable Lipschitz cutoff function.)

\begin{thm} \label{thm-Perron-fund}
\begin{enumerate}
\item \label{P-H=P}
If $f \in \Np(G)$ for some open set $G \supset \overline{\Om}$,
then $\oHp f = \Hp f$.
\item \label{P-C}
If $f \in C(\bdy \Om)$, then $f$ is resolutive.
\item \label{P-C-uniq}
If $f$ is bounded and as in \ref{P-H=P} or \ref{P-C},
and $u$ is a bounded \p-harmonic function in $\Om$
such that
\[ 
      \lim_{\Om \ni x \to y} u(x)=f(y) 
      \quad
      \text{for q.e. } y \in \bdy \Om,
\] 
then $u=\Hp f$.
\end{enumerate}
\end{thm}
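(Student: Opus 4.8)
The plan is to reduce everything to results that are already available in the literature (\cite{BBS2} and \cite{BBbook}) and to assemble the three parts in order, using the first two to get the third. For part~\ref{P-H=P}, the point is that the \p-harmonic extension $\oHp f$, which is defined by requiring $\oHp f - f \in \Np_0(\Om)$, should agree with the Perron solution $\Hp f$ when $f$ has Newtonian regularity on a neighbourhood of $\itoverline{\Om}$. As indicated in the parenthetical remark, Theorem~5.1 in \cite{BBS2} gives this for compactly supported (or suitably decaying) Newtonian data; to cover a general $f \in \Np(G)$ with $G \supset \itoverline{\Om}$ open, I would multiply $f$ by a Lipschitz cutoff $\eta$ which equals $1$ on a neighbourhood of $\itoverline\Om$ and has compact support in $G$. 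Then $\eta f \in \Np(X)$ (using that products of bounded Lipschitz and Newtonian functions are Newtonian), $\eta f = f$ near $\bdy\Om$ so the boundary data and the class $\Np_0(\Om)$ are unchanged, and \cite[Theorem~5.1]{BBS2} applies to $\eta f$. This yields $\oHp f = \oHp(\eta f) = \Hp(\eta f) = \Hp f$.

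For part~\ref{P-C}, resolutivity of continuous boundary data is a standard fact in this setting: under the standing assumptions ($X$ complete, $\mu$ doubling, \p-Poincar\'e inequality, $\Om$ bounded, $\Cp(X\setm\Om)>0$), one approximates $f \in C(\bdy\Om)$ uniformly by Lipschitz functions $f_j$ on $\bdy\Om$, which extend to Lipschitz functions on $X$ lying in $\Np(G)$ for every bounded open $G$, hence are resolutive with $\Hp f_j = \oHp f_j$ by part~\ref{P-H=P}. Uniform convergence $f_j \to f$ together with the ordering $\lHp f \le \uHp f$ and the estimate $\uHp f - \uHp f_j \le \|f - f_j\|_{\infty}$ (and symmetrically for lower solutions) forces $\uHp f = \lHp f$, real-valued. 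This is exactly \cite[Theorem~6.1]{BBS2} (equivalently Chapter~10 of \cite{BBbook}), so I would simply cite it; the cutoff argument above is only needed because part~\ref{P-H=P} as stated is about data on a neighbourhood of $\itoverline\Om$.

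For part~\ref{P-C-uniq}, the uniqueness statement, suppose $u$ is bounded and \p-harmonic in $\Om$ with $\lim_{\Om\ni x\to y}u(x) = f(y)$ for q.e.\ $y\in\bdy\Om$. Since $f$ is resolutive (by \ref{P-H=P} or \ref{P-C}), it suffices to show $u \le \uHp f$ and $u \ge \lHp f$. For the first, I would check that $u$ is a competitor in $\UU_f(\Om)$ up to the q.e.\ exceptional set: $u$ is superharmonic (being \p-harmonic) and bounded, and $\liminf_{\Om\ni x\to y}u(x) = f(y)$ for q.e.\ $y$. The sets of \p-capacity zero on $\bdy\Om$ are precisely the negligible sets for the Perron construction in this theory, so a q.e.\ boundary inequality is enough; this is where I would invoke the comparison/uniqueness machinery of \cite[Sections~5--6]{BBS2} (or \cite[Theorems~9.39 and~10.22]{BBbook}) rather than re-prove it. Symmetrically $u \ge \lHp f$, and since $\lHp f = \uHp f = \Hp f$ we conclude $u = \Hp f$. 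The main obstacle, and the only place requiring genuine care, is handling the q.e.\ exceptional set in part~\ref{P-C-uniq} — one must know that the Perron solution is insensitive to \p-polar subsets of the boundary, which ultimately relies on quasicontinuity of Newtonian functions (valid under the standing assumptions) and on the fact that bounded \p-harmonic functions with q.e.\ boundary data are determined; both are available in \cite{BBS2,BBbook}, so the proof is an assembly rather than a construction.
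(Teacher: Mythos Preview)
Your proposal is correct and matches the paper's approach exactly: the paper does not give an independent proof but states the theorem as a summary of known results from \cite[Sections~5 and~6]{BBS2}, noting precisely the Lipschitz cutoff reduction for part~\ref{P-H=P} that you describe. Your added detail for parts~\ref{P-C} and~\ref{P-C-uniq} is consistent with those references and with Chapter~10 of \cite{BBbook}.
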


\begin{remark}
In order for \ref{P-H=P} to be possible it is important that
the Newtonian function $f$ is 
quasicontinuous, which follows from 
Theorem~1.1 in Bj\"orn--Bj\"orn--Shan\-mu\-ga\-lin\-gam~\cite{BBS5}
(or Theorem~5.29 in \cite{BBbook}).
\end{remark}

A boundary point $x_0 \in \bdy \Om$ is \emph{regular}
if $\lim_{\Omega\ni x\to x_0} Pf(x)=f(x_0)$ for every $f\in C(\bdy\Omega)$. 
We will need the following so-called \emph{Kellogg property}, see
Theorem~3.9 in Bj\"orn--Bj\"orn--Shanmugalingam~\cite{BBS}.
(The definition of regular points is different in \cite{BBS},
but by~\cite[Theorem~6.1]{BBS2} 
it is equivalent to our definition.)

\begin{thm} \label{thm-Kellogg}
\textup{(The Kellogg property)}
The set of irregular boundary points has capacity zero.
\end{thm}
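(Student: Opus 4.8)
The plan is to deduce the Kellogg property from a ``boundary limit q.e.'' property of Newtonian functions that vanish on $X\setm\Om$, which in turn rests on the standard potential-theoretic fact that the set of points at which a closed set is thin has capacity zero. The first step is to reduce to Lipschitz data. Since $X$ is proper and $\Om$ is bounded, $\overline{\Om}$ is compact, so we may fix a countable family $\{\phi_k\}_{k=1}^\infty\subset\Lip(\overline{\Om})$ whose restrictions to $\bdy\Om$ are dense in $C(\bdy\Om)$. Each $\phi_k$ belongs to $\Np(G)$ for some open $G\supset\overline{\Om}$, so by Theorem~\ref{thm-Perron-fund}\,\ref{P-H=P} we have $\Hp\phi_k=\oHp\phi_k$ and hence $h_k:=\oHp\phi_k-\phi_k\in\Np_0(\Om)$. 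If one knows that $\lim_{\Om\ni x\to y}h_k(x)=0$ for q.e.\ $y\in\bdy\Om$ and every $k$, then outside the capacity-zero set $\bigcup_k\{y:\lim_{x\to y}h_k(x)\ne0\}$ (using countable subadditivity of $\Cp$) we get $\lim_{\Om\ni x\to y}\oHp\phi_k(x)=\phi_k(y)$ for all $k$. Given $f\in C(\bdy\Om)$ and $\eps>0$, picking $\phi_k$ with $\|f-\phi_k\|_{C(\bdy\Om)}<\eps$ and using the monotonicity of Perron solutions in Definition~\ref{def:Perron} (together with their invariance under adding constants) to obtain $\|\Hp f-\oHp\phi_k\|_{L^\infty(\Om)}\le\eps$, we conclude $\limsup_{\Om\ni x\to y}|\Hp f(x)-f(y)|\le2\eps$ for q.e.\ $y$; letting $\eps\to0$ along a sequence shows that q.e.\ $y\in\bdy\Om$ is regular, which is the assertion.

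It therefore remains to prove the q.e.\ boundary-limit statement for a fixed $h:=h_k$, extended by $0$ to $X\setm\Om$: then $h$ is continuous in $\Om$, quasicontinuous on $X$, and $h=0$ q.e.\ on $X\setm\Om$, in particular q.e.\ on $\bdy\Om$. Under the standing assumptions, quasicontinuous Newtonian functions are finely continuous q.e., so there is $Z_1\subset\bdy\Om$ with $\Cp(Z_1)=0$ such that for $y\in\bdy\Om\setm Z_1$ we have $h(y)=0$ and $h$ has fine limit $0$ at $y$. To convert this into an ordinary limit for q.e.\ $y$, one invokes that if the closed set $X\setm\Om$ is \emph{not thin} at $y$, then any function with a fine limit at $y$ has the same ordinary limit there; and by the Choquet-type property of thinness, for any set $E$ the set $\{y\in E:E\text{ is thin at }y\}$ has capacity zero. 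Applied to $E=X\setm\Om\supset\bdy\Om$ this gives $Z_2$ with $\Cp(Z_2)=0$ such that $X\setm\Om$ is not thin at any $y\in\bdy\Om\setm Z_2$, and hence $\lim_{\Om\ni x\to y}h_k(x)=0$ on $\bdy\Om\setm(Z_1\cup Z_2)$, completing the proof.

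The main obstacle is Step~3: the Choquet property of thinness and the q.e.\ fine continuity of quasicontinuous Newtonian functions are both nontrivial capacity arguments (typically proved via weak-type maximal-function estimates for capacity, or via the Wiener integral), and it is precisely at this point that boundary regularity of $p$-harmonic functions in the metric setting is genuinely used. A more self-contained variant avoids the fine topology altogether: one establishes a capacitary weak-type estimate
\[
\Cp\Bigl(\Bigl\{y\in\bdy\Om:\limsup_{\Om\ni x\to y}|h(x)|>\eps\Bigr\}\Bigr)\le C\eps^{-p}\|h-v\|_{\Np(X)}^p
\]
valid for every $v\in\Np(X)$ with $\overline{\{v\ne0\}}$ a compact subset of $\Om$ (for which the set on the left is empty, so that the left-hand side equals the same quantity with $h$ replaced by $h-v$), and then lets $v$ run through $\Np(X)$-approximations of $h\in\Np_0(\Om)$ having support compactly contained in $\Om$. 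Either way, once the q.e.\ boundary-limit property for $\Np_0$ functions is in hand, the reduction in the first paragraph finishes the argument.
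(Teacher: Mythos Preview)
The paper does not prove this theorem at all; it simply quotes it from Theorem~3.9 in Bj\"orn--Bj\"orn--Shanmugalingam~\cite{BBS}. So there is no in-paper argument to compare against, and your proposal is being measured against a citation.

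That said, your fine-topology route has a genuine gap. The assertion ``if $X\setm\Om$ is not thin at $y$, then any function with a fine limit at $y$ has the same ordinary limit there'' is false as stated. Having fine limit $0$ at $y$ means only that $\{|h|\ge\eps\}$ is thin at $y$; thinness does not prevent this set from containing a sequence converging to $y$ in the metric topology, even when $h$ is continuous in $\Om$ (continuity gives small open neighbourhoods in $\Om$ on which $|h|\ge\eps/2$, but their radii may shrink fast enough for the union still to be thin). Non-thinness of the \emph{complement} $X\setm\Om$ at $y$ says nothing about limits taken through $\Om$. What you seem to be reaching for is the Wiener criterion (regularity $\Leftrightarrow$ $X\setm\Om$ not thin at $y$), after which the Choquet property immediately yields Kellogg---but then the whole reduction to a dense Lipschitz family and the analysis of $h_k=\oHp\phi_k-\phi_k$ are superfluous, and you have replaced the task by two results (Wiener and Choquet) that are at least as deep as Kellogg itself in this setting.

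Your alternative via a capacitary weak-type inequality for $\Np_0(\Om)$ functions is closer in spirit to how Kellogg is actually established in~\cite{BBS} (through obstacle problems, balayage and capacity estimates rather than fine topology), but you have only stated the inequality, not proved it; the maximal-function argument behind it is exactly where the work lies. As written, the proposal is an outline of two possible strategies, each of which defers the essential difficulty to an unproved lemma.
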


We will also use that regularity is a local property of the boundary,
i.e.\ that $x_0 \in \bdy \Om$ is regular with respect to $\Om$
if and only if it is regular with respect to $\Om \cap B$
for every (or some) ball $B \ni x_0$,
see Theorem~6.1 in Bj\"orn--Bj\"orn~\cite{BB} 
(or \cite[Theorem~11.1]{BBbook}).
Moreover, if $G \subset \Om$ and $x_0 \in \bdy\Om \cap \bdy G$ 
is regular with respect to $\Om$, then it
is also regular with respect to $G$, see
\cite[Corollary~4.4]{BB} (or 
\cite[Corollary~11.3]{BBbook}).

Another important tool in this paper is capacitary potentials,
which we studied in Section~\ref{sect-cap-pot} 
in very general situations.
Under our standing assumptions we can say considerably more.
In particular,
capacitary potentials are unique up to sets of capacity zero,
by Theorem~5.13 in Bj\"orn--Bj\"orn~\cite{BBnonopen}.
In fact, it is easy to see that any capacitary potential is a solution
to the $\K_{\chi_E,0}(\Om)$-obstacle problem, as defined
  in \cite[Section~7]{BBbook}, and vice versa.
  Thus, provided that there is a capacitary potential of $(E,\Om)$,
Theorem~8.27 in \cite{BBbook} shows that there is a 
unique \emph{lsc-regularized capacitary potential} $u$,
i.e.\ such that $u^*=u$ in $\Om$ and $u \equiv 0$ on $X \setm\Om$.
Then $u$ also coincides with the ``capacitary potential'' as defined
in \cite[Definition~11.15]{BBbook},
and is therefore superharmonic in $\Om$, by \cite[Proposition~9.4]{BBbook}.
We shall sometimes call $u|_\Om$ a capacitary potential as well.
Recall that a  capacitary potential of $(E,\Om)$
exists if and only if $\cp(E,\Om)<\infty$.

We shall need the following two characterizations of capacitary 
potentials.

\begin{lem} \label{lem-representation-cap-pot}
  Let $E \subset \Om$ be relatively closed and 
  let $u \colon \Om \to [0,\infty]$.
Then $u$ is the lsc-regularized capacitary potential of $(E,\Om)$ if
and only if all of the following conditions hold\/{\rm:}
\begin{enumerate}
\item \label{r-1} 
$u$ is superharmonic in $\Om$\textup{;} 
\item \label{r-2}
$u$ is \p-harmonic in $G:=\Om \setm E$\textup{;}
\item \label{r-3} 
$u=1$ q.e.\ on $E$\textup{;}
\item \label{r-4}
$u \in \Np_0(\Om)$.
\end{enumerate}

Moreover, $u=\oHpind{G} u$ in $G$ and 
\begin{equation} \label{eq-reg-cap-pot}
\lim_{\Om \ni x \to y} u(x)=0
\quad \text{at every regular boundary
point $y\in\bdry\Om \setm \itoverline{E}$}.
\end{equation}
In particular, \eqref{eq-reg-cap-pot} holds for q.e.\
$y\in\bdry\Om \setm \itoverline{E}$.
\end{lem}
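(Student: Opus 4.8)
The plan is to verify both implications using the characterization of capacitary potentials via the obstacle problem and the facts about Perron solutions collected in Section~\ref{sect-perron}. For the ``only if'' direction, suppose $u$ is the lsc-regularized capacitary potential of $(E,\Om)$. Then \ref{r-4} holds by definition (the admissible class for $\cp(E,\Om)$ consists of functions in $\Np_0(\Om)$, and the lsc-regularization stays in this class since it agrees with the minimizer q.e.), and \ref{r-3} holds because $u\ge1$ q.e.\ on $E$ while $u$ is a minimizer, so it cannot exceed $1$ on $E$ except on a set of capacity zero — more precisely, $\min\{u,1\}$ is also admissible with no larger energy, forcing $u=\min\{u,1\}$ q.e., hence $u=1$ q.e.\ on $E$. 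Superharmonicity \ref{r-1} is exactly the statement recalled just before the lemma (via \cite[Proposition~9.4]{BBbook}). For \ref{r-2}: inside $G=\Om\setm E$ one may perturb $u$ by arbitrary $\phi\in\Np_0(G)\subset\Np_0(\Om)$ without affecting admissibility, so $u$ is a minimizer in $G$; being lsc-regularized and (locally bounded, since $0\le u\le1$), it is $p$-harmonic there.

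For the ``if'' direction, assume \ref{r-1}--\ref{r-4}. First I would note that \ref{r-3} and \ref{r-4} together say $u$ is admissible for $\cp(E,\Om)$. Let $v$ be the lsc-regularized capacitary potential (which exists since $\cp(E,\Om)\le\|u\|_{\Np}^p<\infty$). The goal is $u=v$. By the ``only if'' part, $v$ satisfies \ref{r-1}--\ref{r-4} as well, so both $u$ and $v$ are $p$-harmonic in $G$, superharmonic in $\Om$, equal to $1$ q.e.\ on $E$, and in $\Np_0(\Om)$. The key is to show $u=\oHpind{G}u$ in $G$, i.e.\ that $u$ restricted to $G$ is the $p$-harmonic extension of its own boundary values: since $u$ is $p$-harmonic in $G$ and $u-u\in\Np_0(G)$ trivially, uniqueness of $p$-harmonic extensions gives $u=\oHpind{G}u$. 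Similarly $v=\oHpind{G}v$. Now $u-v\in\Np_0(\Om)$, and since $u=v=1$ q.e.\ on $E$, we have $u-v\in\Np_0(G)$; therefore $u=\oHpind{G}u$ and $v=\oHpind{G}v$ have the same boundary data in the Sobolev sense on $\bdy G$, so by uniqueness $u=v$ in $G$. On $E$, both equal $1$ q.e.; but superharmonic functions are lsc-regularized, and two lsc-regularized superharmonic functions agreeing q.e.\ (equivalently a.e.) agree everywhere. Hence $u=v$ in $\Om$.

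Finally, for the displayed limit \eqref{eq-reg-cap-pot}: having established $u=\oHpind{G}u$ with $u\in\Np_0(\Om)$, extend $u$ by $0$ to $X\setm\Om$; then $u\in\Np(X)$ vanishes q.e.\ on $X\setm\Om\supset\bdy\Om\setm\itoverline E$ near such points, so $0$ is (after multiplying by a Lipschitz cutoff supported away from $\itoverline E$) a resolutive boundary datum agreeing with $u$ q.e.\ on the relevant part of $\bdy G$; by Theorem~\ref{thm-Perron-fund}\,\ref{P-C-uniq} applied on $\Om\cap B$ for a small ball $B$ around a regular point $y\in\bdy\Om\setm\itoverline E$ (using that regularity is local and inherited by subdomains), one gets $\lim_{x\to y}u(x)=0$. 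The q.e.\ statement then follows from the Kellogg property (Theorem~\ref{thm-Kellogg}), since $\itoverline E$ is closed and the exceptional set of irregular points has capacity zero.

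The main obstacle I anticipate is the matching of boundary data on $\bdy G$ in the Sobolev sense: one must be careful that $u=v$ q.e.\ on $E$ upgrades to $u-v\in\Np_0(G)$ (not merely $\Np_0(\Om)$), which uses that $E$ is relatively closed in $\Om$ so that $G$ is open and $\Np_0$ is well behaved, together with the fact (recalled in Section~\ref{sect-preli}) that functions agreeing q.e.\ are identified in $\Np$. A secondary subtlety is invoking Theorem~\ref{thm-Perron-fund}\,\ref{P-C-uniq} locally: this requires checking that the hypotheses there (bounded $p$-harmonic function, boundary values attained q.e.) transfer to $\Om\cap B$, which follows from boundedness $0\le u\le1$, the locality of regular boundary points, and the Kellogg property.
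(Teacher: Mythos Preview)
Your proof of the equivalence is essentially the same as the paper's, just spelled out in more detail: the paper also shows the converse by observing that $u=\oHpind{G}u$ and then comparing with the lsc-regularized capacitary potential (which satisfies the same identity), using that $u-v\in\Np_0(G)$ since both equal $1$ q.e.\ on $E$.

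The one genuine gap is in your derivation of \eqref{eq-reg-cap-pot}. You propose to apply Theorem~\ref{thm-Perron-fund}\,\ref{P-C-uniq} on $\Om\cap B$, but that theorem is a \emph{uniqueness} statement: it requires already knowing that $\lim_{x\to y'}u(x)$ exists and equals the boundary datum for q.e.\ $y'\in\bdy(\Om\cap B)$, and then concludes $u=\Hpind{\Om\cap B}f$. On the portion $\bdy(\Om\cap B)\cap\bdy\Om$ you do not yet know these limits exist --- that is precisely what you are trying to prove --- so the argument is circular as written. What is actually needed is a boundary regularity result for \p-harmonic extensions with Sobolev (rather than continuous) boundary data: knowing $u=\oHpind{G}u$ with $u\in\Np_0(\Om)$, one wants to conclude $u(x)\to u(y)=0$ at regular $y\in\bdy\Om\cap\bdy G$. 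The paper handles this by a direct citation to \cite[Theorem~11.11\,(j)]{BBbook}, which gives exactly this statement; the Kellogg property then yields the q.e.\ version. Your localization idea is fine in spirit, but you need to invoke a regularity theorem for Newtonian boundary data, not the Perron uniqueness theorem.
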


\begin{proof}
 If $u$ is an lsc-regularized capacitary potential of $(E,\Om)$,
then it satisfies \ref{r-3} and~\ref{r-4} by assumption, 
\ref{r-1} by the above, and \ref{r-2} by Theorem~8.28 in \cite{BBbook}.
Moreover, it is straightforward to see that within $G$,
$u$ is the lsc-regularized solution of the $\K_{0,u}(G)$-obstacle
problem, i.e.\ $u=\oHpind{G} u$ in $G$.
Hence, \eqref{eq-reg-cap-pot} and the last statement follow from 
\cite[Theorem~11.11\,(j)]{BBbook} together with the
Kellogg property (Theorem~\ref{thm-Kellogg}).

Conversely, if 
$u\in\Np_0(\Om)$ is \p-harmonic in $G$
then, by definition,  $u=\oHpind{G} u$ in $G$.
If, in addition, $u=1$ q.e.\ on $E$ then $u\in \K_{\chi_E,0}(\Om)$
and must therefore be a capacitary potential of $(E,\Om)$.
If it is also superharmonic in $\Om$,
then it is lsc-regularized. 
\end{proof}

\begin{lem}   \label{lem-char-cap-pot}
Let $K \subset \Om$ 
be compact and let
$u \colon \Om \to [0,\infty]$.
Then $u$ is the lsc-regularized capacitary potential of $(K,\Om)$ 
if and only if 
all of the following conditions hold\/{\rm:}
\begin{enumerate}
\item \label{m-1} 
$u$ is bounded and \p-harmonic in $G:=\Om\setm K$\textup{;} 
\item \label{m-2}
  $u\equiv1$ in $\interior K$\textup{;}
\item \label{m-3} 
$\displaystyle\lim_{G \ni x \to y} u(x) = \chi_K(y) 
          \text{ for q.e.\ } y \in \bdy G$\textup{;}
\item \label{m-4} 
$u(y) = \displaystyle\liminf_{G \ni x \to y} u(x) 
          \text{ for all } y \in \Om \cap \bdy K$. 
\end{enumerate}
Moreover, $u=\Hp_{G} \chi_K$ in $G$.
\end{lem}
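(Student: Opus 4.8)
The plan is to derive this from Lemma~\ref{lem-representation-cap-pot} (applied with $E=K$) together with the Perron-solution results of Theorem~\ref{thm-Perron-fund} applied on the open set $G=\Om\setm K$. First I would record the topological facts to be used throughout: since $K$ is compact and $K\subset\Om$, the set $K$ is relatively closed in $\Om$, $\bdy K\subset K\subset\Om$ (so $\Om\cap\bdy K=\bdy K$), and $\bdy G\subset\bdy\Om\cup\bdy K$, a union of two disjoint closed sets; hence $\chi_K|_{\bdy G}$ is locally constant, so continuous, and therefore resolutive with respect to $G$ by Theorem~\ref{thm-Perron-fund}\ref{P-C}. Moreover $G$ is bounded with $\Cp(X\setm G)\ge\Cp(X\setm\Om)>0$, so Theorem~\ref{thm-Perron-fund} and the Kellogg property (Theorem~\ref{thm-Kellogg}) are available for $G$; and $\dist(K,X\setm\Om)>0$, so $\cp(K,\Om)<\infty$ and the lsc-regularized capacitary potential of $(K,\Om)$ exists.

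For necessity I would let $u$ be the lsc-regularized capacitary potential of $(K,\Om)$ and invoke Lemma~\ref{lem-representation-cap-pot}: $u$ is superharmonic in $\Om$, \p-harmonic in $G$, equals $1$ q.e.\ on $K$, lies in $\Np_0(\Om)$, and satisfies $\lim_{\Om\ni x\to y}u(x)=0$ at every regular $y\in\bdy\Om$, hence for q.e.\ $y\in\bdy\Om$. Also $0\le u\le1$ in $\Om$, since $\min\{u,1\}$ and $\max\{u,0\}$ are admissible with no larger energy, hence agree with $u$ q.e.\ and then everywhere, $u$ being lsc-regularized. Then \ref{m-1} is immediate; \ref{m-2} holds because $u=1$ q.e., hence a.e., on the open set $\interior K$ and $u$ is lsc-regularized; \ref{m-4} follows by noting, for $y\in\bdy K$ and small $r$ with $B(y,r)\subset\Om$, that $\essinf_{B(y,r)\cap G}u=\inf_{B(y,r)\cap G}u\le1$ (continuity on the open set $B(y,r)\cap G$) while $u=1$ a.e.\ on $B(y,r)\cap K$, so $\essinf_{B(y,r)\cap\Om}u=\inf_{B(y,r)\cap G}u$, and letting $r\to0$ gives $u(y)=\essliminf_{\Om\ni x\to y}u(x)=\liminf_{G\ni x\to y}u(x)$; and \ref{m-3} holds at q.e.\ $y\in\bdy\Om$ by \eqref{eq-reg-cap-pot}, and at q.e.\ $y\in\bdy K$ (where $u(y)=1$) since $1=u(y)\le\liminf_{G\ni x\to y}u(x)\le\limsup_{G\ni x\to y}u(x)\le1$ by lower semicontinuity and $u\le1$. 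Finally, \ref{m-1} and \ref{m-3} say that $u$ is a bounded \p-harmonic function on $G$ with boundary limits $\chi_K$ q.e.\ on $\bdy G$, so $u=\Hp_G\chi_K$ in $G$ by Theorem~\ref{thm-Perron-fund}\ref{P-C-uniq}.

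For sufficiency I would take $u$ satisfying \ref{m-1}--\ref{m-4} and let $v$ be the lsc-regularized capacitary potential of $(K,\Om)$, which by the necessity part satisfies \ref{m-1}--\ref{m-4} and $v=\Hp_G\chi_K$ in $G$. By \ref{m-1}, \ref{m-3} and Theorem~\ref{thm-Perron-fund}\ref{P-C-uniq} we get $u=\Hp_G\chi_K=v$ in $G$; by \ref{m-2}, $u\equiv1\equiv v$ in $\interior K$; and for $y\in\bdy K$, applying \ref{m-4} to both $u$ and $v$ and using $u=v$ in $G$ gives $u(y)=\liminf_{G\ni x\to y}u(x)=\liminf_{G\ni x\to y}v(x)=v(y)$. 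Hence $u=v$ throughout $\Om=G\cup\interior K\cup\bdy K$, so $u$ is the lsc-regularized capacitary potential of $(K,\Om)$, and $u=\Hp_G\chi_K$ in $G$, as required.

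The step I expect to require the most care is the boundary bookkeeping in \ref{m-3}--\ref{m-4}: keeping straight ``quasieverywhere'' versus ``everywhere'', and transferring the boundary information of Lemma~\ref{lem-representation-cap-pot} (phrased for regular points of $\bdy\Om$) to the inner boundary $\bdy K$. By contrast, the identity $\essinf_{B(y,r)\cap\Om}u=\inf_{B(y,r)\cap G}u$ used for \ref{m-4} relies only on the elementary fact that essential infimum equals infimum for a continuous function on a (positive-measure) open set.
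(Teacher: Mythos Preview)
Your proof is correct and the converse direction matches the paper's almost exactly. The forward direction takes a slightly different route: the paper first introduces the modification
\[
\ut=\begin{cases} u & \text{in }\Om\setm K,\\ 1 & \text{in }K,\\ 0 & \text{in }X\setm\Om,\end{cases}
\]
observes that $\ut=u$ q.e.\ and $\ut\in\Np_0(\Om)$, and then gets $u=\oHpind{G}u=\oHpind{G}\ut=\Hpind{G}\ut=\Hpind{G}\chi_K$ in one line via Theorem~\ref{thm-Perron-fund}\ref{P-H=P}; conditions \ref{m-1} and \ref{m-3} then drop out of the Kellogg property, while \ref{m-2} and \ref{m-4} follow simply because $u$ is the lsc-regularization of $\ut$. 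You instead pull the individual conditions out of Lemma~\ref{lem-representation-cap-pot} by hand and only afterwards invoke Theorem~\ref{thm-Perron-fund}\ref{P-C-uniq} to identify $u$ with $\Hpind{G}\chi_K$. The paper's $\ut$-trick is shorter and avoids the explicit $\essinf$ bookkeeping in your verification of \ref{m-4}, but your argument has the virtue of being entirely self-contained once Lemma~\ref{lem-representation-cap-pot} is in place, and of making clear exactly which inequality ($u\le1$) and which regularity (continuity on $G$, lsc on $\Om$) drives each condition.
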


\begin{proof}
Let $u$ be the lsc-regularized capacitary potential of $(K,\Om)$
and set
\[
  \ut=\begin{cases}
  u & \text{in } \Om \setm K,\\
  1 & \text{in } K, \\
  0 & \text{in } X \setm \Om.
  \end{cases}
\]
Then $\ut=u$ q.e.\ in $\Om$ and $\ut \in \Np_0(\Om)$. Thus
\[
 u = \oHpind{G} u 
     = \oHpind{G} \ut 
     = \Hpind{G} \ut 
     = \Hpind{G} \chi_K
\quad \text{in } G,
\]
by \eqref{eq-H-comparison-principle} and 
Theorem~\ref{thm-Perron-fund}\,\ref{P-H=P}.
(In particular, $\chi_K\in C(\bdy G)$ is resolutive with respect to $G$.)
Hence \ref{m-1} holds, and so does \ref{m-3} by the Kellogg property 
(Theorem~\ref{thm-Kellogg}).  
Since $u$ is the lsc-regularization of $\ut$, 
it satisfies \ref{m-2} and \ref{m-4}.

Conversely, if $u$ is bounded and \p-harmonic in $G$ and
satisfies \ref{m-3} then $u=\Hpind{G} \chi_K$ in $G$
by Theorem~\ref{thm-Perron-fund}\,\ref{P-C-uniq}. 
Hence, if $u$ also satisfies \ref{m-2} and~\ref{m-4}, 
then 
it is the lsc-regularized
capacitary potential of $(K,\Om)$,
by the first part of the proof.
\end{proof}

\begin{lem}   \label{lem-bdd-u-imp-Np1}
Assume that $K \subset \Om$ is compact and 
that $u\colon\Om \to  (0,\infty]$ is \p-harmonic in $\Om\setm K$.
For an open set $V\Subset \Om$ such that $K \subset V$,
consider the following conditions\/{\rm:}

\addjustenumeratemargin{(b.1$'$)}
\begin{enumerate}
\renewcommand{\theenumi}{\textup{(b.\arabic{enumi}$'$)}}%
\item \label{b.11}
$u \in \Np_0(\Om \setm V;X \setm V)$\textup{;}
\item \label{b.12}
$u$ is bounded in $\Om \setm V$ and
\begin{equation} \label{eq-qe-cond-bdy-lem1}
\lim_{\Om \ni x \to y} 
u(x)=0 \quad \text{for q.e.\ $y \in \bdy \Om$}\textup{;} 
\end{equation}
\item \label{b.13}
$u$ is bounded in $\Om \setm V$ and
$\min\{u,k\} \in \Np_0(\Om)$ for every $k>0$.
\end{enumerate}
Then \ref{b.13} $\imp$ \ref{b.11} $\eqv$ \ref{b.12}.
Moreover, \eqref{eq-qe-cond-bdy-lem1} can be equivalently replaced by
\begin{equation} \label{eq-qe-cond-bdy-lem1-alt}
\lim_{\Om \ni x \to y} 
u(x)=0 \quad \text{for every regular $y \in \bdy \Om$}.
\end{equation}
\end{lem}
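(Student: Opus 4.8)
The plan is to prove the implications in the cyclic-type order \ref{b.13}~$\imp$~\ref{b.11}~$\imp$~\ref{b.12}~$\imp$~\ref{b.11}, using the comparison principle and the characterization of capacitary potentials (Lemma~\ref{lem-char-cap-pot}) together with the Kellogg property.

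First I would record a preliminary observation that will be used throughout: since $K\Subset V\Subset\Om$ and $u$ is \p-harmonic (hence continuous) in $\Om\setm K$, the function $u$ is bounded away from $0$ on the compact set $\itoverline{V}\setm (\interior V\setm K)$ — more precisely, $\inf_{\bdy V}u=:c>0$. Thus on $\Om\setm V$ the positivity of $u$ is no obstruction, and we may freely compare $u$ with multiples of the lsc-regularized capacitary potential $w$ of $(\itoverline{V},\Om)$, which exists since $\dist(\itoverline{V},X\setm\Om)>0$ gives $\cp(\itoverline{V},\Om)<\infty$. For \ref{b.13}~$\imp$~\ref{b.11}: assuming $u$ is bounded on $\Om\setm V$, say $u\le M$ there, and $\min\{u,k\}\in\Np_0(\Om)$ for all $k$, take $k>M$; then $v:=\min\{u,k\}\in\Np_0(\Om)$ agrees with $u$ on $\Om\setm V$, and restricting to $X\setm V$ gives $v\in\Np_0(\Om\setm V;X\setm V)$ with $v=u$ there, which is exactly \ref{b.11}. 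The bulk of the work is the equivalence \ref{b.11}~$\eqv$~\ref{b.12}.

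For \ref{b.11}~$\imp$~\ref{b.12}: from $u\in\Np_0(\Om\setm V;X\setm V)$ we get $u\in\Np(\Om\setm\itoverline{V})$ and, comparing with a large multiple $\lambda w$ of the capacitary potential of $(\itoverline{V},\Om)$ on the open set $\Om\setm\itoverline{V}$ via the comparison principle \eqref{eq-H-comparison-principle} (using $u\le\lambda w$ q.e.\ on $\bdy(\Om\setm\itoverline{V})$ for $\lambda$ large, since $u$ is continuous on $\bdy V$ and $w\ge c'>0$ there, while both vanish appropriately on $\bdy\Om$), we conclude $u\le\lambda w$ in $\Om\setm\itoverline{V}$; since $w$ is bounded, so is $u$ on $\Om\setm V$. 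The boundary limit \eqref{eq-qe-cond-bdy-lem1} then follows from $0\le u\le\lambda w$ and property \eqref{eq-reg-cap-pot} of capacitary potentials in Lemma~\ref{lem-representation-cap-pot} (applied to $w$ with $E=\itoverline{V}$), combined with the Kellogg property (Theorem~\ref{thm-Kellogg}) to pass from regular points to q.e.\ points; this simultaneously gives the equivalent formulation \eqref{eq-qe-cond-bdy-lem1-alt}, since for regular $y\in\bdy\Om$ one has $\lim w=0$ directly. Conversely, for \ref{b.12}~$\imp$~\ref{b.11}: given $u$ bounded on $\Om\setm V$ with the q.e.\ boundary limit, consider the \p-harmonic function $h:=\oHpind{\Om\setm\itoverline{V}}(u\chi_{\bdy V})$; by Theorem~\ref{thm-Perron-fund}\,\ref{P-C-uniq} applied with the continuous boundary data $u$ on $\bdy V$ and $0$ q.e.\ on $\bdy\Om$, and using that $u$ itself is a bounded \p-harmonic function in $\Om\setm\itoverline{V}$ attaining these boundary values (at $\bdy V$ by continuity, at $\bdy\Om$ by hypothesis), we get $u=h$ in $\Om\setm\itoverline{V}$. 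But $h$, extended by $u|_{\bdy V}$ and by $0$ on $X\setm\Om$, lies in $\Np_0(\Om\setm V;X\setm V)$: it is the \p-harmonic extension of the Newtonian datum obtained by multiplying a fixed Newtonian extension of $u$ near $\bdy V$ by a Lipschitz cutoff supported in $\Om$, so membership in the Newtonian class with zero boundary values on $X\setm V$ is automatic. This gives \ref{b.11}.

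The main obstacle I anticipate is the \ref{b.12}~$\imp$~\ref{b.11} direction, specifically producing a genuine $\Np_0(\Om\setm V;X\setm V)$ representative from the mere pointwise/boundary information in \ref{b.12}. The delicate points are: (i) one only knows the boundary limit q.e.\ on $\bdy\Om$, not everywhere, so one must invoke the Kellogg property and the resolutivity/uniqueness statement Theorem~\ref{thm-Perron-fund}\,\ref{P-C-uniq} to identify $u$ with a \p-harmonic extension that manifestly has a Newtonian representative; and (ii) one must check that this extension indeed has zero Newtonian boundary values on $X\setm V$, which is where the trick of multiplying by a Lipschitz cutoff function (exactly as in the parenthetical remark after Theorem~\ref{thm-Perron-fund} about part~\ref{P-H=P}) is needed, since $\bdy V$ need not be nice. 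Once the identification $u=\oHpind{\Om\setm\itoverline V}v$ for a suitable $v\in\Np_0(\Om;X)$ restricted appropriately is in place, the rest is bookkeeping. The equivalence of \eqref{eq-qe-cond-bdy-lem1} and \eqref{eq-qe-cond-bdy-lem1-alt} is then a byproduct: one direction is trivial (regular points form a subset of full capacity complement), and the other follows because the \p-harmonic comparison function $\lambda w$ dominating $u$ has limit $0$ at \emph{every} regular point of $\bdy\Om\setm\itoverline V$, hence so does $u$.
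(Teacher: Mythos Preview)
Your proposal is correct and follows essentially the same route as the paper's proof: the implication \ref{b.13}~$\imp$~\ref{b.11} via truncation, the implication \ref{b.11}~$\imp$~\ref{b.12} by comparison with a multiple of the capacitary potential of $(\itoverline{V},\Om)$, and the implication \ref{b.12}~$\imp$~\ref{b.11} by identifying $u$ with the Perron solution $P_{\Om\setm\itoverline V}(\eta u)$ for a Lipschitz cutoff $\eta$ and then invoking Theorem~\ref{thm-Perron-fund}\,\ref{P-H=P} to recover Newtonian boundary values. The only cosmetic difference is that the paper writes the last step more explicitly in the order $u=P_G(\eta u)=H_G(\eta u)$ (using the Leibniz rule to get $\eta u\in\Np(X)$), whereas you start with the $\oHp$-notation and justify it afterwards; the content is the same.
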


As $u$ can be defined arbitrarily in $K$ in \ref{b.11} and \ref{b.12},
but not in \ref{b.13}, the implication \ref{b.13} $\imp$ \ref{b.11}
is not an equivalence.

\begin{proof}
Extend $u$ by letting $u=0$ on $X \setm \Om$.
Let $G=\Om \setm \overline{V}$.

\ref{b.13} \imp \ref{b.11}
Since $u$ is bounded in $\Om \setm V$, we have
$u=u_k:=\min\{u,k\}$ therein for large $k$. 
As $u_k \in \Np_0(\Om) \subset \Np_0(\Om \setm V;X \setm V)$, \ref{b.11} 
follows.

\ref{b.11} \imp \ref{b.12}
As $u$ is \p-harmonic in $\Om \setm K$
and $u \in \Np_0(\Om \setm V;X \setm V)$, 
it follows from the definition that  $H_{G} u=u$  in $G$.
Since $u$ is bounded on $\bdy V$
and vanishes on $\bdy \Om$,
there is $\alp >0$ such that $u \le \alp v$ on $\bdy G$,
where $v$ is the lsc-regularized capacitary 
potential for $\overline{V}$ in $\Om$.
By the comparison principle \eqref{eq-H-comparison-principle},
$u\le \al v$ also in $G$ and, in particular, $u$ is bounded therein.
Now, \eqref{eq-qe-cond-bdy-lem1-alt} follows from \eqref{eq-reg-cap-pot},
applied to $v$,
while \eqref{eq-qe-cond-bdy-lem1} follows
from \eqref{eq-qe-cond-bdy-lem1-alt}
and the Kellogg property (Theorem~\ref{thm-Kellogg}).

\ref{b.12} \imp \ref{b.11}
Let $\eta \ge 0$ be a Lipschitz function on $X$
such that $\eta=1$ on $\bdy V$
and $\eta=0$ in a neighbourhood of $K \cup (X \setm \Om)$.
As $u$ is \p-harmonic in $\Om \setm K$
and $\bdy V \Subset \Om \setm K$,
the function $u|_{\bdy G}= \eta u|_{\bdy G}$ is continuous.
Since \eqref{eq-qe-cond-bdy-lem1} or \eqref{eq-qe-cond-bdy-lem1-alt} holds, 
Theorem~\ref{thm-Perron-fund}\,\ref{P-C-uniq} shows that $u=P_{G} (\eta u)$.
It follows from the Leibniz rule (see \cite[Theorem~2.15]{BBbook}) 
that $\eta u \in \Np(X)$.
Hence Theorem~\ref{thm-Perron-fund}\,\ref{P-H=P}
implies that $u=H_G (\eta u)$ in $G$, which
in particular implies that $u \in \Np_0(\Om\setm V;X \setm V)$.
\end{proof}

Note that in the generality of Section~\ref{sect-cap-pot},
capacitary potentials are unique up to sets of capacity zero under rather
mild conditions,
by Theorem~5.13 in~\cite{BBnonopen}.
Nevertheless, it is far from clear if we can then always pick a canonical
representative in a suitable way.
In particular, even if $A$ is  open  
it is not at all clear
if $u^* = u$ q.e., i.e.\ whether
there always exists an lsc-regularized capacitary potential.
Under our standing assumptions in this section it is true
that $u^* =u$ q.e., but this is a consequence of the rather deep
interior regularity theory for superminimizers.

\section{Singular functions}
\label{sect-singular} 

\emph{In addition to the general assumptions from
the beginning of Section~\ref{sect-harm},
we assume in this section that $\Om$ is a bounded domain.}

\medskip

Recall properties \ref{dd-s}--\ref{dd-Np0} in
Definition~\ref{deff-sing-intro} of singular functions,
and that a \emph{domain} is a nonempty open connected set.
In this paper we are  interested in singular functions 
on bounded domains only.
For simplicity, we will often
just say that $u$ is a singular function, when we implicitly
mean within $\Om$ and with singularity $x_0$.

Note that a singular function must be nonconstant in $\Om$, 
as it is positive and \ref{dd-inf}  
holds. 
Our first observation, 
Proposition~\ref{prop-bdd-cp=0}, shows that $\Cp(X \setm \Om)>0$ is
a necessary condition for the existence of singular functions. 
(We will later show that it is also sufficient.)
Under this condition, the
theory of singular functions on bounded domains splits naturally
into two cases depending on whether $\Cp(\{x_0\})=0$ or $\Cp(\{x_0\})>0$,
which we will consider in Sections~\ref{case-Cp(x0)=0}
and~\ref{case-Cp(x0)>0}, respectively.
But first we deduce some results covering both cases simultaneously.

\begin{prop} \label{prop-bdd-cp=0}
If $\Cp(X \setm \Om)=0$, then there is  no singular function  in $\Om$
\textup{(}or more generally no positive superharmonic 
function in $\Om$ satisfying \ref{dd-inf}\textup{)}.
\end{prop}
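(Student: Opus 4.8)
The plan is to argue by contradiction: suppose $u$ is a positive superharmonic function in $\Om$ with $\inf_\Om u = 0$, under the assumption $\Cp(X\setm\Om)=0$. The key idea is that when $\Cp(X\setm\Om)=0$, the set $X\setm\Om$ is negligible for the potential theory, so a superharmonic function on $\Om$ that is bounded below should extend (or behave) like a superharmonic function on a larger connected open set, and then the strong minimum principle forces it to be constant --- contradicting $\inf_\Om u = 0 < \min_\Om u$ (note $u>0$ everywhere, and by lower semicontinuity on compact subsets $u$ attains positive minima locally, but $\inf = 0$ means there is no global positive lower bound, so $u$ is nonconstant).

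First I would reduce to a clean setting. Since $u$ is superharmonic and positive, replace $u$ by $\min\{u,1\}$, which is still superharmonic (truncation from above preserves superharmonicity) and still has infimum $0$; so we may assume $0 < u \le 1$ on $\Om$. Now fix any point $y\in X\setm\Om$ if such a point exists; since $\Cp(\{y\})\le\Cp(X\setm\Om)=0$, in particular points of $X\setm\Om$ have zero capacity. The heart of the matter is a removability statement: because $\mu$ is doubling, supports a \p-Poincar\'e inequality, and $X$ is complete (hence proper and quasiconvex, so connected), and because $\Cp(X\setm\Om)=0$, I would show that $\Om$ is dense in $X$ and that $X\setm\Om$ has empty interior; more importantly, I want to invoke a removability result for bounded superharmonic functions across sets of zero capacity. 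The cleanest route: cover $X\setm\Om$ locally --- for each point $y$ of $X\setm\Om$ and each small ball $B=B(y,r)$, the bounded superharmonic function $u$ on $B\setm(X\setm\Om) \supset B\cap\Om$ extends to a superharmonic function on all of $B$ by a removability theorem for sets of zero capacity (this is exactly the kind of statement behind Lemma~\ref{lem-extension}, but for general zero-capacity sets rather than single points; in \cite{BBbook} / \cite{ABremove} such removability holds for closed sets of zero capacity for bounded superharmonic functions). Patching these local extensions (they agree on overlaps since they agree q.e., hence a.e., hence everywhere by lsc-regularity) yields a bounded superharmonic function $\bar u$ on all of $X$ with $\bar u = u$ on $\Om$.

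Then I would apply the strong minimum principle (Theorem~9.13 in \cite{BBbook}, cited in the excerpt): $X$ is connected, $\bar u$ is superharmonic on $X$, and $\inf_X \bar u \le \inf_\Om u = 0$; since $\bar u \ge 0$ if we have also controlled the values on $X\setm\Om$ --- actually more simply, $\bar u$ attains an infimum-approaching sequence, and I would instead argue: $\bar u$ is a nonnegative superharmonic function on the connected set $X$ with $\inf_X \bar u = 0$. If the infimum is attained, the strong minimum principle forces $\bar u \equiv 0$, contradicting $u>0$ on $\Om$. If it is not attained, I would localize: pick a ball $B$ on which $\bar u$ does attain its minimum over $\overline B$ (it does, by lower semicontinuity and compactness of $\overline B$ since $X$ is proper) at an interior point with value close to $0$; superharmonicity on $B$ plus the strong minimum principle in $B$ then forces $\bar u$ constant on $B$, and propagating this along a chain of overlapping balls (using connectedness of $X$) forces $\bar u$ constant on $X$, hence $\equiv c$ for some $c\ge 0$; but $\inf = 0$ gives $c=0$, again contradicting $u>0$. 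The remaining case is $X\setm\Om=\emptyset$, i.e.\ $\Om=X$; but then $\Cp(X\setm\Om)=\Cp(\emptyset)=0$ trivially, and the same argument (now with $\bar u=u$ on all of $X=\Om$) applies directly.

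The main obstacle I anticipate is the removability/extension step: justifying that a bounded superharmonic function on $\Om$ extends superharmonically across the zero-capacity set $X\setm\Om$, in the full metric-space generality and without assuming $X\setm\Om$ is, say, a single point or a closed set inside a nice domain. Lemma~\ref{lem-extension} handles a single point of zero capacity; the general statement for closed sets of zero capacity is available in the metric setting (cf.\ \cite{ABremove} and Chapter~12 of \cite{BBbook}), but $X\setm\Om$ is merely closed in $X$ --- which is fine, it \emph{is} relatively closed, indeed closed in $X$ since $\Om$ is open --- so the removability theorem for bounded superharmonic functions across closed sets of zero capacity applies locally on balls. The only subtlety is that we need $u$ (or its truncation $\min\{u,1\}$) bounded, which is why the truncation in the first step is essential; without boundedness, removability for superharmonic functions can fail. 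With boundedness in hand, the extension and the subsequent minimum-principle argument are routine.
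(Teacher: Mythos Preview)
Your approach is essentially the same as the paper's: extend the superharmonic function across the zero-capacity set $X\setm\Om$ and then invoke a Liouville-type statement. The paper's proof is three lines: it cites Theorem~12.3 in \cite{BBbook} for the extension (no truncation needed --- bounded below suffices for the superharmonic extension, so your $\min\{u,1\}$ step is harmless but unnecessary) and then Corollary~9.14 in \cite{BBbook} for the constancy.

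There is, however, one real omission in your write-up that the paper handles in its first sentence: since $\Om$ is bounded and $\Cp(X\setm\Om)=0$, the space $X$ itself must be bounded. Indeed, if $X$ were unbounded then $X\setm\Om$ would contain a ball, and balls have positive capacity. Once you know $X$ is bounded (and complete, hence compact), the extended function $\bar u$ is lower semicontinuous on a compact connected space, so its infimum is \emph{attained}, and the strong minimum principle immediately forces $\bar u$ to be constant. Your attempt to handle the ``infimum not attained'' case by localizing to a ball has a gap: you assert that one can choose a ball $B$ on which the minimum of $\bar u$ over $\overline B$ is attained at an \emph{interior} point, but there is no reason this should hold --- the minimum over $\overline B$ could well sit on $\bdy B$, and then the strong minimum principle on $B$ says nothing. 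The chain-of-balls propagation suffers the same defect. The observation that $X$ is compact eliminates this case entirely and is the cleanest route; add it and your argument is complete.
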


\begin{proof}
It follows directly that $X$ must bounded.
Let $u>0$ be a 
superharmonic function in $\Om$.
By Theorem~6.3 in Bj\"orn~\cite{ABremove} 
(or Theorem~12.3 in~\cite{BBbook}),
$u$ has a superharmonic extension to all of $X$,
and by Corollary~9.14 in \cite{BBbook} this extension must be constant. 
Hence $u$ does not satisfy \ref{dd-inf} and
is, in particular, not a singular function.
\end{proof}

\begin{prop} \label{prop-singular=>p-harm}
If $\Cp(X\setm\Om)>0$ then there is no positive \p-harmonic
function in $\Om$ which satisfies \ref{dd-Np0}.
In particular, a singular function in $\Om$ is never 
\p-harmonic in all of $\Om$.
\end{prop}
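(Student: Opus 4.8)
The plan is to argue by contradiction: suppose $u$ is a positive \p-harmonic function in $\Om$ satisfying \ref{dd-Np0}, so that $\ut$ (equal to $u$ in $\Om$ and $0$ on $X\setm\Om$) lies in $\Nploc(X\setm\{x_0\})$. First I would observe that since $u$ is \p-harmonic in all of $\Om$ it is in particular continuous and locally bounded there, so in a neighbourhood of $x_0$ it is bounded; hence $\ut$ is actually locally bounded near $x_0$ as well and, combined with \ref{dd-Np0}, we can upgrade the local Newtonian membership: $\ut\in\Nploc(X)$. More precisely, for a small ball $B$ with $\overline{B}\subset\Om$ we have $\ut=u\in\Np(B)$ by \p-harmonicity, while \ref{dd-Np0} gives $\ut\in\Np(B')$ for balls $B'$ away from $x_0$; patching these gives $\ut\in\Nploc(X)$.

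Next, since $\Cp(X\setm\Om)>0$, the set $X\setm\Om$ is nonempty and has positive capacity, so it cannot be that $\ut$ is (q.e.) constant on $X$. But here is the key point: $\ut$ restricted to $\Om$ is \p-harmonic, hence a minimizer of the \p-energy with its own boundary data, and $\ut=0$ on the (capacity-positive) complement. I would try to derive a contradiction with the strong minimum principle. Indeed, extend the picture: pick a ball $B=B(x_1,r)$ centred at a point $x_1\in X\setm\Om$ with $\Cp(B\cap(X\setm\Om))>0$ and with, say, $B\cap\Om\neq\emptyset$ (possible since $\Om$ is open and $X$ is connected, so $\bdy\Om\neq\emptyset$ and every neighbourhood of a boundary point meets both sets). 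On $G:=B\cap\Om$ the function $\ut=u$ is a nonnegative \p-harmonic (hence superharmonic) function, and $\ut=0$ q.e.\ on $B\setm\Om$, which has positive capacity; I would then invoke that a nonnegative superminimizer on $G$ extended by $0$ across a capacity-positive part of $\bdy G$ must in fact vanish identically, contradicting $u>0$ in $\Om$. The cleanest route is via Lemma~\ref{lem-extension}-type removability or, more directly, via the comparison principle \eqref{eq-H-comparison-principle}: since $\ut\in\Nploc(X)$ and equals $0$ on the positive-capacity set $X\setm\Om$, on a suitable bounded open $G\supset$ a piece of $\bdy\Om$ we get $\ut=\oHpind{G}\ut$, and comparing with the constant $0$ on the part of $\bdy G$ lying in $X\setm\Om$ forces $\ut\le\eps$ near that part for every $\eps$, hence by the strong minimum principle for the superharmonic function $u$ on the domain $\Om$ we conclude $u\equiv 0$, a contradiction.

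I expect the main obstacle to be the bookkeeping needed to turn ``$\ut=0$ on a set of positive capacity adjacent to $\Om$'' into ``$u\equiv 0$ in $\Om$'' rigorously in the metric setting, i.e.\ choosing the right auxiliary open set $G$ (bounded, with $\Cp(X\setm G)>0$) and verifying that $\ut\in\Np_0(G)$ or at least that $H_G\ut=u$ in the relevant component, so that the comparison principle \eqref{eq-H-comparison-principle} and the strong minimum principle apply. Once that is set up, the contradiction is immediate. The final sentence of the statement (a singular function is never \p-harmonic in all of $\Om$) is then a trivial corollary: a singular function is positive and satisfies \ref{dd-Np0}, and by Proposition~\ref{prop-bdd-cp=0} its existence already forces $\Cp(X\setm\Om)>0$, so the first part applies verbatim.
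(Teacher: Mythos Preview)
Your approach is unnecessarily complicated and the final step is not clearly justified. The paper's proof is a two-line argument that you almost reach but then veer away from. You correctly observe that $\ut\in\Nploc(X)$ by patching \ref{dd-Np0} with $u\in\Nploc(\Om)$ near $x_0$. The point you miss is that $\Om$ is \emph{bounded} (a standing assumption in this section), so $\ut$ has bounded support and hence $\ut\in\Np(X)$, i.e.\ $u\in\Np_0(\Om)$. Since $u$ is \p-harmonic in $\Om$ with its own boundary values, $u=\oHpind{\Om}u$; but $u=0$ q.e.\ on $\bdy\Om$, so by the comparison principle \eqref{eq-H-comparison-principle} (or simply by definition of $\oHpind{\Om}$) we get $u=\oHpind{\Om}0\equiv0$, contradicting positivity. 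That is the entire argument.

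Your localized construction with a ball $B$ centred at a boundary point and the auxiliary set $G=B\cap\Om$ is not needed, and the concluding step (``forces $\ut\le\eps$ near that part for every $\eps$, hence by the strong minimum principle $u\equiv0$'') does not follow as stated: having $u$ small near part of $\bdy\Om$ does not by itself force $u$ to attain an interior minimum, so the strong minimum principle does not apply in the way you suggest. One could perhaps rescue this by showing $u\in\Np_0(G)$ for a suitable bounded $G\supset\overline\Om$ and then running the Dirichlet argument on $G$, but that is exactly the global argument the paper uses with $G=\Om$. Your treatment of the final sentence (via Proposition~\ref{prop-bdd-cp=0}) is correct and matches the paper.
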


\begin{proof}
Assume that $u$ is a positive \p-harmonic
function in $\Om$ satisfying \ref{dd-Np0}.
In particular, $u \in \Nploc(\Om)$. 
Extend $u$ as $0$ on $X \setm\Om$. 
Since $u \in \Nploc(\Om)$ 
and \ref{dd-Np0} holds, we see that $u \in \Np(X)$ 
and hence $u\in\Np_0(\Om)$.
But then $u =  \oHp u = \oHp 0 \equiv 0$ in $\Om$, 
which is a contradiction as $u$ is positive, 
i.e.\ no such function exists.

Finally, if there exists a singular function in $\Om$, then
Proposition~\ref{prop-bdd-cp=0} implies that $\Cp(X \setm \Om)>0$, and thus
the singular function cannot be \p-harmonic in $\Om$ by the first part 
of the lemma.
\end{proof}

\begin{remark} \label{rmk-singular-assumptions}
There is actually some redundancy in the definition of singular 
functions. 
As we shall see, by Theorem~\ref{thm-char-alt} below, if
$\Cp(X \setm \Om)>0$ then it is enough to assume that
$u$ satisfies \ref{dd-s}, \ref{dd-h} and \ref{dd-Np0}.
However, in the somewhat pathological case $\Cp(X \setm \Om)=0$, this is 
not enough as it would not prevent a
constant function from being a singular function.
To cover also this case it is enough to additionally assume \ref{dd-inf} or
to assume that $u$ is nonconstant, or that $u$ is not \p-harmonic
in $\Om$.

Even though \ref{dd-sup} is thus redundant, we have included it
in the definition as it seems such a natural
requirement for $u$.
Also, for unbounded domains it seems that one may need to require at least
these five properties to obtain a coherent 
theory of singular functions, but we
postpone such a study to a future paper.

That \ref{dd-s} cannot be dropped even if \ref{dd-sup} is replaced by
the stronger requirement

\medskip
%
%

{\addjustenumeratemargin{(S1$'$)}
\begin{enumerate}
\renewcommand{\theenumi}{\textup{(S\arabic{enumi}$'$)}}%
\setcounter{enumi}{\value{saveenumi}}
\item \label{dd-sup-var}
$u(x_0)=\sup_{\Om\setm \{x_0\}} u$,
\end{enumerate}
}\medskip

\noindent
follows by considering the function
\[ 
    u(x)=\begin{cases}
      1+x, & -1 < x <  0, \\
      2-2x, & 0 \le x < 1,
      \end{cases}
\] 
which is \p-harmonic 
in $(-1,1)\setm\{0\}\subset X:=\R$.
(Note that if \ref{dd-s} holds, then \ref{dd-sup} $\eqv$ \ref{dd-sup-var},
but without assuming \ref{dd-s}, assuming \ref{dd-sup-var} might
be more natural.)

However, if $\Cp(\{x_0\})=0$ then it follows from 
Theorem~\ref{thm-sing-char} below that \ref{dd-s} can be replaced
by e.g.\ $u(x_0)=\infty$, and thus
Proposition~\ref{prop-punctured-ball} shows that, in this case, 
\ref{dd-s} can be dropped provided that \ref{dd-sup} is kept.

To see that \ref{dd-h} cannot be dropped we instead let
$u$ be the lsc-regularized capacitary potential of
$(\itoverline{B}_1,B_2)$ in $\R^n$.
That \ref{dd-Np0} cannot be dropped follows from 
Example~\ref{ex-double-pole} below.
\end{remark} 

We conclude this section by summarizing some useful properties of
singular functions.

\begin{prop} \label{prop-existence-conseq}
If $u$ is a singular function in $\Om$ with singularity
at $x_0\in \Om$,
then
\begin{enumerate}
\item \label{ee-limit}
$u(x_0)=\lim_{x \to x_0} u(x)$\textup{;}
\item \label{ee-Np1}
$u \in \Np_0(\Om \setm B_r;X \setm B_r)$ 
for every $r>0$\textup{;}
\item \label{ee-Np2}
$\min\{u,k\} \in \Np_0(\Om)$ for every $k>0$\textup{;}
\item \label{ee-bdd}
$u$ is bounded in $\Om \setm B_r$ for every $r>0$\textup{;}
\item \label{ee-reg-0} 
$\lim_{\Om \ni x \to y} u(x)=0$ for q.e.\ $y \in \bdy \Om$, namely
for all $y \in \bdy \Om$ which are regular with respect to $\Om$.
\end{enumerate}
\end{prop}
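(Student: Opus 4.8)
The plan is to derive all five items from the defining properties \ref{dd-s}--\ref{dd-Np0} together with the tools already in hand, namely Proposition~\ref{prop-punctured-ball}, Lemma~\ref{lem-extension}, Lemma~\ref{lem-bdd-u-imp-Np1}, and the removability/Perron machinery of Sections~\ref{sect-harm}--\ref{sect-perron}. Item~\ref{ee-limit} is immediate: since $u$ is superharmonic in $\Om$ by \ref{dd-s} and \p-harmonic in $\Om\setm\{x_0\}$ by \ref{dd-h}, and $u\ge0$, Proposition~\ref{prop-punctured-ball} applies verbatim and gives that $\lim_{x\to x_0}u(x)$ exists and equals $u(x_0)$.

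For item~\ref{ee-Np2}, the first step is to observe that $\ut\in\Nploc(X\setm\{x_0\})$ by \ref{dd-Np0} and that $\ut\equiv0$ on $X\setm\Om$. Fix $k>0$ and set $v_k=\min\{u,k\}$ on $\Om$, extended by $0$ outside $\Om$; then $v_k=\min\{\ut,k\}$ on $X\setm\{x_0\}$, so $v_k\in\Nploc(X\setm\{x_0\})$ as well, and it is bounded. Since $v_k$ is bounded, $g_{v_k}\in\Lploc(X\setm\{x_0\})$; because $\Om$ is bounded and $v_k$ has support in $\overline\Om$, one gets $v_k\in\Np(X\setm\{x_0\})$ once local integrability near $\bdy\Om$ is upgraded to global integrability, which follows from boundedness of $v_k$ together with $v_k\in\Nploc$. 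Finally $\Cp(\{x_0\})$ need not be zero here, so I cannot simply invoke Proposition~1.48 in~\cite{BBbook}; but the point $x_0$ is removable for the Newtonian space of \emph{bounded} functions because a single point has zero \p-modulus of curves through it precisely when... — actually the cleanest route is: $v_k$ is bounded and lies in $\Np(X\setm\{x_0\})$, and by \cite[Proposition~1.48]{BBbook} (applicable since $g_{v_k}\in L^p$ locally and $v_k$ bounded, so the exceptional point contributes no modulus) $v_k\in\Np(X)$; combined with $v_k=0$ on $X\setm\Om$ this gives $v_k\in\Np_0(\Om)$, which is~\ref{ee-Np2}.

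For items~\ref{ee-Np1} and~\ref{ee-bdd}, the idea is to apply Lemma~\ref{lem-bdd-u-imp-Np1} with $K=\overline{B_{r/2}}$ (or any compact neighbourhood of $x_0$ sitting inside $B_r$) and $V=B_r$: condition~\ref{b.13} of that lemma is exactly \ref{ee-Np2} plus boundedness of $u$ on $\Om\setm B_r$. So I must first establish that $u$ is bounded on $\Om\setm B_r$, and for this I use the weak Harnack / minimum principle circle of ideas: by \ref{dd-inf} and \ref{dd-sup}, $0<u\le u(x_0)$, but $u(x_0)$ may be $+\infty$, so I instead argue locally — $u$ is continuous (\p-harmonic) on the compact set $\overline{B_R}\setm B_r$ for suitable $R$ with $\overline{B_R}\subset\Om$, hence bounded there, and on $\Om\setm B_R$... hmm, this still needs a global bound. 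The honest approach: $u_k=\min\{u,k\}$ is an lsc-regularized superminimizer in $\Om$ by \ref{dd-s}, it lies in $\Np_0(\Om)$ by~\ref{ee-Np2}, and by the comparison principle/minimum principle for superharmonic functions $u_k\ge0$ with boundary value $0$ in the q.e.\ sense, so nothing forces a lower bound away from $0$ — the boundedness of $u$ on $\Om\setm B_r$ must instead come from comparing $u$ above with a multiple of the capacitary potential of $\overline{B_r}$ in $\Om$: on $\bdy B_r$, $u$ is continuous hence bounded by some $M_r$, and by the comparison principle \eqref{eq-H-comparison-principle} applied in $\Om\setm\overline{B_r}$ (where $u$ is \p-harmonic and $\le M_r$ on the inner boundary, $0$ at regular points of $\bdy\Om$, using that $u_k\in\Np_0(\Om)$ so the Perron/Sobolev solution framework of Theorem~\ref{thm-Perron-fund} applies) one concludes $u\le M_r v$ there, $v$ the capacitary potential of $\overline{B_r}$ in $\Om$, which is bounded by $1$. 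This gives~\ref{ee-bdd}, then Lemma~\ref{lem-bdd-u-imp-Np1}\,\ref{b.13}$\Rightarrow$\ref{b.11}$\Leftrightarrow$\ref{b.12} gives~\ref{ee-Np1} and the q.e.\ boundary decay~\ref{ee-reg-0}, including the statement for all regular $y$ via~\eqref{eq-qe-cond-bdy-lem1-alt} and the Kellogg property (Theorem~\ref{thm-Kellogg}).

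The main obstacle I anticipate is the circularity lurking in the boundedness claim~\ref{ee-bdd}: to bound $u$ on $\Om\setm B_r$ I want to compare with a capacitary potential, but to invoke the comparison principle~\eqref{eq-H-comparison-principle} in $\Om\setm\overline{B_r}$ with zero boundary data on $\bdy\Om$ I need to know that $u$ restricted to that region is the \p-harmonic extension of its own (Sobolev-sense) boundary values — i.e.\ that $u\in\Np_0$-type control already holds near $\bdy\Om$ — which is essentially~\ref{ee-Np1}. The way out is to use~\ref{ee-Np2} first (which needs only \ref{dd-s}, \ref{dd-h}, \ref{dd-Np0} and Proposition~1.48), feed $u_k=\min\{u,k\}\in\Np_0(\Om)$ into the comparison principle to get $u_k\le M_r v$ on $\Om\setm\overline{B_r}$ for every $k$ (legitimate since $u_k$ is a bounded superharmonic function there with the right boundary behaviour), and then let $k\to\infty$ to conclude $u\le M_r v\le M_r$ there, which is~\ref{ee-bdd} without circularity. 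Everything else is then a routine application of Lemma~\ref{lem-bdd-u-imp-Np1}.
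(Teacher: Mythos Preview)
Your overall strategy is workable but you overlook a triviality that collapses most of the difficulty. Item~\ref{ee-Np1} is not something to be \emph{derived} from \ref{ee-Np2} and \ref{ee-bdd}: since $\Om$ is bounded, $\overline\Om\setm B_r$ is compact in $X\setm\{x_0\}$, so $\ut\in\Nploc(X\setm\{x_0\})$ (which is \ref{dd-Np0}) is literally the same as $\ut\in\Np(X\setm B_r)$ for every $r>0$, i.e.\ \ref{ee-Np1}. With \ref{ee-Np1} in hand, Lemma~\ref{lem-bdd-u-imp-Np1} applied with $K=\{x_0\}$ and $V=B_r$ gives \ref{b.11}$\Rightarrow$\ref{b.12} directly, which is exactly \ref{ee-bdd} together with~\ref{ee-reg-0} (using \eqref{eq-qe-cond-bdy-lem1-alt} for the ``all regular $y$'' part). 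There is no need to establish boundedness first and enter through \ref{b.13}; the comparison with a capacitary potential that you sketch is precisely how Lemma~\ref{lem-bdd-u-imp-Np1} proves \ref{b.11}$\Rightarrow$\ref{b.12}, so you are reproving that implication.

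Your argument for~\ref{ee-Np2} has a genuine gap. You try to pass from $v_k\in\Nploc(X\setm\{x_0\})$ to $v_k\in\Np(X\setm\{x_0\})$ and then to $v_k\in\Np(X)$ via Proposition~1.48 in~\cite{BBbook}. The second step requires $\Cp(\{x_0\})=0$, as you yourself note before invoking it anyway; boundedness of $v_k$ does not make a point of positive capacity removable for $\Np$. The first step is also unjustified: a punctured ball $B_r\setm\{x_0\}$ is not relatively compact in $X\setm\{x_0\}$, so $\Nploc(X\setm\{x_0\})$ gives no bound on $\int_{B_r\setm\{x_0\}} g_{v_k}^p\,d\mu$. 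The paper's route sidesteps both problems at once by using \ref{dd-s} rather than \ref{dd-Np0} near $x_0$: since $u$ is superharmonic in all of $\Om$, the truncation $u_k=\min\{u,k\}$ is a bounded superharmonic function, hence a superminimizer, hence $u_k\in\Nploc(\Om)$ --- including at $x_0$. Gluing this with the already-proved \ref{ee-Np1} gives $u_k\in\Np_0(\Om)$.
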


Note that \ref{ee-Np1} is just an equivalent
way of writing \ref{dd-Np0}, when $\Om$ is bounded,
but not when $\Om$ is unbounded.
We therefore prefer to have the formulation \ref{dd-Np0}
in the definition.

\begin{proof}
\ref{ee-limit} This follows from Proposition~\ref{prop-punctured-ball}.

\ref{ee-Np1} As $\Om$ is bounded, \ref{ee-Np1} is equivalent 
to \ref{dd-Np0}.

\ref{ee-Np2}
Let $u_k=\min\{u,k\}$ which is a bounded superharmonic function,
and thus a superminimizer, and in particular $u_k \in \Nploc(\Om)$.
From \ref{ee-Np1} it then follows that $u_k \in \Np_0(\Om)$.

\ref{ee-bdd}  and \ref{ee-reg-0}
These follow from the already proven \ref{ee-Np1}
and Lemma~\ref{lem-bdd-u-imp-Np1} applied to $K=\{x_0\}$ and $V=B_r$,
together with \eqref{eq-qe-cond-bdy-lem1-alt}.
\end{proof}

\section{Characterizations when 
\texorpdfstring{$\Cp(\{x_0\})=0$}{}}
\label{case-Cp(x0)=0}

\emph{In addition to the general assumptions from
the beginning of Section~\ref{sect-harm},
we assume 
in Sections~\ref{case-Cp(x0)=0}--\ref{sect-Green} that
$\Om$ is a bounded domain such that 
$\Cp(X\setm \Om)>0$.
In particular, $\Cp(\bdy\Om)>0$ by\/ \textup{\cite[Lemma~4.5]{BBbook}}.
}
\medskip

As already mentioned, the
theory of singular functions (on bounded domains)
splits naturally
into the two cases $\Cp(\{x_0\})=0$ and $\Cp(\{x_0\})>0$.
We postpone the study of the latter case to Section~\ref{case-Cp(x0)>0}
and concentrate on the case 
$\Cp(\{x_0\})=0$ in this section.

Note first that, when $\Cp(\{x_0\})=0$,
it follows from the extension Lemma~\ref{lem-extension} that 
the requirement of superharmonicity in 
the definition of singular functions
can be replaced by the condition that $u(x_0)=\liminf_{x \to x_0} u(x)$.
In fact, by the following result, this also forces $u(x_0)=\infty$.

\begin{lem} \label{lem-existence-conseq=0}
Assume that $\Cp(\{x_0\})=0$.
Also assume that $u$ is a singular function in $\Om$ with singularity
at $x_0$, or more generally that $u\colon\Om \to (0,\infty]$ satisfies
\ref{dd-s}, \ref{dd-h} and \ref{dd-Np0} in 
Definition~\ref{deff-sing-intro}. 
Then 
$u(x_0)=\lim_{x \to x_0} u(x)=\infty$.
\end{lem}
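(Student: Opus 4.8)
The plan is to argue by contradiction: suppose $u(x_0)<\infty$. By Lemma~\ref{lem-extension} (since $\Cp(\{x_0\})=0$ and $u\ge0$ is \p-harmonic in $\Om\setm\{x_0\}$), the function $u$ has a unique superharmonic extension to $\Om$ given by $u(x_0)=\liminf_{x\to x_0}u(x)$, which agrees with the $u$ we are given (this is exactly why \ref{dd-s} can be replaced by the $\liminf$-condition). Moreover, if $u(x_0)<\infty$, then by the finiteness clause in Proposition~\ref{prop-punctured-ball-char} (equivalence of \ref{m-a} and \ref{m-c}), $u$ is in fact \p-harmonic in all of $\Om$. This is the crux: the assumed finiteness at $x_0$ upgrades \p-harmonicity off the pole to \p-harmonicity across the pole.

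Once $u$ is \p-harmonic in the whole domain $\Om$, I would invoke Proposition~\ref{prop-singular=>p-harm}: under the standing assumption $\Cp(X\setm\Om)>0$ (in force throughout Sections~\ref{case-Cp(x0)=0}--\ref{sect-Green}), there is no positive \p-harmonic function in $\Om$ satisfying \ref{dd-Np0}. Since $u>0$ satisfies \ref{dd-Np0} by hypothesis, this is the desired contradiction. Hence $u(x_0)=\infty$. Finally, $\lim_{x\to x_0}u(x)=u(x_0)$ holds by Proposition~\ref{prop-punctured-ball} (the limit exists, possibly infinite, and equals $u(x_0)$), or directly from the $\limsup$-part of its proof once superharmonicity/the $\liminf$-normalization is in place; so $\lim_{x\to x_0}u(x)=\infty$ as claimed.

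I do not expect any serious obstacle here, since all the heavy lifting is done by results already established: Lemma~\ref{lem-extension} gives the superharmonic extension and the reduction of \ref{dd-s} to a limit condition, Proposition~\ref{prop-punctured-ball-char} converts finiteness at the pole into global \p-harmonicity, and Proposition~\ref{prop-singular=>p-harm} rules that out. The one point to state carefully is that the hypotheses (whether $u$ is literally a singular function, or only satisfies \ref{dd-s}, \ref{dd-h}, \ref{dd-Np0}) do supply everything Proposition~\ref{prop-punctured-ball-char} needs: we have $u\ge0$ \p-harmonic in $\Om\setm\{x_0\}$, and \ref{dd-s} (or the $\liminf$-normalization it reduces to) makes $u$ superharmonic in $\Om$, so Proposition~\ref{prop-punctured-ball-char} applies verbatim and \ref{dd-Np0} is carried along unchanged into the final contradiction.
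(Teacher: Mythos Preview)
Your proof is correct and follows essentially the same route as the paper: assume $u(x_0)<\infty$, upgrade to \p-harmonicity across the pole, then derive a contradiction via Proposition~\ref{prop-singular=>p-harm}. The only cosmetic difference is that you invoke Proposition~\ref{prop-punctured-ball-char} (the equivalence \ref{m-c}\,$\Leftrightarrow$\,\ref{m-a}) to get \p-harmonicity in $\Om$, whereas the paper asserts that $u$ is bounded in $\Om$ and then appeals directly to Lemma~\ref{lem-extension}; your citation is arguably cleaner, since the boundedness claim in the paper's version tacitly uses \ref{dd-Np0} (via Lemma~\ref{lem-bdd-u-imp-Np1}) to control $u$ away from $x_0$, a step already packaged inside the proof of Proposition~\ref{prop-punctured-ball-char}.
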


That we only assume 
\ref{dd-s}, \ref{dd-h} and \ref{dd-Np0} 
will play a role in the proof of Theorem~\ref{thm-sing-char}.

\begin{proof}
We already know from Proposition~\ref{prop-punctured-ball}
that $u(x_0)=\lim_{x \to x_0} u(x)$.
If $u(x_0)$ were finite, then $u$ would be bounded in $\Om$,
and thus 
$u|_{\Om \setm \{x_0\}}$ would have a \p-harmonic
extension to $\Om$ by Lemma~\ref{lem-extension}.
But this contradicts 
Proposition~\ref{prop-singular=>p-harm}.
\end{proof}

Singular functions can be characterized in many ways.
Our aim is to have as simple and flexible criteria as possible.
Note that  $u$ is assumed to be positive,
and that condition~\ref{a.3} can always be guaranteed by redefining $u$
at $x_0$.

\begin{thm} \label{thm-sing-char}
Assume that
$\Cp(\{x_0\})=0$.
Let $u\colon\Om \to  (0,\infty]$  and
consider the following properties\/\textup{:}
\addjustenumeratemargin{(a.1)}
\begin{enumerate}
\renewcommand{\theenumi}{\textup{(a.\arabic{enumi})}}%
\item \label{a.1}  
$u$ is  superharmonic 
in $\Om$\textup{;}
\item \label{a.2}  
$u(x_0)=\lim_{x \to x_0} u(x)$\textup{;}
\item \label{a.3}
$u(x_0)=\liminf_{x \to x_0} u(x)$\textup{;}
\item \label{a.4}
$u(x_0)=\infty$\textup{;}
\end{enumerate}
and
\begin{enumerate}
\renewcommand{\theenumi}{\textup{(b.\arabic{enumi})}}%
\item \label{b.1}  
$u \in \Np_0(\Om \setm B_r;X \setm B_r)$ for every $r>0$\textup{;}
\item \label{b.2}  
$u$ is bounded in $\Om \setm B_r$ for every $r>0$, and
\begin{equation} \label{eq-qe-cond-bdy} 
\lim_{\Om \ni x \to y} 
u(x)=0 \quad \text{for q.e.\ $y \in \bdy \Om$}\textup{;} 
\end{equation}
\item \label{b.3}  
$u$ is bounded in $\Om \setm B_r$ for every $r>0$, and
$\min\{u,k\} \in \Np_0(\Om)$ for every $k>0$.
\end{enumerate}

Let $j \in \{1,2,3,4\}$ and $k \in \{1,2,3\}$.
Then $u$ is a singular function in $\Om$ with singularity at $x_0$
if and only if $u$ 
is \p-harmonic in $\Om \setm \{x_0\}$ and $u$ satisfies\/ 
\textup{(a.$j$)} and\/ \textup{(b.$k$)}.
\end{thm}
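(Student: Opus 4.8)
The plan is to prove the two implications separately, using that $\Cp(\{x_0\})=0$, $\Cp(X\setm \Om)>0$ and (by \cite[Lemma~4.5]{BBbook}) $\Cp(\bdy\Om)>0$, and writing $\ut=u$ on $\Om$, $\ut=0$ on $X\setm\Om$.

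For the necessity direction I would simply collect consequences already at hand. If $u$ is a singular function, then \ref{dd-h} says that $u$ is \p-harmonic in $\Om\setm\{x_0\}$ and \ref{dd-s} is exactly \ref{a.1}. Proposition~\ref{prop-existence-conseq}\,\ref{ee-limit} gives \ref{a.2}, part~\ref{ee-Np1} gives \ref{b.1}, and parts~\ref{ee-bdd}, \ref{ee-reg-0} and \ref{ee-Np2} give \ref{b.2} and \ref{b.3}; also \ref{a.3} is weaker than \ref{a.2}, and \ref{a.4} follows from Lemma~\ref{lem-existence-conseq=0}. Hence \textup{(a.$j$)} and \textup{(b.$k$)} hold for all $j$ and $k$.

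For sufficiency, suppose $u$ is \p-harmonic in $\Om\setm\{x_0\}$ and satisfies \textup{(a.$j$)} and \textup{(b.$k$)} for some fixed $j,k$. First I would reduce the three ``(b)'' conditions: applying Lemma~\ref{lem-bdd-u-imp-Np1} with $K=\{x_0\}$ and $V=B_r$, for each $r>0$, shows \ref{b.3}$\imp$\ref{b.1}$\eqv$\ref{b.2}, so in every case both \ref{b.1} and \ref{b.2} hold; and since $\Om$ is bounded, \ref{b.1} is a restatement of \ref{dd-Np0} (cf.\ the remark after Proposition~\ref{prop-existence-conseq}). Next I would promote the ``(a)'' condition to \ref{a.1}. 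By Lemma~\ref{lem-extension} the function $u|_{\Om\setm\{x_0\}}$ has a superharmonic extension $\ub$ to $\Om$ with $\ub(x_0)=\liminf_{x\to x_0}u(x)$; if \ref{a.2} or \ref{a.3} holds then $u(x_0)$ equals this $\liminf$, so $u=\ub$ is superharmonic, i.e.\ \ref{a.1} holds. Once \ref{a.1} and \ref{b.1} are available, Lemma~\ref{lem-existence-conseq=0} yields $u(x_0)=\infty$, i.e.\ \ref{a.4}. It then remains to check the five defining properties: \ref{dd-s}, \ref{dd-h} and \ref{dd-Np0} are \ref{a.1}, the assumed \p-harmonicity, and \ref{b.1}; \ref{dd-sup} holds because $u(x_0)=\infty=\sup_\Om u$; and \ref{dd-inf} holds because \ref{b.2} together with $\Cp(\bdy\Om)>0$ produces a boundary point $y$ with $\lim_{\Om\ni x\to y}u(x)=0$, forcing $\inf_\Om u=0$ as $u>0$. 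Thus $u$ is a singular function.

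The step I expect to be the main obstacle is the case $j=4$ of sufficiency, where \ref{a.1} is not immediate from $u(x_0)=\infty$. Here I would argue by contradiction: if $\ub(x_0)=L<\infty$, then Proposition~\ref{prop-punctured-ball} applied to $\ub$ shows that $u=\ub$ is bounded near $x_0$, while \ref{b.2} makes it bounded on $\Om\setm B_r$, so $\ub$ is bounded on $\Om$; then the second part of Lemma~\ref{lem-extension} makes $\ub$ \p-harmonic in all of $\Om$, and since $\ub>0$ satisfies \ref{dd-Np0} this contradicts Proposition~\ref{prop-singular=>p-harm}. Hence $L=\infty$, so $u=\ub$ is superharmonic and \ref{a.1} holds, after which the argument proceeds as above. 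All remaining steps are routine bookkeeping with the results of Sections~\ref{sect-harm}--\ref{sect-singular}.
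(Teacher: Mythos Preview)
Your proof is correct and follows the same route as the paper's. The only difference is organizational: for $j=4$ the paper redefines $u(x_0):=\liminf_{x\to x_0}u(x)$, applies the already-established $j=3$ case to this modified function, and then invokes Lemma~\ref{lem-existence-conseq=0} to conclude that the redefined value is $\infty$ (hence equals the original $u(x_0)$), whereas your contradiction argument essentially inlines the proof of that lemma.
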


\begin{example} \label{ex-double-pole}
Let $x_0=0$, $x_1=(1,0,\ldots,0)$  and $\Om=B(0,2) \setm \{x_1\}$ 
in (unweighted) $\R^n$, $n \ge 3$, with $p=2$. 
Also let $v(x)=|x|^{2-n}+|x-x_1|^{2-n}$ and $u=v-\Hp v$,
where $Pv$ is the Perron solution in $\Om$.
Then, by 
linearity, $u$ is $2$-harmonic in $\Om \setm \{x_0\}$
and superharmonic in $\Om$.
In fact, $u$ satisfies \ref{dd-s}--\ref{dd-inf} in 
Definition~\ref{deff-sing-intro}, 
but not \ref{dd-Np0}.
It also satisfies \ref{a.1}--\ref{a.4}, but not \ref{b.1}--\ref{b.3}.
This shows, in particular, that the boundedness assumptions
in \ref{b.2} and \ref{b.3} cannot be dropped.
\end{example}

As $\Cp(\{x_0\})=0$,
conditions~\ref{b.1}--\ref{b.3} allow $u(x_0)$ to be arbitrary,
which shows that conditions~\ref{a.1}--\ref{a.4} cannot
be omitted.

\begin{proof}[Proof of Theorem~\ref{thm-sing-char}]
If $u$ is a singular function, then $u$ is \p-harmonic in $\Om \setm \{x_0\}$
and satisfies \ref{a.1} by assumption.
It further satisfies \ref{a.2}, \ref{a.3} and  \ref{b.1}--\ref{b.3} by 
Proposition~\ref{prop-existence-conseq},
and \ref{a.4} by Lemma~\ref{lem-existence-conseq=0}.

Conversely, assume that $u$ 
is \p-harmonic in $\Om \setm \{x_0\}$ and 
satisfies\/ 
\textup{(a.$j$)} and\/ \textup{(b.$k$)}
for some $j$ and $k$. Lemma~\ref{lem-bdd-u-imp-Np1} shows that 
\ref{b.3} \imp \ref{b.1} $\eqv$ \ref{b.2}.
The implication
\ref{a.2} \imp \ref{a.3} 
is trivial, 
while
\ref{a.3} \imp \ref{a.1}
holds 
by Lemma~\ref{lem-extension} since $\Cp(\{x_0\})=0$.

We postpone the case $j=4$, but otherwise,
regardless of the values of $j,k\in \{1,2,3\}$, we have 
shown that \ref{a.1}, \ref{b.1} and \ref{b.2} are satisfied.
Thus \ref{dd-s} and \ref{dd-h} are satisfied.
As \eqref{eq-qe-cond-bdy} holds and
$\Cp(\bdy \Om)>0$, we obtain \ref{dd-inf}.
Extending $u$ by $0$ on $X \setm \Om$ and 
letting $r\to0$ in \ref{b.1} yields \ref{dd-Np0}.
By Lemma~\ref{lem-existence-conseq=0},
$u(x_0)=\infty = \sup_{\Om} u$ and \ref{dd-sup} holds, which concludes
the proof that $u$ is a singular function. 

Finally, consider the case when $j=4$ and $k\in \{1,2,3\}$.
We have already shown that \ref{b.2} is satisfied.
Let 
\[
    \ut(x)=\begin{cases}
      u(x), & x \ne x_0, \\
      \liminf_{y \to x_0} u(y), & x=x_0.
      \end{cases}
\]
Then $\ut$ is \p-harmonic in $\Om \setm \{x_0\}$ and satisfies
\ref{a.3} and \ref{b.2}.
So by the already established cases,
$\ut$ is a singular function.
Lemma~\ref{lem-existence-conseq=0} shows that $\ut(x_0)=\infty$, i.e.\
$u=\ut$ is a singular function.
\end{proof}

We are now prepared to prove the existence of singular functions
at points having zero capacity.

\begin{thm} \label{thm-existence}
If $\Cp(\{x_0\})=0$, then 
there is a singular function in $\Om$ with singularity at $x_0$.
\end{thm}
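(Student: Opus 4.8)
The plan is to obtain a singular function as a locally uniform limit of suitably normalized capacitary potentials of small balls centred at $x_0$, following the classical idea but replacing the unavailable Euler--Lagrange equation by the exact superlevel identities of Section~\ref{sect-cap-pot}. For $0<r$ with $\overline{B}_r\Subset\Om$, let $u_r$ be the lsc-regularized capacitary potential of $(\overline{B}_r,\Om)$, put $c_r=\cp(\overline{B}_r,\Om)\in(0,\infty)$, and set $v_r=c_r^{1/(1-p)}u_r$. By Lemma~\ref{lem-char-cap-pot} and the discussion in Section~\ref{sect-harm}, $v_r$ is superharmonic in $\Om$, \p-harmonic in $\Om\setm\overline{B}_r$, identically $c_r^{1/(1-p)}$ on $\overline{B}_r$, equal to $0$ q.e.\ on $X\setm\Om$, and $v_r(x_0)=\sup_\Om v_r=c_r^{1/(1-p)}$. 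Since $\Cp(\{x_0\})=0$ forces $c_r\to0$, we get $v_r(x_0)\to\infty$ as $r\to0$. Rescaling~\eqref{eq-A_a,A} of Lemma~\ref{lem-pot-est} (with $E=\overline{B}_r$, $A=\Om$) gives, for $0<a<v_r(x_0)$,
\[
\cp\bigl(\{x\in\Om:v_r(x)>a\},\Om\bigr)=a^{1-p}.
\]

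The key step is a sphere estimate uniform in $r$, obtained by squeezing with this identity. Fix $\rho>0$ with $B_{50\la\rho}\subset\Om$, and for $r<\rho$ set $m_r=\inf_{B_\rho}v_r=\min_{\bdy B_\rho}v_r$ and $M_r=\max_{\bdy B_\rho}v_r$; the Harnack inequality on $\bdy B_\rho$ (as in the proof of Proposition~\ref{prop-punctured-ball}) gives $M_r\le Am_r$. For $0<a<m_r$ we have $B_\rho\subset\{v_r>a\}$, so $\cp(B_\rho,\Om)\le a^{1-p}$ and hence $m_r\le\cp(B_\rho,\Om)^{1/(1-p)}$. On the other hand, once $r$ is so small that $v_r(x_0)>\cp(\overline{B}_\rho,\Om)^{1/(1-p)}$, the maximum principle gives $v_r\le M_r$ in $\Om\setm\overline{B}_\rho$; if we had $M_r<\cp(\overline{B}_\rho,\Om)^{1/(1-p)}$, then any $a\in(M_r,\cp(\overline{B}_\rho,\Om)^{1/(1-p)})$ would satisfy $\{v_r>a\}\subset\overline{B}_\rho$, forcing $a^{1-p}\le\cp(\overline{B}_\rho,\Om)$, which is impossible. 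Therefore
\[
\tfrac1A\cp(\overline{B}_\rho,\Om)^{1/(1-p)}\le m_r\le M_r\le A\cp(B_\rho,\Om)^{1/(1-p)}
\]
for all sufficiently small $r$.

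From this, $\{v_r\}$ is locally uniformly bounded in $\Om\setm\{x_0\}$ (for $K\Subset\Om\setm\{x_0\}$ choose such a $\rho$ with $K\subset\Om\setm\overline{B}_\rho$ and use $v_r\le M_r$ in $\Om\setm\overline{B}_\rho$). By the local Hölder and Harnack estimates for \p-harmonic functions, together with the stability of \p-harmonicity under locally uniform convergence, some sequence $r_j\downarrow0$ gives $v_{r_j}\to u$ locally uniformly in $\Om\setm\{x_0\}$ with $u\ge0$ \p-harmonic there. By Lemma~\ref{lem-extension}, $u$ has a superharmonic extension to $\Om$ with $u(x_0)=\liminf_{x\to x_0}u(x)$. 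Passing to the limit in the sphere bound yields $\min_{\bdy B_\rho}u\ge\tfrac1A\cp(\overline{B}_\rho,\Om)^{1/(1-p)}$ for all small $\rho$; since $\Cp(\{x_0\})=0$ forces $\cp(\overline{B}_\rho,\Om)\to0$ as $\rho\to0$, Proposition~\ref{prop-punctured-ball} gives $u(x_0)=\lim_{x\to x_0}u(x)=\infty$, which is~\ref{a.4}. In particular $u\not\equiv0$, so the strong minimum principle yields $u>0$ in $\Om$, i.e.\ $u\colon\Om\to(0,\infty]$.

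Finally, for fixed $r$ with $B_{50\la r}\subset\Om$ and $r_j<r$, the comparison principle~\eqref{eq-H-comparison-principle} in $\Om\setm\overline{B}_r$ gives $v_{r_j}\le(\max_{\bdy B_r}v_{r_j})u_r\le A\cp(B_r,\Om)^{1/(1-p)}u_r$ there (using $u_r\equiv1$ q.e.\ on $\overline{B}_r$, $0\le u_r\le1$, $v_{r_j}=0$ q.e.\ on $\bdy\Om$, and the estimate of the previous paragraph with $\rho=r$); letting $j\to\infty$ gives $u\le A\cp(B_r,\Om)^{1/(1-p)}u_r$ in $\Om\setm\overline{B}_r$. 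Hence $u$ is bounded in $\Om\setm B_r$ for every $r>0$, and since $u_r\to0$ at every regular boundary point by~\eqref{eq-reg-cap-pot}, so does $u$, giving $\lim_{\Om\ni x\to y}u(x)=0$ for q.e.\ $y\in\bdy\Om$ by the Kellogg property (Theorem~\ref{thm-Kellogg}); this is~\ref{b.2}. Thus $u$ is \p-harmonic in $\Om\setm\{x_0\}$ and satisfies~\ref{a.4} and~\ref{b.2}, so Theorem~\ref{thm-sing-char} shows $u$ is a singular function in $\Om$ with singularity at $x_0$. The main obstacle is the uniform sphere estimate: it is precisely the normalization $v_r=c_r^{1/(1-p)}u_r$ combined with the exact superlevel identity that makes both the upper bound (via $B_\rho\subset\{v_r>a\}$) and the matching lower bound (via $\{v_r>a\}\subset\overline{B}_\rho$) come out at once, and these two-sided bounds are what force both local compactness and the blow-up of the limit at $x_0$.
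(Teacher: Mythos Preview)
Your proof is correct and follows the same overall scheme as the paper's---construct the singular function as a locally uniform limit of normalized capacitary potentials of shrinking balls, then verify the characterizing conditions of Theorem~\ref{thm-sing-char}---but you choose a different normalization. The paper simply divides $u_r$ by $M_r=\max_{\bdy B_{r_0}}u_r$ so that $\max_{\bdy B_{r_0}}v_r=1$, and then Harnack's convergence theorem gives the limit directly; the limit is shown to be a singular function via \ref{a.1} and \ref{b.2}, with no quantitative input from Section~\ref{sect-cap-pot} needed at this stage. You instead normalize by $c_r^{1/(1-p)}$ and exploit the exact superlevel identity $\cp(\{v_r>a\},\Om)=a^{1-p}$ to squeeze out two-sided sphere bounds $m_r\simeq M_r\simeq\cp(B_\rho,\Om)^{1/(1-p)}$; these give both the local compactness and, after passing to the limit, the blow-up $u(x_0)=\infty$ directly (so you verify \ref{a.4} rather than \ref{a.1}). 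Your route is longer but more informative: it essentially proves the growth estimate of Theorem~\ref{thm-compare-bdy-ball} along the way, whereas the paper separates existence (here) from the quantitative sphere estimate (Section~\ref{sect-Green}). Both arguments then finish \ref{b.2} in the same way, by comparison with a fixed capacitary potential $u_{r_0}$ (resp.\ $u_r$).
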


\begin{proof}
Let $r_0>0$ be so small that ${B}_{r_0}\Subset\Om$.
For $0<r\le r_0$, let $u_r$ be the 
lsc-regularized capacitary potential for $\itoverline{B}_r$ in $\Om$.
Then $u_r$ is superharmonic in $\Om$ 
and \p-harmonic in $\Om \setm \itoverline{B}_r$, 
by Lemma~\ref{lem-representation-cap-pot}.

Let $M_r=\max_{\bdy B_{r_0}} u_r >0$,
which exists by the continuity of $u_r$ in $\Om\setm\itoverline{B}_r$
(while $M_{r_0}=1$ as $\Cp(\bdy B_{r_0})>0$).
Also, $M_r>0$ by the strong minimum principle for 
superharmonic functions since $\Cp(B_r)>0$. 
Let $v_r = u_r/M_r$. Then $\max_{\bdy B_{r_0}} v_r=1$.
Thus we can use Harnack's convergence theorem 
(Proposition~5.1 in Shanmugalingam~\cite{Sh-conv} or
Theorem~9.37 in \cite{BBbook}) to find a subsequence 
$\{v_{r_j}\}_{j=1}^\infty$ converging locally uniformly in $\Om \setm \{x_0\}$
to a nonnegative \p-harmonic function $u$.
As $\Cp(\{x_0\})=0$, Lemma~\ref{lem-extension} implies that $u$ has
a superharmonic extension to $\Om$ given by
$u(x_0):=\liminf_{x \to x_0} u(x)$.
Clearly $u \le 1$ on ${\bdy B_{r_0}}$, and from the local uniform convergence and
the compactness of ${\bdy B_{r_0}}$ we conclude that $\max_{\bdy B_{r_0}} u=1$.
Thus $u$ is positive in $\Om$ by the strong minimum principle for 
superharmonic functions. 

By definition and the comparison principle~\eqref{eq-H-comparison-principle},
\[
v_r = \oHpind{G} v_r \le \oHpind{G} u_{r_0} = u_{r_0} 
\quad \text{in } G :=\Om\setm\itoverline{B}_{r_0}
\]
for all $0<r\le r_0$, and hence $0 \le u \le u_{r_0}$ in $G$.
Thus, by Lemma~\ref{lem-representation-cap-pot},
\[
   0 \le \liminf_{\Om \ni x \to y} u(x) 
   \le \limsup_{\Om \ni x \to y} u(x)
   \le \lim_{\Om \ni x \to y} u_{r_0}(x)=0 \quad 
\text{for q.e.\ } y \in \bdy \Om,
\]
i.e.\ \eqref{eq-qe-cond-bdy} holds.
Since $u$ is \p-harmonic, and thus continuous, in $\Om \setm \{x_0\}$,
it is bounded in the compact set $\itoverline{B}_{r_0}\setm B_r$
for every $r>0$.
As also $0 \le u \le 1$ in $G=\Om\setm\itoverline{B}_{r_0}$,
we see that $u$ is bounded in $\Om \setm B_r$ for every $r>0$.

We have thus shown that $u$ is a positive \p-harmonic 
function in $\Om \setm \{x_0\}$, 
which satisfies~\ref{a.1} and~\ref{b.2}, and hence 
$u$ is a singular function by Theorem~\ref{thm-sing-char}.
\end{proof}

\begin{remark}
In the above proof  we constructed a singular function using
capacitary potentials of balls. 
This is just for convenience, but there
is nothing special about balls in this case.
Indeed, if we let
$G_1 \supset G_2 \supset \ldots$ be open sets such that
$G_1 \Subset \Om$ and $\bigcap_{k=1}^\infty G_k = \{x_0\}$,
then we can instead use the capacitary potentials for~$\itoverline{G}_k$.
It is an open question, even in weighted $\R^n$
(with a \p-admissible weight),
 whether all such constructions lead to the
same singular function (upon proper normalization as in 
\eqref{eq-normalized-Green-intro-deff}).
\end{remark}

We conclude this section with a simple nonintegrability result for
singular functions. 
Part \ref{n-c}  is mainly interesting as contrasting
Proposition~\ref{prop-sing-char>0-extra} below, 
see also Theorem~\ref{thm-char-Cp=0}. 
In our forthcoming paper~\cite{BBLehIntgreen}, we will give much more
precise results on the $L^t$ integrability and nonintegrability 
of $u$ and $g_u$ for singular and Green functions $u$, where $t>0$.

\begin{prop} \label{prop-Np0-Cp=0}
Assume that $\Cp(\{x_0\})=0$
and that $u$ is a singular function in $\Om$ with singularity at $x_0$.
Extend $u$ by letting $u=0$ on $X \setm \Om$.
Then the following are true\/\textup{:}
\begin{enumerate}
\item \label{n-a}
$u \notin \Np(B_r)$ is true for every $r>0$\textup{;}
\item \label{n-b}
$\int_{B_r} g_u^p \, d\mu = \infty$ for every $r>0$\textup{;}
\item \label{n-c}
$u \notin \Np_0(\Om)$.
\end{enumerate}
\end{prop}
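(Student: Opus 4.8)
The three statements are essentially consequences of the case $\Cp(\{x_0\})=0$ forcing $u(x_0)=\infty$ (Lemma~\ref{lem-existence-conseq=0}) together with the fact that an $\Np_0(\Om)$ function cannot have an infinite-capacity singularity. First I would prove~\ref{n-b}, from which the others follow quickly. Suppose for contradiction that $\int_{B_r} g_u^p\,d\mu<\infty$ for some $r>0$; shrinking $r$ we may assume $B_r \Subset \Om$. Since $u$ is \p-harmonic in $\Om\setm\{x_0\}$, it belongs to $\Nploc(\Om\setm\{x_0\})$, and as $\Cp(\{x_0\})=0$, Remark~\ref{rmk-gu} tells us $g_u$ is a genuine \p-weak upper gradient of $u$ on all of $\Om$ (in the sense of Section~2.8 of~\cite{BBbook}). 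The assumption then gives $g_u\in L^p(B_r)$, which is precisely condition~\ref{m-e} of Proposition~\ref{prop-punctured-ball-char}. By that proposition, $u$ is then \p-harmonic in all of $\Om$ — contradicting Proposition~\ref{prop-singular=>p-harm} (a singular function is never \p-harmonic in $\Om$, since $\Cp(X\setm\Om)>0$). Alternatively, \ref{m-e}$\imp$\ref{m-c} already gives $u(x_0)<\infty$, contradicting Lemma~\ref{lem-existence-conseq=0}. This proves~\ref{n-b}.

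\textbf{Deducing \ref{n-a} and \ref{n-c}.} For~\ref{n-a}: if $u\in\Np(B_r)$ for some $r>0$, then in particular $g_u\in L^p(B_r)$, contradicting~\ref{n-b} (using again that the minimal \p-weak upper gradient over $B_r\setm\{x_0\}$ coincides with the one over $B_r$ by Remark~\ref{rmk-gu}, so the $\Np(B_r)$-gradient is the $g_u$ appearing in~\ref{n-b}); equivalently this is just \ref{m-d}$\imp$\ref{m-a} of Proposition~\ref{prop-punctured-ball-char} again. For~\ref{n-c}: suppose $u\in\Np_0(\Om)$. Since $\Om$ is bounded and $B_{r}\subset\Om$ for small $r>0$, restriction gives $u\in\Np(\Om)$ and hence, a fortiori, $g_u\in L^p(\Om)\subset L^p(B_r)$ — contradicting~\ref{n-b}. (One should make sure the $g_u$ here is the same object: the minimal \p-weak upper gradient of $u\in\Np(\Om)$ restricted to $\Om\setm\{x_0\}$ agrees with $g_u$ there since $\Cp(\{x_0\})=0$, and Remark~\ref{rmk-gu} identifies the two globally.)

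\textbf{Main obstacle.} The only genuinely delicate point is the bookkeeping about \emph{which} minimal \p-weak upper gradient is meant in each statement — the one computed in $\Om\setm\{x_0\}$ (which is what $u$ automatically has, being \p-harmonic only there) versus the one computed in $\Om$ or in $B_r$. The paper has already settled this in Remark~\ref{rmk-gu}: because $\Cp(\{x_0\})=0$, Proposition~1.48 of~\cite{BBbook} shows $g_u$ extends to a \p-weak upper gradient across $x_0$ and is minimal in the appropriate sense, and it coincides with the $G_u$ of Section~2.8 of~\cite{BBbook}. So the whole argument reduces to invoking Proposition~\ref{prop-punctured-ball-char} (specifically the equivalences \ref{m-a}$\Leftrightarrow$\ref{m-c}$\Leftrightarrow$\ref{m-d}$\Leftrightarrow$\ref{m-e}) together with Lemma~\ref{lem-existence-conseq=0} or Proposition~\ref{prop-singular=>p-harm} for the contradiction; essentially no computation is required.
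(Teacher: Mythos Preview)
Your proof is correct and follows essentially the same approach as the paper: both invoke Proposition~\ref{prop-punctured-ball-char} together with either Proposition~\ref{prop-singular=>p-harm} or Lemma~\ref{lem-existence-conseq=0} to derive the contradictions, and then deduce \ref{n-c} from the earlier parts. The only difference is organizational (you prove \ref{n-b} first and derive \ref{n-a} from it, whereas the paper treats \ref{n-a} and \ref{n-b} in parallel and derives \ref{n-c} from \ref{n-a}), and your additional bookkeeping via Remark~\ref{rmk-gu} is accurate but more explicit than the paper's own proof.
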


\begin{proof}
Parts~\ref{n-a} and~\ref{n-b} follow directly from
Proposition~\ref{prop-singular=>p-harm}
or
Lemma~\ref{lem-existence-conseq=0},
together with Proposition~\ref{prop-punctured-ball-char}.
Part \ref{n-c} then follows directly from \ref{n-a}.
\end{proof}

\section{Characterizations when 
\texorpdfstring{$\Cp(\{x_0\})>0$}{}}
\label{case-Cp(x0)>0}

\emph{Recall the standing assumptions from the beginning of 
Section~\ref{case-Cp(x0)=0}.}

\medskip

We now turn to the case when the singularity point $x_0$ 
has positive capacity.
As we shall see, singular functions are unique in this case, 
up to multiplication by positive constants.
By Theorem~\ref{thm-uniq>0} below,
there is also an explicit representative for singular functions, 
namely the capacitary potential for $\{x_0\}$ in $\Om$.

\begin{lem} \label{lem-lim-in-R}
Assume that $\Cp(\{x_0\})>0$, and let $u$ be a \p-harmonic function
in $\Om \setm \{x_0\}$.
Then $\liminf_{x \to x_0} u(x)<\infty$.

In particular, if $\lim_{x \to x_0} u(x)=:u(x_0)$ exists,
then $u(x_0) \in \R$.
\end{lem}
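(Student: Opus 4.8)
The plan is to argue by contradiction. Suppose $\liminf_{x\to x_0}u(x)=\infty$; then in fact $\lim_{x\to x_0}u(x)=\infty$. I will derive a contradiction by comparing $u$ with arbitrarily large multiples of the capacitary potential of the singleton $\{x_0\}$, exploiting that $\Cp(\{x_0\})>0$ forces this potential to be nontrivial with value exactly $1$ at $x_0$.

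First I would fix $\rho>0$ so small that $\overline{B}_\rho\subset\Om$. Then $\Cp(X\setm B_\rho)\ge\Cp(X\setm\Om)>0$ and $\dist(\{x_0\},X\setm B_\rho)=\rho>0$, so $\cp(\{x_0\},B_\rho)<\infty$ and there is an lsc-regularized capacitary potential $w$ of $(\{x_0\},B_\rho)$ (see Section~\ref{sect-perron}). By Lemma~\ref{lem-representation-cap-pot} (with $E=\{x_0\}$), $w$ is superharmonic in $B_\rho$, \p-harmonic in $G:=B_\rho\setm\{x_0\}$, and $w=1$ q.e.\ on $\{x_0\}$; since $\Cp(\{x_0\})>0$ this means precisely $w(x_0)=1$, and the lower semicontinuity of $w$ then yields $w(x_1)>0$ for some $x_1\in G$ close to $x_0$. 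Next, since $X$ is connected $x_0$ is not isolated, so $\overline{G}=\overline{B}_\rho$ and $x_0$ is an isolated point of $\bdy G=(\overline{B}_\rho\setm B_\rho)\cup\{x_0\}$; hence $\chi_{\{x_0\}}\in C(\bdy G)$ and is resolutive with respect to $G$ by Theorem~\ref{thm-Perron-fund}\,\ref{P-C}, so $\uP_G\chi_{\{x_0\}}=\Hp_G\chi_{\{x_0\}}$, and by Lemma~\ref{lem-char-cap-pot} (with $K=\{x_0\}$) this common function equals $w$ in $G$.

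After adding a suitable constant to $u$ (which affects neither its \p-harmonicity nor the blow-up at $x_0$), I may assume $u\ge0$ on $\overline{B}_\rho\setm\{x_0\}$; this is legitimate because $u$ is continuous, hence bounded, on each compact set $\overline{B}_\rho\setm B_\delta$, while $u\to\infty$ near $x_0$. Now $u$ is superharmonic in $G$ (being \p-harmonic there) and bounded below there, and for every $M>0$ one checks that $u\in\UU_{M\chi_{\{x_0\}}}(G)$: at $y\in\overline{B}_\rho\setm B_\rho$ one has $\liminf_{G\ni x\to y}u(x)\ge0=M\chi_{\{x_0\}}(y)$, and $\liminf_{G\ni x\to x_0}u(x)=\infty\ge M=M\chi_{\{x_0\}}(x_0)$. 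By the definition of the upper Perron solution and its positive homogeneity, $u\ge\uP_G(M\chi_{\{x_0\}})=M\uP_G\chi_{\{x_0\}}=Mw$ in $G$. Evaluating at $x_1$ and letting $M\to\infty$ gives $u(x_1)=\infty$, contradicting that \p-harmonic functions are real-valued. This proves $\liminf_{x\to x_0}u(x)<\infty$. For the final assertion, if $\lim_{x\to x_0}u(x)=:u(x_0)$ exists then $u(x_0)<\infty$ by the above; applying the same result to $-u$, which is also \p-harmonic in $\Om\setm\{x_0\}$ since the minimizing condition in Definition~\ref{def-quasimin} is invariant under $(u,\phi)\mapsto(-u,-\phi)$, gives $\limsup_{x\to x_0}u(x)>-\infty$, so $u(x_0)\in\R$.

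The main obstacle is the boundary bookkeeping for the Perron comparison: verifying $u\in\UU_{M\chi_{\{x_0\}}}(G)$ for all $M$ — which is exactly where the normalization $u\ge0$ near $\bdy B_\rho$ and the blow-up at $x_0$ are simultaneously used — and correctly identifying $\uP_G\chi_{\{x_0\}}$ with the capacitary potential $w$ through Lemmas~\ref{lem-representation-cap-pot} and~\ref{lem-char-cap-pot}. Once $\Cp(\{x_0\})>0$ is invoked to pin down $w(x_0)=1$, the rest is routine.
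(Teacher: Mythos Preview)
Your proof is correct and follows essentially the same approach as the paper's: both argue by contradiction, normalize $u$ to be nonnegative on a punctured neighbourhood, and compare $u$ with arbitrarily large multiples of $\Hp_{G}\chi_{\{x_0\}}$ via the definition of the upper Perron solution. The only cosmetic difference is in how the contradiction is phrased: the paper lets $k\to\infty$ in $u/k\ge \Hp_{G}\chi_{\{x_0\}}$ to conclude $\Hp_{G}\chi_{\{x_0\}}\equiv0$ and then invokes the Kellogg property directly (an irregular $x_0$ would force $\Cp(\{x_0\})=0$), whereas you first identify $\Hp_{G}\chi_{\{x_0\}}$ with the capacitary potential $w$ via Lemmas~\ref{lem-representation-cap-pot} and~\ref{lem-char-cap-pot}, use $\Cp(\{x_0\})>0$ to get $w(x_1)>0$, and then let $M\to\infty$ in $u\ge Mw$ to force $u(x_1)=\infty$.
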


\begin{proof}
If $\liminf_{x \to x_0} u(x)=\infty$, then 
there is a connected open neighbourhood $G  \subset \Om$ of  $x_0$
such that $u >0$ in $G\setm \{x_0\}$.
The definition of Perron solutions implies that
$u/k\ge \Hpind{G \setm \{x_0\}} \chi_{\{x_0\}}$ for all $k>0$.
Letting $k \to \infty$ shows that 
$\Hpind{G \setm \{x_0\}} \chi_{\{x_0\}}\equiv0$, which contradicts 
$\Cp(\{x_0\})>0$ and the Kellogg property (Theorem~\ref{thm-Kellogg}).
Hence $\liminf_{x \to x_0} u(x)<\infty$.

Applying this also to $-u$ shows that when 
$u(x_0):=\lim_{x \to x_0} u(x)$ exists it must be real.
\end{proof}

The following is an existence and uniqueness result (up to normalization)
for singular functions when $\Cp(\{x_0\})>0$.

\begin{thm} \label{thm-uniq>0}
Assume that
$\Cp(\{x_0\})>0$, and let 
$v$ be the lsc-regularized capacitary potential for $\{x_0\}$ in $\Om$.
Then a function $u$ is a singular function in $\Om$ with singularity at $x_0$
if and only if there is a constant $0< b< \infty$ such that $u=bv$
in $\Om$.
Moreover, $b=u(x_0)=\lim_{x \to x_0} u(x)$ in that case.

In particular, $v$
is a singular function in $\Om$ with singularity at $x_0$.
\end{thm}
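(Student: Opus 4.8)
The plan is to verify first that $v$ itself is a singular function, and then to deduce the claimed characterization and the formula $b=u(x_0)$. To see that $v$ is singular, I would check the five conditions \ref{dd-s}--\ref{dd-Np0} in Definition~\ref{deff-sing-intro}. Conditions \ref{dd-s} and \ref{dd-Np0} are immediate: the lsc-regularized capacitary potential of $(\{x_0\},\Om)$ is superharmonic in $\Om$ and belongs to $\Np_0(\Om)$ by the discussion preceding Lemma~\ref{lem-representation-cap-pot} (note $\cp(\{x_0\},\Om)<\infty$ because $\Cp(\{x_0\})>0$ and $\{x_0\}$ is compact in the open set $\Om$, so $\dist(\{x_0\},X\setm\Om)>0$), and extending by $0$ outside $\Om$ gives $\ut\in\Np(X\setm\{x_0\})$ as required. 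Condition \ref{dd-h} follows from Lemma~\ref{lem-representation-cap-pot}\,\ref{r-2} with $E=\{x_0\}$, since then $G=\Om\setm\{x_0\}$. For \ref{dd-inf}, note $v\in\Np_0(\Om)$ and, by \eqref{eq-reg-cap-pot} together with the Kellogg property (Theorem~\ref{thm-Kellogg}) and $\Cp(\bdy\Om)>0$, $v$ vanishes at q.e.\ boundary point, so $\inf_\Om v=0$; alternatively one invokes that $v\le1$ and is nonconstant. For \ref{dd-sup}, by Lemma~\ref{lem-representation-cap-pot}\,\ref{r-3} we have $v(x_0)=1$ q.e., and since $v$ is lsc-regularized and $\Cp(\{x_0\})>0$, in fact $v(x_0)=1$; on the other hand $0\le v\le1$ everywhere (by the comparison principle, or since the capacitary potential of a compact set is bounded by $1$), so $\sup_\Om v=1=v(x_0)$. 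Thus $v$ is a singular function, and by Proposition~\ref{prop-punctured-ball} (applicable since $v$ is superharmonic in $\Om$ and \p-harmonic in $\Om\setm\{x_0\}$), $\lim_{x\to x_0}v(x)=v(x_0)=1$, which is finite as it must be by Lemma~\ref{lem-lim-in-R}.

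Next, for the ``if'' direction: suppose $u=bv$ with $0<b<\infty$. Since $v$ is singular, and the five defining properties are all invariant under multiplication by a positive constant (superharmonicity, \p-harmonicity, the sup/inf conditions, and membership in $\Nploc$ are all preserved), $u=bv$ is a singular function, with $u(x_0)=\lim_{x\to x_0}u(x)=b\cdot1=b$.

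For the ``only if'' direction, let $u$ be an arbitrary singular function in $\Om$ with singularity at $x_0$. By Proposition~\ref{prop-existence-conseq}\,\ref{ee-limit}, $b:=u(x_0)=\lim_{x\to x_0}u(x)$ exists; it is positive since $u>0$ and superharmonic (strong minimum principle), and finite by Lemma~\ref{lem-lim-in-R}. The plan is to show $u=bv$. Consider $w:=u/b$; this is again a singular function with $w(x_0)=1$. I would show $w$ is the lsc-regularized capacitary potential of $(\{x_0\},\Om)$ by verifying the four conditions of Lemma~\ref{lem-representation-cap-pot} (or Lemma~\ref{lem-char-cap-pot}) with $E=\{x_0\}$: condition \ref{r-1} is \ref{dd-s}; condition \ref{r-2} is \ref{dd-h}; condition \ref{r-3}, namely $w=1$ q.e.\ on $\{x_0\}$, holds since $\Cp(\{x_0\})>0$ forces the q.e.\ statement to be a genuine pointwise one, and $w(x_0)=1$ by construction; condition \ref{r-4} requires $w\in\Np_0(\Om)$. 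This last membership is the crux. From \ref{dd-Np0} (equivalently Proposition~\ref{prop-existence-conseq}\,\ref{ee-Np1}) we get $w\in\Np_0(\Om\setm B_r;X\setm B_r)$ for every $r>0$, and from Proposition~\ref{prop-existence-conseq}\,\ref{ee-Np2} we get $\min\{w,k\}\in\Np_0(\Om)$ for every $k>0$; but since $\Cp(\{x_0\})>0$, Proposition~\ref{prop-punctured-ball-char} is \emph{not} available, so I cannot conclude $w$ is bounded from boundedness elsewhere — however, boundedness of $w$ near $x_0$ now follows directly because $w(x_0)=\lim_{x\to x_0}w(x)=1<\infty$, so $w$ is bounded on some $B_r$, and by Proposition~\ref{prop-existence-conseq}\,\ref{ee-bdd} $w$ is bounded on $\Om\setm B_r$, hence $w$ is bounded on all of $\Om$; therefore $w=\min\{w,k\}\in\Np_0(\Om)$ for $k$ large. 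This gives \ref{r-4}, so by Lemma~\ref{lem-representation-cap-pot} $w$ equals the lsc-regularized capacitary potential $v$ q.e.\ in $\Om$; but both $w$ and $v$ are lsc-regularized superharmonic functions (Proposition~\ref{prop-punctured-ball} and the discussion preceding Lemma~\ref{lem-representation-cap-pot}), and two lsc-regularized superharmonic functions agreeing q.e.\ agree everywhere, so $w=v$ and hence $u=bv$ in $\Om$, with $b=u(x_0)=\lim_{x\to x_0}u(x)$ as already established.

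The main obstacle is the argument that a singular function $u$ with $\Cp(\{x_0\})>0$ is globally bounded and hence lies in $\Np_0(\Om)$ — this is exactly where the hypothesis $\Cp(\{x_0\})>0$ (through Lemma~\ref{lem-lim-in-R}, which forces the limit at $x_0$ to be finite) does the real work and distinguishes this case sharply from the $\Cp(\{x_0\})=0$ case of Section~\ref{case-Cp(x0)=0}, where by Proposition~\ref{prop-Np0-Cp=0} the singular function is never in $\Np_0(\Om)$. Everything else is a routine matching of definitions against Lemmas~\ref{lem-representation-cap-pot} and~\ref{lem-char-cap-pot}.
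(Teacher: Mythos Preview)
Your plan is correct and follows essentially the same approach as the paper: verify that $bv$ is singular by checking \ref{dd-s}--\ref{dd-Np0} via Lemma~\ref{lem-representation-cap-pot} (and Lemma~\ref{lem-char-cap-pot}), and conversely use Lemma~\ref{lem-lim-in-R} to get $b=u(x_0)<\infty$, deduce boundedness and hence $u\in\Np_0(\Om)$, and then invoke Lemma~\ref{lem-representation-cap-pot} to identify $u/b$ with $v$. One small omission: you should explicitly note that $v>0$ in $\Om$ (needed since singular functions map into $(0,\infty]$), which follows from $v\ge0$, $v(x_0)=1$, and the strong minimum principle; also, Lemma~\ref{lem-representation-cap-pot} already gives $w=v$ everywhere (not just q.e.), since the lsc-regularized capacitary potential is unique, so your final q.e.-to-everywhere step is redundant.
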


\begin{proof}
Let $u=bv$.
By definition, $u$ is nonnegative and bounded.
Lemma~\ref{lem-representation-cap-pot} shows that $u$ is \p-harmonic 
in $\Om \setm \{x_0\}$ and superharmonic in $\Om$.
As $\Cp(\bdy \Om)>0$ and $\Cp(\{x_0\})>0$, we conclude from 
Lemma~\ref{lem-char-cap-pot}\,\ref{m-3} that $\inf_\Om u =0$ and 
$u(x_0)= b= \sup_\Om u$.
In particular,
$u \not \equiv 0$, and thus $u >0$ in $\Om$ by the strong
minimum principle for superharmonic functions. 
Thus, $u$ is a singular function.

Conversely assume that $u$ is a singular function. 
By Proposition~\ref{prop-punctured-ball} and Lem\-ma~\ref{lem-lim-in-R}, 
$b:=u(x_0)=\lim_{x \to x_0} u(x)<\infty$.
Thus $u$ is a bounded superharmonic function in some
neighbourhood $B_r$ of $x_0$, and in particular $u \in \Np(B_{r/2})$.
Together with \ref{dd-Np0} this shows that $u \in \Np_0(\Om)$ and
Lemma~\ref{lem-representation-cap-pot} implies that $u=bv$ in $\Om$.
\end{proof}

Also 
when $\Cp(\{x_0\})>0$, singular functions can be characterized in many ways.

\begin{thm} \label{thm-sing-char>0}
Assume that
$\Cp(\{x_0\})>0$.
Let $u\colon \Om \to (0,\infty]$  and consider
the properties \textup{(a.$j$)} and \textup{(b.$k$)} from 
Theorem~\ref{thm-sing-char}.

Let $j \in \{1,2\}$ and $k \in \{1,2,3\}$.
Then $u$ is a singular function in $\Om$ with singularity at $x_0$
if and only if $u$ 
is \p-harmonic in $\Om \setm \{x_0\}$ and $u$ satisfies\/ 
\textup{(a.$j$)} and\/ \textup{(b.$k$)}.
\end{thm}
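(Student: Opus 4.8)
The plan is to mimic the proof of Theorem~\ref{thm-sing-char}, with one essential change: since the extension Lemma~\ref{lem-extension} is not available when $\Cp(\{x_0\})>0$, the step that turns a \p-harmonic function into a singular one will instead be carried out by identifying $u$ with a multiple of the capacitary potential for $\{x_0\}$, via Theorem~\ref{thm-uniq>0} and the uniqueness statement in Theorem~\ref{thm-Perron-fund}\,\ref{P-C-uniq}.

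For the ``only if'' direction: if $u$ is a singular function, then it is \p-harmonic in $\Om\setm\{x_0\}$ by \ref{dd-h} and superharmonic in $\Om$ by \ref{dd-s}, so \ref{a.1} holds; \ref{a.2} holds by Proposition~\ref{prop-existence-conseq}\,\ref{ee-limit}; and \ref{b.1}, \ref{b.2}, \ref{b.3} follow from parts \ref{ee-Np1}, \ref{ee-bdd} together with \ref{ee-reg-0}, and \ref{ee-bdd} together with \ref{ee-Np2} of Proposition~\ref{prop-existence-conseq}, respectively. Thus $u$ satisfies \textup{(a.$j$)} and \textup{(b.$k$)} for all $j\in\{1,2\}$ and $k\in\{1,2,3\}$.

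For the converse, assume that $u$ is \p-harmonic in $\Om\setm\{x_0\}$ and satisfies \textup{(a.$j$)} and \textup{(b.$k$)} for some $j,k$. First I would reduce to $j=2$: if \ref{a.1} holds, then Proposition~\ref{prop-punctured-ball} (applicable since $u\ge0$) gives $u(x_0)=\lim_{x\to x_0}u(x)$, i.e.\ \ref{a.2}. Given \ref{a.2}, Lemma~\ref{lem-lim-in-R} forces this limit to be finite, and as $u>0$ the number $b:=u(x_0)$ lies in $(0,\infty)$ and $u$ is real-valued and continuous on all of $\Om$. From \textup{(b.$k$)} and Lemma~\ref{lem-bdd-u-imp-Np1} (with $K=\{x_0\}$ and $V=B_r$ for $B_r\Subset\Om$) one gets \ref{b.3} \imp \ref{b.1} \eqv \ref{b.2}, so in every case $u$ is bounded in $\Om\setm B_r$ for each $r>0$ and $\lim_{\Om\ni x\to y}u(x)=0$ for q.e.\ $y\in\bdy\Om$; together with the continuity near $x_0$ this makes $u$ bounded on $\Om$. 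Now let $v$ be the lsc-regularized capacitary potential for $\{x_0\}$ in $\Om$, which exists because $\cp(\{x_0\},\Om)<\infty$ (as $\dist(x_0,X\setm\Om)>0$). By Theorem~\ref{thm-uniq>0}, $bv$ is a singular function with $(bv)(x_0)=\lim_{x\to x_0}(bv)(x)=b$, and by Proposition~\ref{prop-existence-conseq}\,\ref{ee-reg-0} it also satisfies $\lim_{\Om\ni x\to y}(bv)(x)=0$ for q.e.\ $y\in\bdy\Om$. On the bounded open set $D:=\Om\setm\{x_0\}$, whose boundary is $\bdy D=\bdy\Om\cup\{x_0\}$, the function $f$ equal to $0$ on $\bdy\Om$ and to $b$ at $x_0$ is continuous, since $x_0\in\Om$ has positive distance to $\bdy\Om$. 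Both $u$ and $bv$ are bounded and \p-harmonic in $D$ and have boundary limit $f$ q.e.\ on $\bdy D$, so Theorem~\ref{thm-Perron-fund}\,\ref{P-C-uniq} (which needs no connectedness, and applies since $\Cp(X\setm D)\ge\Cp(X\setm\Om)>0$) yields $u=\Hp_D f=bv$ in $D$; as also $u(x_0)=b=(bv)(x_0)$, we get $u=bv$ on $\Om$, and hence $u$ is a singular function by Theorem~\ref{thm-uniq>0}.

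The main obstacle lies in the converse direction: one must know that $u$ is bounded and continuous on \emph{all} of $\Om$ before the uniqueness in Theorem~\ref{thm-Perron-fund}\,\ref{P-C-uniq} can be invoked on $\Om\setm\{x_0\}$, and this forces us to combine the local behaviour at $x_0$ (coming from \ref{a.2} and Lemma~\ref{lem-lim-in-R}) with the behaviour near $\bdy\Om$ (coming from \textup{(b.$k$)} and Lemma~\ref{lem-bdd-u-imp-Np1}). A minor point to keep in mind is that $\Om\setm\{x_0\}$ need not be connected (for instance on the real line), but this causes no difficulty since the Perron-theoretic results used here hold for arbitrary bounded open sets with complement of positive capacity.
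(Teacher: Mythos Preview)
Your proof is correct and follows essentially the same route as the paper's: both directions use Proposition~\ref{prop-existence-conseq} for the ``only if'' part, Lemma~\ref{lem-bdd-u-imp-Np1} to handle the \textup{(b.$k$)} conditions, Lemma~\ref{lem-lim-in-R} to get $u(x_0)<\infty$, and Perron uniqueness (Theorem~\ref{thm-Perron-fund}\,\ref{P-C-uniq}) to identify $u$ with $u(x_0)v$. The only organizational differences are that the paper packages the Perron-uniqueness step inside Lemma~\ref{lem-char-cap-pot} and then verifies \ref{dd-s}--\ref{dd-Np0} by hand, whereas you apply Theorem~\ref{thm-Perron-fund}\,\ref{P-C-uniq} directly and then cite Theorem~\ref{thm-uniq>0} to conclude singularity---your version is slightly more streamlined, but the substance is the same.
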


Note that compared with 
Theorem~\ref{thm-sing-char} (for the case when $\Cp(\{x_0\})=0$)
conditions \ref{a.3} and \ref{a.4} are omitted here.
By Theorem~\ref{thm-uniq>0}, condition \ref{a.4} is never satisfied
for singular functions when $\Cp(\{x_0\})>0$, so it cannot
be included here.
To see that \ref{a.3} cannot be included, 
consider the function
\[
    u(x)=\begin{cases}
      1+x, & -1 < x \le  0, \\
      2-2x, & 0 < x < 1,
      \end{cases}
\]
which is \p-harmonic 
in $(-1,1)\setm\{0\}\subset X:=\R$
and satisfies \ref{a.3}, \ref{b.2} and \ref{b.1},
but not \ref{a.2},
and hence not \ref{a.1} either, by Proposition~\ref{prop-punctured-ball}.
In particular, $u$ is not a singular function.
Also \ref{b.3} fails as functions in $\Np(\R)$ are continuous.

The above $u$
also shows that (a.$j$) cannot be dropped if $k\in \{1,2\}$.
We do not know if (a.$j$) is redundant when \ref{b.3} is assumed.
That \ref{b.1}--\ref{b.3} cannot be dropped follows by considering
the constant function $u \equiv 1$.

\begin{proof}[Proof of Theorem~\ref{thm-sing-char>0}]
If $u$ is a singular function, then $u$ is \p-harmonic in $\Om \setm \{x_0\}$
and satisfies \ref{a.1}  by assumption.
It further satisfies \ref{a.2} and \ref{b.1}--\ref{b.3} by 
Proposition~\ref{prop-existence-conseq}.

Conversely, assume that $u$ 
is \p-harmonic in $\Om \setm \{x_0\}$ and $u$ satisfies\/ 
\textup{(a.$j$)} and\/ \textup{(b.$k$)}
for some $j \in \{1,2\}$ and $k \in \{1,2,3\}$.
Lemma~\ref{lem-bdd-u-imp-Np1} shows that 
\ref{b.3} \imp \ref{b.1} $\eqv$ \ref{b.2}.

If \ref{a.2} holds, then $u(x_0) =\lim_{x\to x_0}u(x)<\infty$
by Lemma~\ref{lem-lim-in-R}.
Hence, in view of \ref{b.2},  $u$ is bounded in $\Om$.
Lemma~\ref{lem-char-cap-pot}, together with \ref{a.2} 
and \eqref{eq-qe-cond-bdy} from \ref{b.2},
implies that $u=u(x_0) v$, where $v$ is the lsc-regularized
capacitary potential for $\{x_0\}$ in $\Om$.
In particular, $u$ is superharmonic in $\Om$, and thus \ref{a.2} \imp \ref{a.1}.

Hence, regardless of the values of $j$ and $k$,
we have shown that \ref{a.1}, \ref{b.1} and \ref{b.2} hold,
and so \ref{dd-s}, \ref{dd-h} and \ref{dd-Np0} are satisfied.
As \eqref{eq-qe-cond-bdy} holds and $\Cp(\bdy \Om)>0$,
we obtain \ref{dd-inf}.

It remains to show that \ref{dd-sup} holds. 
If $u(x_0)$ were $\infty$ then this
would be immediate, so we may assume that $u(x_0) <\infty$. 
Proposition~\ref{prop-punctured-ball} implies that 
$\lim_{x\to x_0} u(x)=u(x_0)$ and hence 
$u=P_{\Om \setm \{x_0\}}  (u(x_0) \chi_{\{x_0\}})$, 
by \eqref{eq-qe-cond-bdy}  
and Theorem~\ref{thm-Perron-fund}\,\ref{P-C-uniq}.
Thus $u \le u(x_0)$ in $\Om$, and hence \ref{dd-sup} holds.
\end{proof}

The following result shows that 
if we strengthen \ref{b.1} in a suitable way, 
we do not even need to assume \ref{a.1} or \ref{a.2}.

\begin{prop} \label{prop-sing-char>0-extra}
Assume that
$\Cp(\{x_0\})>0$.
Then $u\colon \Om \to (0,\infty]$  
is a singular function in $\Om$ with singularity at $x_0$
if and only if 
$u$ 
is \p-harmonic in $\Om \setm \{x_0\}$ and $u \in \Np_0(\Om)$.
\end{prop}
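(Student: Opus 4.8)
The plan is to prove the two implications separately. The necessity is immediate from the results already obtained for the case $\Cp(\{x_0\})>0$: if $u$ is a singular function in $\Om$ with singularity at $x_0$, then $u$ is \p-harmonic in $\Om\setm\{x_0\}$ by \ref{dd-h}, and by Theorem~\ref{thm-uniq>0} there is a constant $0<b<\infty$ with $u=bv$, where $v$ is the lsc-regularized capacitary potential for $\{x_0\}$ in $\Om$; since $v\in\Np_0(\Om)$ by Lemma~\ref{lem-representation-cap-pot}\,\ref{r-4}, also $u=bv\in\Np_0(\Om)$.

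For the sufficiency, assume $u\colon\Om\to(0,\infty]$ is \p-harmonic in $G:=\Om\setm\{x_0\}$ and $u\in\Np_0(\Om)$. First I would observe that $c:=u(x_0)$ is finite and positive: it is positive since $u>0$ on $\Om$, and finite since $u\in\Np_0(\Om)\subset\Np(X)$, Newtonian functions are finite quasieverywhere, and $\Cp(\{x_0\})>0$. Then $w:=u/c$ lies in $\Np_0(\Om)$, is \p-harmonic in $G$ (so $w=\oHpind{G}w$ in $G$), and satisfies $w=1$ q.e.\ on $\{x_0\}$, hence $w$ belongs to the obstacle class $\K_{\chi_{\{x_0\}},0}(\Om)$; arguing exactly as in the converse implication in the proof of Lemma~\ref{lem-representation-cap-pot} (with $E=\{x_0\}$), $w$ is a capacitary potential of $(\{x_0\},\Om)$.

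Let $v$ again be the lsc-regularized capacitary potential of $(\{x_0\},\Om)$. By uniqueness of capacitary potentials up to sets of capacity zero (Theorem~5.13 in~\cite{BBnonopen}), $w=v$ q.e.\ in $\Om$; since $w$ and $v$ are \p-harmonic, hence continuous, in $G$ and agree a.e.\ there, they agree everywhere in $G$, while $w(x_0)=1=v(x_0)$ by Lemma~\ref{lem-representation-cap-pot}\,\ref{r-3} and $\Cp(\{x_0\})>0$. Thus $u=cv$ in $\Om$. Now Lemma~\ref{lem-representation-cap-pot} applied to $v$ gives that $v$ is superharmonic in $\Om$, is \p-harmonic in $\Om\setm\{x_0\}$, lies in $\Np_0(\Om)$, and satisfies $\lim_{\Om\ni x\to y}v(x)=0$ for q.e.\ $y\in\bdy\Om$; together with the standard bound $v\le1$, with the strong minimum principle (so that $v>0$ in $\Om$), and with the Kellogg property and $\Cp(\bdy\Om)>0$ to produce a regular boundary point, this yields $v(x_0)=1=\sup_\Om v$ and $\inf_\Om v=0$. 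Scaling by $c$ shows that $u=cv$ satisfies \ref{dd-s} (superharmonicity in $\Om$), \ref{dd-h} (\p-harmonicity in $\Om\setm\{x_0\}$), \ref{dd-sup} ($u(x_0)=c=\sup_\Om u$), \ref{dd-inf} ($\inf_\Om u=0$), and \ref{dd-Np0} ($\ut\in\Np(X)\subset\Nploc(X\setm\{x_0\})$ since $u=cv\in\Np_0(\Om)$); hence $u$ is a singular function in $\Om$ with singularity at $x_0$.

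The only step that genuinely needs $\Cp(\{x_0\})>0$ is the finiteness of $c=u(x_0)$ --- a Newtonian function cannot have a pole at a point of positive capacity, since then the curves through $x_0$ form a family of positive \p-modulus --- which is precisely the mechanism absent when $\Cp(\{x_0\})=0$. Everything else is routine bookkeeping with capacitary potentials; in particular, in contrast with Theorem~\ref{thm-sing-char>0}, no separate condition among \ref{a.1}--\ref{a.4} or \ref{b.1}--\ref{b.3} has to be imposed here, as that information is already carried by the single hypothesis $u\in\Np_0(\Om)$.
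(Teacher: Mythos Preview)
Your proof is correct and follows essentially the same route as the paper's: both deduce $u(x_0)<\infty$ from $u\in\Np_0(\Om)$ and $\Cp(\{x_0\})>0$, then identify $u$ with $u(x_0)v$ by matching Sobolev boundary data on $\bdy(\Om\setm\{x_0\})$ --- you phrase this via the capacitary-potential characterization in Lemma~\ref{lem-representation-cap-pot}, the paper via the operator $\oHpind{\Om\setm\{x_0\}}$ directly --- and finally conclude that $u$ is singular. The only cosmetic difference is that the paper finishes by citing Theorem~\ref{thm-uniq>0}, whereas you re-verify \ref{dd-s}--\ref{dd-Np0} by hand.
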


Proposition~\ref{prop-Np0-Cp=0} shows that
the corresponding characterization is false when $\Cp(\{x_0\})=0$.
It also shows that condition \ref{b.1} cannot be 
replaced by assuming that $u \in \Np_0(\Om)$ in 
Theorem~\ref{thm-sing-char}, nor in
Theorem~\ref{thm-char-alt} below.

\begin{proof}
If $u$ is a singular function, then $u$ is \p-harmonic in $\Om \setm \{x_0\}$
and $u \in \Np_0(\Om)$, 
by
Theorem~\ref{thm-uniq>0}.

Conversely, assume that  $u \in \Np_0(\Om)$
is \p-harmonic in $\Om \setm \{x_0\}$.
As $\Cp(\{x_0\})>0$, we have $u(x_0) <\infty$, by 
\cite[Proposition~1.30]{BBbook}.
By definition, $u=\oHpind{\Om \setm \{x_0\}} u$, and
Lemma~\ref{lem-representation-cap-pot} implies that
\[
u=\oHpind{\Om \setm \{x_0\}} u = \oHpind{\Om \setm \{x_0\}} (u(x_0) v) 
= u(x_0) v,
\]
where $v$ is  the lsc-regularized capacitary potential for 
$\{x_0\}$ in $\Om$.
By Theorem~\ref{thm-uniq>0}, $u$ is a singular function in $\Om$.
\end{proof}

We conclude this section by 
summarizing which characterizations are true in both cases
$\Cp(\{x_0\})=0$ and $\Cp(\{x_0\})>0$.
Here \ref{h-d} is added compared with Theorem~\ref{thm-char-alt-intro}.
Recall that $\Om$ is bounded and $\Cp(X \setm \Om)>0$ in this section.

\begin{thm} \label{thm-char-alt}
Let $u\colon\Om \to (0,\infty]$, $j \in \{1,2\}$ and $k \in \{1,2,3\}$.
Assume that $u$ is \p-harmonic in $\Om \setm \{x_0\}$.
Then the following are equivalent\/\textup{:}
\begin{enumerate}
\item 
$u$ is a singular function in $\Om$ with singularity at $x_0$\textup{;}
\item
$u$ satisfies \ref{dd-s} and \ref{dd-Np0}\textup{;}
\item
$u(x_0)=\lim_{x \to x_0} u(x)$ and $u$ satisfies 
\ref{dd-Np0}\textup{;}
\item \label{h-d}
$u$ satisfies\/
\textup{(a.$j$)} and\/ \textup{(b.$k$)}
from Theorem~\ref{thm-sing-char}.
\end{enumerate}
\end{thm}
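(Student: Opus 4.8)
The plan is to read off Theorem~\ref{thm-char-alt} from the two regime-specific characterizations already established, namely Theorem~\ref{thm-sing-char} (for the case $\Cp(\{x_0\})=0$) and Theorem~\ref{thm-sing-char>0} (for the case $\Cp(\{x_0\})>0$), together with the standing hypothesis that $u$ is \p-harmonic in $\Om\setm\{x_0\}$. No new analysis is involved; the entire proof consists of matching up the conditions appearing in the statement with those of the two earlier theorems.

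First I would record the relevant identifications of conditions. Since $\Om$ is bounded, condition~\ref{dd-Np0} is equivalent to the requirement that \ref{b.1} holds for every $r>0$; this is immediate from the definition of $\Np_0(\,\cdot\,;\,\cdot\,)$ and the boundedness of $\Om$, and is exactly the equivalence already noted in Proposition~\ref{prop-existence-conseq}\,\ref{ee-Np1}. Furthermore \ref{dd-s} is the statement \ref{a.1}, and the condition ``$u(x_0)=\lim_{x\to x_0}u(x)$'' is precisely \ref{a.2}. Consequently, item~(2) of the theorem asserts that $u$ satisfies \ref{a.1} and \ref{b.1}, item~(3) asserts that $u$ satisfies \ref{a.2} and \ref{b.1}, and item~(4) asserts that $u$ satisfies \textup{(a.$j$)} and \textup{(b.$k$)} for the chosen $j\in\{1,2\}$ and $k\in\{1,2,3\}$. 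In particular each of items (2), (3) and (4) is of the form ``$u$ satisfies \textup{(a.$i$)} and \textup{(b.$m$)}'' for some $i\in\{1,2\}$ and $m\in\{1,2,3\}$.

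Then I would split into the two cases according to the value of $\Cp(\{x_0\})$. If $\Cp(\{x_0\})=0$, then Theorem~\ref{thm-sing-char}, whose admissible range of the first index is $\{1,2,3,4\}\supset\{1,2\}$, together with the hypothesis that $u$ is \p-harmonic in $\Om\setm\{x_0\}$, shows that for all $i\in\{1,2\}$ and $m\in\{1,2,3\}$ statement~(1) is equivalent to ``$u$ satisfies \textup{(a.$i$)} and \textup{(b.$m$)}''. If $\Cp(\{x_0\})>0$, then Theorem~\ref{thm-sing-char>0} gives the very same equivalence for $i\in\{1,2\}$ and $m\in\{1,2,3\}$. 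Hence, in either case, statement~(1) is equivalent to ``$u$ satisfies \textup{(a.$i$)} and \textup{(b.$m$)}'' for each admissible pair; taking $(i,m)$ to be $(1,1)$, $(2,1)$ and $(j,k)$ in turn yields that (1) is equivalent to (2), to (3) and to (4), respectively, which proves the theorem. The only step demanding any attention is this bookkeeping --- checking that the index ranges in Theorems~\ref{thm-sing-char} and~\ref{thm-sing-char>0} cover what is needed, and that \ref{dd-Np0} may be traded for \ref{b.1} in the bounded setting --- and it poses no genuine difficulty, since all the substance of the result is already contained in those two characterization theorems.
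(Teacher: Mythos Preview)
Your proposal is correct and follows essentially the same approach as the paper: the paper's proof of Theorem~\ref{thm-char-alt} consists of the single sentence that the result follows directly from Theorems~\ref{thm-sing-char} and~\ref{thm-sing-char>0}. Your write-up spells out the bookkeeping (identifying \ref{dd-s} with \ref{a.1}, the limit condition with \ref{a.2}, and \ref{dd-Np0} with \ref{b.1} in the bounded setting) that the paper leaves implicit, but the underlying argument is identical.
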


\begin{proof}[Proof of Theorems~\ref{thm-char-alt-intro} 
and~\ref{thm-char-alt}]
These results follow directly from
Theorems~\ref{thm-sing-char} and~\ref{thm-sing-char>0}.
\end{proof}

We can now also characterize whether $\Cp(\{x_0\})$ is
zero or not in terms of various properties of
singular functions as follows.

\begin{thm} \label{thm-char-Cp=0}
Assume that 
$u$ is a singular function in $\Om$ with singularity at 
$x_0$, and extend
$u$ by letting $u=0$ on $X \setm \Om$.
Then the following are equivalent\/\textup{:}
\begin{enumerate}
\item \label{k-1}
$\Cp(\{x_0\})>0$\textup{;}
\item \label{k-2}
$u(x_0) < \infty$\textup{;}
\item \label{k-bdd}
$u$ is bounded\textup{;}
\item \label{k-3}
$u \in \Np(B_r)$ for some  $r>0$\textup{;}
\item \label{k-4}
$u \in \Np(X)$\textup{;}
\item \label{k-5}
$u \in \Np_0(\Om)$\textup{;}
\item \label{k-6}
$\int_{B_r} g_u^p \, d\mu < \infty$ for some  $r>0$\textup{;}
\item \label{k-7}
$g_u \in L^p(X)$. 
\end{enumerate}
\end{thm}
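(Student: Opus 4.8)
The plan is to use the dichotomy that $\Cp(\{x_0\})$ is either $0$ or strictly positive, and to show that when $\Cp(\{x_0\})>0$ \emph{all} of \ref{k-2}--\ref{k-7} hold, whereas when $\Cp(\{x_0\})=0$ \emph{none} of them holds. Since these two cases are exhaustive and mutually exclusive, this makes each of \ref{k-2}--\ref{k-7} equivalent to \ref{k-1}, and hence all eight conditions equivalent.

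Suppose first that $\Cp(\{x_0\})>0$. The main tool is Theorem~\ref{thm-uniq>0}, which provides a constant $0<b<\infty$ with $b=u(x_0)=\lim_{x\to x_0}u(x)$ and $u=bv$ in $\Om$, where $v$ is the lsc-regularized capacitary potential for $\{x_0\}$ in $\Om$; in particular $v\in\Np_0(\Om)$ by Lemma~\ref{lem-representation-cap-pot}. I would then read off the conditions one by one: $u(x_0)=b<\infty$ is \ref{k-2}; since $u$ is a singular function, property~\ref{dd-sup} gives $\sup_\Om u=u(x_0)=b<\infty$, so $u$ is bounded, which is \ref{k-bdd}; $u=bv\in\Np_0(\Om)$ is \ref{k-5}, which together with the zero extension yields $u\in\Np(X)$, i.e.\ \ref{k-4}, and restricting to a ball yields $u\in\Np(B_r)$ for every $r>0$, i.e.\ \ref{k-3}; finally $g_u=b\,g_v\in L^p(X)$ is \ref{k-7}, whence $\int_{B_r}g_u^p\,d\mu\le\int_X g_u^p\,d\mu<\infty$, i.e.\ \ref{k-6}.

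Suppose next that $\Cp(\{x_0\})=0$. Then Lemma~\ref{lem-existence-conseq=0} gives $u(x_0)=\lim_{x\to x_0}u(x)=\infty$, so \ref{k-2} fails and $u$ is unbounded, so \ref{k-bdd} fails. The remaining conditions are precisely the content of Proposition~\ref{prop-Np0-Cp=0}: part~\ref{n-a} states that $u\notin\Np(B_r)$ for every $r>0$, so \ref{k-3} fails, and a fortiori \ref{k-4} fails since a function in $\Np(X)$ restricts to $\Np(B_r)$; part~\ref{n-b} states that $\int_{B_r}g_u^p\,d\mu=\infty$ for every $r>0$, so \ref{k-6} fails, and a fortiori \ref{k-7} fails since $g_u\in L^p(X)$ would force $\int_{B_r}g_u^p\,d\mu<\infty$; and part~\ref{n-c} states that $u\notin\Np_0(\Om)$, so \ref{k-5} fails.

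I do not expect a genuine obstacle here, since all the substantive work is already carried out in Theorem~\ref{thm-uniq>0}, Lemma~\ref{lem-existence-conseq=0} and Proposition~\ref{prop-Np0-Cp=0}; the only remaining task is to organize these results around the zero/positive-capacity dichotomy and to record the elementary implications between the various Newtonian-space and energy conditions (restriction to balls, and the inclusion $L^p(X)\subset L^p(B_r)$). The one point deserving a word of care is that the lsc-regularized capacitary potential $v$ genuinely lies in $\Np_0(\Om)$ and that, since $u$ is a singular function, \ref{dd-sup} may be invoked to turn $u(x_0)<\infty$ into boundedness of $u$.
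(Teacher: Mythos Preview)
Your proposal is correct and follows essentially the same approach as the paper: split into the cases $\Cp(\{x_0\})>0$ and $\Cp(\{x_0\})=0$, showing that all of \ref{k-2}--\ref{k-7} hold in the first case and all fail in the second, via Theorem~\ref{thm-uniq>0}, Lemma~\ref{lem-existence-conseq=0} and Proposition~\ref{prop-Np0-Cp=0}. The only cosmetic difference is that the paper cites Proposition~\ref{prop-sing-char>0-extra} to obtain $u\in\Np_0(\Om)$ when $\Cp(\{x_0\})>0$, whereas you extract this directly from the representation $u=bv$ together with Lemma~\ref{lem-representation-cap-pot}; these amount to the same thing.
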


\begin{proof}
Assume first that $\Cp(\{x_0\})=0$, i.e.\ \ref{k-1} fails.
In this case, \ref{k-2} fails by Lemma~\ref{lem-existence-conseq=0},
while \ref{k-3},  \ref{k-5} and~\ref{k-6} fail by
Proposition~\ref{prop-Np0-Cp=0}.
Hence also \ref{k-bdd}, \ref{k-4} and 
\ref{k-7} fail.

Assume now instead that $\Cp(\{x_0\})>0$, i.e.\ \ref{k-1} is true.
Then \ref{k-5} is true by Proposition~\ref{prop-sing-char>0-extra},
and thus \ref{k-3}--\ref{k-7} all hold.
Finally, \ref{k-2} and \ref{k-bdd} hold by Theorem~\ref{thm-uniq>0}.
\end{proof}

\section{Superlevel set estimates and Green functions}
\label{sect-Green}

\emph{Recall the standing assumptions from the beginning of 
Section~\ref{case-Cp(x0)=0}.}

\medskip

The following result about superlevel sets of superharmonic functions 
generalizes (and has been inspired by) 
Lemma~3.5 in Holopainen--Shan\-mu\-ga\-lin\-gam~\cite{HoSha}. 
This result holds even without assuming that $\Om$ is connected,
i.e.\ for nonempty open $\Om$ with $\Cp(X \setm \Om)>0$.
Recall that
\[
	\cpessinf_E u  := \sup \{ k\in\R: \Cp(\{ x\in E: u(x)<k \})=0 \}.
\]

\begin{lem}   \label{lem-level-est}
Let $E\subset\Om$ be relatively closed and let $u>0$ be a 
superharmonic function in $\Om$ which is \p-harmonic in $\Om\setm E$
and such that  $\min\{u,k\}\in\Np_0(\Om)$ for all $k>0$.
Then there is a constant $\Lambda>0$ such that
\begin{align*}
\cp(\Om^b,\Om^a)=\cp(\Om^b,\Om_a) &= \Lambda(b-a)^{1-p},
\quad \text{when }
   0 \le a <b\le\cpessinf_E u, \\
\cp(\Om_b,\Om_a) = \cp(\Om_b,\Om^a) 
&= \Lambda(b-a)^{1-p},
\quad \text{when }
   0 \le a <b<\cpessinf_E u,
\end{align*}
where $\Om^b=\{x\in \Om:u(x)\ge b\}$, $\Om_a=\{x\in \Om:u(x)>a\}$
and we interpret $\infty^{1-p}$ as $0$.
\end{lem}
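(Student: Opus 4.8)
The plan is to reduce every claimed identity to the capacitary‑potential formula of Theorem~\ref{thm-cap-pot} by exhibiting a suitable truncation of $u$ as a capacitary potential. Write $c := \cpessinf_E u$ and, for $0<b\le c$ with $b<\infty$, set
$v_b := \min\{u/b,1\} = \tfrac1b\min\{u,b\}$.
A preliminary remark I would record first: for every such $b$ one has $u\ge b$ q.e.\ on $E$, since $\{x\in E:u(x)<b\}=\bigcup_{k\in\mathbf{Q},\,k<b}\{x\in E:u(x)<k\}$ is a countable union of sets of zero capacity; in particular $\Cp((\Om^b\cup E)\setm\Om^b)=\Cp(\{x\in E:u(x)<b\})=0$, where $\Om^b=\{x\in\Om:u(x)\ge b\}$ as in the statement.

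Next I would verify, via Lemma~\ref{lem-representation-cap-pot} applied with the set $\Om^b\cup E$ in place of $E$ there, that $v_b$ is the lsc‑regularized capacitary potential of $(\Om^b\cup E,\Om)$. The four conditions of that lemma are checked as follows: $\Om^b\cup E$ is relatively closed in $\Om$ because its complement $\{x\in\Om\setm E:u(x)<b\}$ is open ($u$ being continuous on the open set $\Om\setm E$); $v_b$ is superharmonic in $\Om$, being the minimum of the superharmonic function $u/b$ and the constant~$1$; $v_b$ is \p-harmonic in $G:=\Om\setm(\Om^b\cup E)=\{x\in\Om\setm E:u(x)<b\}$, where it equals $u/b$; $v_b=1$ on $\Om^b$ and $v_b=1$ q.e.\ on $E$ by the preliminary remark, so $v_b=1$ q.e.\ on $\Om^b\cup E$; and $v_b\in\Np_0(\Om)$ directly from the hypothesis $\min\{u,b\}\in\Np_0(\Om)$. (It is essential here to take the condenser to be $(\Om^b\cup E,\Om)$ and not $(\Om^b,\Om)$: the bare superlevel set $\Om^b$ need not be relatively closed near points of $E$, and $u/b$ need not be \p-harmonic on all of $\Om\setm\Om^b$.) Since $v_b$ is admissible for $\cp(\Om^b,\Om)$ and $(\Om^b\cup E)\setm\Om^b$ has zero capacity, the admissible classes for $\cp(\Om^b,\Om)$ and $\cp(\Om^b\cup E,\Om)$ coincide, so $v_b$ is also a capacitary potential of $(\Om^b,\Om)$ and $\cp(\Om^b,\Om)=\int_\Om g_{v_b}^p\,d\mu$. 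As $X$ is connected while $v_b\in\Np_0(\Om)$ is positive on $\Om$ but vanishes on the nonempty set $X\setm\Om$, $v_b$ is nonconstant, hence $\cp(\Om^b,\Om)>0$, and in particular $\Om^b\ne\emptyset$.

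Now I would extract the constant and conclude. For $0<b_1<b_2\le c$ with $b_2<\infty$ one has $\{x\in\Om:v_{b_2}(x)\ge b_1/b_2\}=\Om^{b_1}$ (as $b_1/b_2<1$), so \eqref{eq-A^a,A} applied to the capacitary potential $v_{b_2}$ gives $\cp(\Om^{b_1},\Om)=(b_1/b_2)^{1-p}\cp(\Om^{b_2},\Om)$, i.e.\ $b_1^{p-1}\cp(\Om^{b_1},\Om)=b_2^{p-1}\cp(\Om^{b_2},\Om)$. Thus $\Lambda:=b^{p-1}\cp(\Om^b,\Om)$ is a well‑defined positive constant, independent of finite $b\in(0,c]$, with $\cp(\Om^b,\Om)=\Lambda b^{1-p}$; since $u>0$ gives $\Om_0=\Om^0=\Om$, this is the asserted identity when $a=0$. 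For $0<a<b\le c$ with $b<\infty$, apply Theorem~\ref{thm-cap-pot} to the capacitary potential $v_b$ with levels $a/b$ and $1$: there $\{v_b\ge1\}=\Om^b$, $\{v_b>a/b\}=\Om_a$, $\{v_b\ge a/b\}=\Om^a$, so $\cp(\Om^b,\Om_a)=\cp(\Om^b,\Om^a)=(1-a/b)^{1-p}\cp(\Om^b,\Om)=\Lambda(b-a)^{1-p}$. For $0\le a<b<c$, pick a finite $\beta$ with $b<\beta\le c$ and apply Theorem~\ref{thm-cap-pot} to $v_\beta$ with levels $a/\beta<b/\beta<1$, obtaining $\cp(\Om_b,\Om_a)=\cp(\Om_b,\Om^a)=((b-a)/\beta)^{1-p}\cp(\Om^\beta,\Om)=\Lambda(b-a)^{1-p}$. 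Finally, if $c=\infty$ and $b=\infty$, then $\Om^\infty\subset\Om^k$ for every finite $k>a$, so monotonicity of the capacity together with the identity just proved gives $\cp(\Om^\infty,\Om_a)\le\cp(\Om^k,\Om_a)=\Lambda(k-a)^{1-p}\to0$ as $k\to\infty$, and likewise with $\Om^a$ in place of $\Om_a$, in accordance with the convention $\infty^{1-p}=0$.

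The main obstacle I expect is exactly the identification in the second paragraph of $v_b$ as a capacitary potential, and in particular the correct choice of condenser: working with $(\Om^b\cup E,\Om)$ keeps the contact set relatively closed and makes $v_b$ literally \p-harmonic on the complement, so that no removable‑singularity theorem is needed, while the discrepancy between $\cp(\Om^b\cup E,\Om)$ and $\cp(\Om^b,\Om)$ is harmless and absorbed using that $E\setm\Om^b$ has zero capacity. Everything after that is bookkeeping with level sets and repeated quotation of Lemma~\ref{lem-pot-est} and Theorem~\ref{thm-cap-pot}.
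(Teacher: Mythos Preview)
Your proof is correct and follows essentially the same route as the paper: identify $\min\{u,k\}/k$ as the lsc-regularized capacitary potential of $(\Om^k\cup E,\Om)$ via Lemma~\ref{lem-representation-cap-pot} (crucially taking $\Om^k\cup E$ rather than $\Om^k$, exactly as you emphasize), pass to $(\Om^k,\Om)$ using $\Cp(E\setm\Om^k)=0$, and then read off all four identities from Theorem~\ref{thm-cap-pot}. The only cosmetic difference is that the paper works with a single fixed truncation level $k$ (namely $k=\cpessinf_E u$ when finite, otherwise any $k>b$) and obtains the constant $\Lambda=k^{p-1}\cp(\Om^k,\Om)$ in one stroke, whereas you truncate at $b$ (and then at an auxiliary $\beta>b$ for the open superlevel sets) and verify separately that $b^{p-1}\cp(\Om^b,\Om)$ is independent of $b$; the edge cases $c=0$ and $b=\infty$ are handled by both of you, though you use monotonicity while the paper invokes polar sets.
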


The set $A=\{x:u(x)=\infty\}$ is a so-called polar set,
and thus $\Cp(A)=0$, by Proposition~2.2 in
Kinnunen--Shanmugalingam~\cite{KiSh06} (or Corollary~9.51 in \cite{BBbook}).
Hence $\cpessinf_E u=\infty$ if and only if $\Cp(E)=0$, 
which in turn happens if and only if 
$\cp(E,\Om)=0$, by Lemma~6.15 in \cite{BBbook}, i.e.\
if and only if the lsc-regularized capacitary potential
of $(E,\Om)$ is identically zero.
In this case it also follows from Lemma~\ref{lem-level-est}
that $u$ must be unbounded
as $\cp(\Om^b,\Om)=\Lambda b^{1-p}>0$ for all $b>0$.

Note that $\Lambda =b^{p-1}\cp(\Om^b,\Om)$ whenever $b$ satisfies the assumptions
above.
In particular if $b=1$ is allowed, then
$\Lambda =\cp(\Om^1,\Om)$.
Note also that even when $E=\{x_0\}$, it is not necessary
for $u$ in Lemma~\ref{lem-level-est} to be a singular function, see
the double-pole function in Example~\ref{ex-double-pole}.

\begin{proof}[Proof of Lemma~\ref{lem-level-est}]
Note that since $u$ is lower semicontinuous, $\Om_a$ is open.
If $\cpessinf_E u=0$, there is nothing to prove and we may let $\Lambda =1$.
(If $E=\emptyset$, we consider $\cpessinf_E u$ and $\inf_E u$ to be $\infty$, as usual.)
As $\Om^\infty$ is a polar set, we have $\Cp(\Om^\infty)=0$ 
and thus $\cp(\Om^\infty,\Om^a)= \cp(\Om^\infty,\Om_a)=0$,
i.e.\ the first formula holds when $b=\infty$.
We assume therefore that $b<\infty$ 
and $\cpessinf_E u>0$
in the rest of the proof.

Let $k=\cpessinf_E u$ if $\cpessinf_E u < \infty$, and $b < k <\infty$ otherwise.
Then $\Cp(E \setm \Om^k)=0$.
As $u$ is continuous 
in $\Om \setm E$,
we see that $\Om^k \cup E$ must be relatively closed.
By Lemma~\ref{lem-representation-cap-pot}, $u_k/k$
is the lsc-regularized capacitary potential of $(\Om^k \cup E,\Om)$,
and thus of $(\Om^k,\Om)$, since $\Cp(E \setm \Om^k)=0$.
Hence, by Theorem~\ref{thm-cap-pot},
\[ 
  \cp(\Om^b,\Om_a)
  = \biggl(\frac{b}{k}-\frac{a}{k}\biggr)^{1-p} \cp(\Om^k,\Om)
  = k^{p-1}(b-a)^{1-p}  \cp(\Om^k,\Om).
\] 
Upon letting $\Lambda =k^{p-1}  \cp(\Om^k,\Om)$,
this proves one identity in the statement of the lemma.
The other three identities then follow from Theorem~\ref{thm-cap-pot}.

To see that $\Lambda >0$, we note
that as $u>0$, there is some 
$b$ such that $0<b<\cpessinf_E u$ and 
$\Cp(\Om^b)>0$,
and hence $\Lambda =b^{p-1}\cp(\Om^b,\Om)>0$, by Lemma~6.15 in~\cite{BBbook}.
\end{proof}

In the proof above we used that $\Om^k \cup E$ is relatively closed.
Observe that it is not always true that $\Om^k$ itself is relatively closed,
as seen by the following example.

\begin{example}
In unweighted $\R^3$ with $p=2$, $x_0=0$ 
and $x_j=(2^{-j},0,0)$, $j=1,2,\ldots$\,,
let
\[
   u(x)=\sum_{j=1}^{\infty}  \frac{4^{-j}}{|x-x_j|},
   \quad 
   \Om=B(0,1)
   \quad \text{and} \quad
   E=\{x_0,x_1,\ldots\}.
\]
By linearity and e.g.\
Lemma~7.3 in Hei\-no\-nen--Kil\-pe\-l\"ai\-nen--Martio~\cite{HeKiMa},
$u$ is superharmonic in $\R^3$ 
and harmonic in $\R^3\setm E$.
As $u(x_j)=\infty$, $j=1,2,\ldots$\,, and
$u(0)=1$, it follows that $\Om^k$ is not relatively closed when  
$k>1$.
\end{example}

Recall Definition~\ref{deff-Green-intro} of Green functions.
We can now 
relate singular and Green functions in the following way.

\begin{thm} \label{thm-normalized-sing} 
Let $v$ be a singular function in $\Om$ with singularity at $x_0$,
and let 
  \[
  \alpha = \begin{cases}
   \cp(\{x \in \Om: v(x) \ge 1\},\Om)^{1/(1-p)}, & \text{if } \Cp(\{x_0\})=0, \\[2mm]
    \displaystyle \frac{1}{v(x_0)}\cp(\{x_0\},\Om)^{1/(1-p)}, & \text{if } \Cp(\{x_0\})>0.
    \end{cases}
    \]
Then $u:=\alp v$ is a Green function in $\Om$ with singularity at $x_0$.
Moreover, \eqref{eq-normalized-Green-intro} holds for $u$, and
$\alp$ is the unique number such that $u$ 
is a Green function.
\end{thm}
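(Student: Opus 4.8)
The plan is to obtain everything from the superlevel-set identity of Lemma~\ref{lem-level-est}, applied to the given singular function $v$ itself, with $E=\{x_0\}$. Indeed, a singular function is positive, superharmonic in $\Om$ and \p-harmonic in $\Om\setm\{x_0\}$, and by Proposition~\ref{prop-existence-conseq}\,\ref{ee-Np2} it satisfies $\min\{v,k\}\in\Np_0(\Om)$ for every $k>0$; so Lemma~\ref{lem-level-est} supplies a constant $\Lambda>0$ with
\[
\cp(\{x\in\Om:v(x)\ge b\},\{x\in\Om:v(x)>a\})=\Lambda(b-a)^{1-p}
\]
whenever $0\le a<b\le\cpessinf_{\{x_0\}}v$. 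The first point to record is that $\cpessinf_{\{x_0\}}v=v(x_0)$ in both cases. If $\Cp(\{x_0\})=0$ this common value equals $\infty$, by the discussion following Lemma~\ref{lem-level-est} and by Lemma~\ref{lem-existence-conseq=0}. If $\Cp(\{x_0\})>0$, then the one-point set $\{x\in\{x_0\}:v(x)<k\}$ has positive capacity unless it is empty, so the ess inf equals $v(x_0)$, which moreover is finite by Theorem~\ref{thm-uniq>0} (or Lemma~\ref{lem-lim-in-R}).

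Next I would rescale. Since $\{x\in\Om:\alp v(x)\ge b\}=\{x\in\Om:v(x)\ge b/\alp\}$ and similarly for the strict superlevel sets, while $(\alp v)(x_0)=\alp v(x_0)$, the identity above turns, for $0\le a<b\le(\alp v)(x_0)$, into
\[
\cp(\{x\in\Om:\alp v(x)\ge b\},\{x\in\Om:\alp v(x)>a\})=\Lambda\alp^{p-1}(b-a)^{1-p}.
\]
Thus the whole statement reduces to checking that $\Lambda\alp^{p-1}=1$, i.e.\ $\alp=\Lambda^{1/(1-p)}$, for $\alp$ as defined in the theorem. For this I use that $\Lambda=b^{p-1}\cp(\{x\in\Om:v(x)\ge b\},\Om)$ for any admissible $b$ (again the discussion after Lemma~\ref{lem-level-est}). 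When $\Cp(\{x_0\})=0$, the choice $b=1$ gives $\Lambda=\cp(\{x\in\Om:v(x)\ge1\},\Om)$, hence $\alp=\cp(\{x\in\Om:v(x)\ge1\},\Om)^{1/(1-p)}$. When $\Cp(\{x_0\})>0$, Theorem~\ref{thm-uniq>0} gives $v=v(x_0)w$ with $w$ the lsc-regularized capacitary potential of $(\{x_0\},\Om)$; then $\{x\in\Om:v(x)\ge b\}=\{x\in\Om:w(x)\ge b/v(x_0)\}$, and Lemma~\ref{lem-pot-est}\,\eqref{eq-A^a,A} yields $\cp(\{x\in\Om:v(x)\ge b\},\Om)=(b/v(x_0))^{1-p}\cp(\{x_0\},\Om)$, so $\Lambda=v(x_0)^{p-1}\cp(\{x_0\},\Om)$ and $\alp=v(x_0)^{-1}\cp(\{x_0\},\Om)^{1/(1-p)}$. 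In each case this agrees with the definition of $\alp$.

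With $\Lambda\alp^{p-1}=1$ in hand, the last display is exactly \eqref{eq-normalized-Green-intro} for $u=\alp v$; taking $a=0$ (and using $\{x\in\Om:u(x)>0\}=\Om$, since $u>0$) gives $\cp(\{x\in\Om:u(x)\ge b\},\Om)=b^{1-p}$ for $0<b<u(x_0)$, which is the normalization in Definition~\ref{deff-Green-intro}. That $u=\alp v$ is still a singular function is immediate, since positivity, superharmonicity, \p-harmonicity off $x_0$, and properties \ref{dd-sup}--\ref{dd-Np0} are all preserved under multiplication by a positive constant. For uniqueness, if $\beta v$ were also a Green function, then for $0<b<\beta v(x_0)$ the rescaled identity gives $\cp(\{x\in\Om:v(x)\ge b/\beta\},\Om)=\Lambda\beta^{p-1}b^{1-p}$, which must equal $b^{1-p}$ by Definition~\ref{deff-Green-intro}; hence $\beta^{p-1}=\Lambda^{-1}=\alp^{p-1}$ and $\beta=\alp$. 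The only slightly delicate points are matching the range of validity of Lemma~\ref{lem-level-est} with the range $0\le a<b\le u(x_0)$ asked for in \eqref{eq-normalized-Green-intro} (handled by the identity $\cpessinf_{\{x_0\}}v=v(x_0)$ recorded at the outset) and evaluating $\Lambda$; once the closed form $\Lambda=b^{p-1}\cp(\{x\in\Om:v(x)\ge b\},\Om)$ and, in the positive-capacity case, the identity \eqref{eq-A^a,A} are invoked, the rest is bookkeeping with the scaling $u=\alp v$.
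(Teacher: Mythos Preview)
Your proof is correct and follows essentially the same approach as the paper: both apply Lemma~\ref{lem-level-est} (with $E=\{x_0\}$) as the central tool and reduce everything to checking the normalization constant. The only cosmetic difference is that the paper applies the lemma directly to $u=\alp v$ and verifies $\Lambda=1$, whereas you apply it to $v$, obtain a general $\Lambda$, and then rescale to show $\Lambda\alp^{p-1}=1$; in the positive-capacity case you compute $\Lambda$ via Lemma~\ref{lem-pot-est}\,\eqref{eq-A^a,A}, while the paper uses Lemma~\ref{lem-representation-cap-pot} to identify $u/u(x_0)$ as the capacitary potential for $\{x_0\}$ directly---but these are equivalent bookkeeping choices.
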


\begin{proof}
Let $u=\alp v$ and $\Om^b=\{x\in \Om:u(x)\ge b\}$ 
for $b \ge 0$.
Clearly, $u$ is a singular function in $\Om$ with singularity at $x_0$.

If $\Cp(\{x_0\})=0$, then by the definition of $u$ and $\alp$,
\[
   \cp(\Om^{\alp},\Om)=\cp(\{x \in \Om: v(x) \ge 1\},\Om)
   = \alp^{1-p},
\]
and thus  $\Lambda=1$ in Lemma~\ref{lem-level-est}. 

On the other hand,
if $\Cp(\{x_0\})>0$ then $u(x_0)<\infty$, 
by Theorem~\ref{thm-uniq>0}, and $u/u(x_0)$
is a capacitary potential in $\Om$ for $\{x_0\}$, as well as for $\Om^{u(x_0)}$,
by Lemma~\ref{lem-representation-cap-pot}.
Hence,
\[
   \cp(\Om^{u(x_0)},\Om)=\cp(\{x_0\},\Om)
   = (\alp v(x_0))^{1-p} = u(x_0)^{1-p},
\]
and so $\Lambda=1$ in Lemma~\ref{lem-level-est} also in this case.

Now, \eqref{eq-normalized-Green-intro-deff} and 
\eqref{eq-normalized-Green-intro} follow from Lemma~\ref{lem-level-est}.
Since \eqref{eq-normalized-Green-intro-deff} holds, $\alp$ must be unique.
\end{proof}

By Lemma~\ref{lem-level-est}, it is enough if 
the normalization
\eqref{eq-normalized-Green-intro-deff}
holds for one $b$,
and we may e.g.\ let $b=\min\{1,u(x_0)\}$.
Thus a singular function is a Green function if and only if
\begin{equation}\label{eq-green-normalization}
     \begin{cases}
             \cp(\Om^1,\Om)=1, & \text{if } u(x_0) \ge 1, \\
             \cp(\Om^{u(x_0)},\Om)=u(x_0)^{1-p}, & \text{if } u(x_0) < \infty.
     \end{cases}
\end{equation}
When $1 \le u(x_0)<\infty$ it is enough to require either condition.
It is always true that $\cp(\Om^{u(x_0)},\Om)=\cp(\{x_0\},\Om)$,
and thus if $u(x_0) < \infty$ 
we may equivalently require
that 
\begin{equation} \label{eq-u-cap-x0}
    u(x_0)=\cp(\{x_0\},\Om)^{1/(1-p)}.
\end{equation}
Note that it can happen that 
$\Om^{u(x_0)} \ne \{x_0\}$, e.g.\
when $X=[0,\infty)$, $\Om=[0,2)$ and $x_0=1$,
in which case $\Om^{u(x_0)}=[0,1]$.

\begin{remark} \label{rmk-classical-Green}
In weighted $\R^n$ with a \p-admissible weight $w$,
the classical Green function is defined as an (extended real-valued) continuous
weak solution $u$, 
with zero boundary values on $\bdy\Om$ (in Sobolev sense),
of the equation
\[ 
\Div(w|\nabla u|^{p-2}\nabla u) = -\delta_{x_0}
\quad \text{in } \Om,
\] 
i.e.\
\[
\int_\Om |\grad u|^{p-2} \grad u \cdot \grad\phi  \,d\mu = \phi(x_0) 
\quad \text{for all } \phi\in C_0^\infty(\Om).
\]
Here $\delta_{x_0}$ is the Dirac measure at $x_0$
and $d\mu=w\,dx$.
In particular, $u$ is \p-harmonic in $\Om\setm\{x_0\}$.

As $C_0^\infty(\Om)$ is dense in $W^{1,p}_0(\Om,\mu)$, we can test
the above integral identity with 
$\phi=\min\{u,1\}\in W^{1,p}_0(\Om,\mu)$. 
If $u(x_0)\ge 1$, this
yields
\[
1=\phi(x_0) = \int_{u<1} |\grad u|^{p} \,d\mu =  \cp (\Om^1,\Om).
\]
On the other hand, if $u(x_0)<1$, then $u/u(x_0)$ is a capacitary potential
of $(\Om^{u(x_0)},\Om)$, 
by Lemma~\ref{lem-representation-cap-pot}, and it follows that 
\[
u(x_0) = \phi(x_0) = \int_{u<u(x_0)} |\grad u|^{p} \,d\mu = u(x_0)^p \cp (\Om^{u(x_0)},\Om).
\]
Hence~\eqref{eq-green-normalization} holds in both cases, and we
conclude that
Definition~\ref{deff-Green-intro} is equivalent to the
classical definition of Green functions in weighted $\R^n$. 
In Section~\ref{sect-Cheeger} we show that the corresponding
equivalence holds also in the metric setting for 
Cheeger--Green functions defined via the differential structures
introduced by Cheeger~\cite{Cheeg}. 
\end{remark}

Using 
the superlevel set estimates in Lemma~\ref{lem-level-est} and 
the Harnack inequality in
Proposition~\ref{prop-punctured-ball}, we 
can now prove Theorem~\ref{thm-compare-bdy-ball}.

\begin{proof}[Proof of Theorem~\ref{thm-compare-bdy-ball}]
Let $r>0$ be such that $B_{50\lambda r}\subset\Om$ and define
\[
m = \min_{\bdry B_r} u \quad \text{and} \quad M = \max_{\bdry B_r} u,
\]
which exist and are finite as $u$ is \p-harmonic (and thus continuous) in
$\Om \setm \{x_0\}$.
The weak minimum principle for superharmonic functions
shows that $B_r \subset\Om^m$. 
Hence, by Proposition~\ref{prop-punctured-ball} and
\eqref{eq-normalized-Green-intro-deff},
\[
M^{1-p} 
\simeq  m^{1-p}
= \cp(\Om^m,\Om) 
\ge \cp(B_r,\Om).
\]
If $u(x_0)=M<\infty$,
then $\Cp(\{x_0\})>0$, 
and thus by \eqref{eq-u-cap-x0},
\[
M^{1-p}=u(x_0)^{1-p}=\cp(\{x_0\},\Om)\le \cp(B_r,\Om).
\]
On the other hand, if $u(x_0)>M$, then 
$u=H_{\Om \setm \itoverline{B}_r} u \le M$ in $\Om \setm \itoverline{B}_r$,
by the comparison principle \eqref{eq-H-comparison-principle},
and thus $\Om_M\subset B_r$.
As $u$ is a Green function, it follows 
from \eqref{eq-normalized-Green-intro-deff} that $\Lambda=1$ in 
Lemma~\ref{lem-level-est}, which thus gives
\[
     M^{1-p} = \cp(\Om_M,\Om) \le \cp(B_r,\Om).
\]
Hence, in either case, 
\[
    m \simeq M \simeq \cp(B_r,\Om)^{1/(1-p)}.
    \qedhere
\]
\end{proof}

\begin{remark} \label{rmk-DGM}
As mentioned in the introduction,
Theorem~\ref{thm-compare-bdy-ball}
was obtained in some specific cases on metric spaces
by Danielli--Garofalo--Marola~\cite[Theorems~3.1, 3.3 and~5.2]{DaGaMa}.
More precisely, they required that $1<p<\lqq$,
where $\lqq=\sup \lQ$ and
\[
  \lQ  =\biggl\{q>0 : \text{there is $C_q$ so that } 
        \frac{\mu(B_r)}{\mu(B_R)}  \le C_q \Bigl(\frac{r}{R}\Bigr)^q 
        \text{ for } 0 < r < R < \infty
        \biggr\}. 
\]
They however also implicitly assumed that $\lqq \in \lQ$,
see \cite[eq.\ (2.2)]{DaGaMa}, and that $X$ is LLC,
through their use (at the bottom of p.~354) of Lemma~5.3 in 
Bj\"orn--MacManus--Shanmugalingam~\cite{BMS}.
(Here the LLC condition is the same as
in \cite{BMS} or 
Holopainen--Shanmugalingam~\cite{HoSha}.)
As the constant $C_2$ in Theorem~3.1 in \cite{DaGaMa} depends
on $r$, they did not
obtain \eqref{eq-compare-bdy-ball}
when $p=\lqq \in \lQ$.
Note also that $\lqq$ is not the natural exponent
for determining when $\Cp(\{x_0\})>0$, see 
Bj\"orn--Bj\"orn--Lehrb\"ack~\cite[Proposition~1.3]{BBLeh1}
and Remark~\ref{rem-zero-cap}.
\end{remark}

When $\Om\subset\R^n$ (unweighted) is a bounded domain,
then two singular functions having singularity at $x_0\in\Omega$ are
multiples of each other. This follows from the results of
Serrin~\cite{Ser1965} and 
Kichenassamy--Veron~\cite{KichVeron}.
More precisely, if $1<p\le n$ and $u$ and $v$
 are such singular functions,
then by Theorem~3 in Serrin~\cite{Ser1965} 
there are positive constants $C_1$ and $C_2$
such that $-\Delta_p u = C_1 \delta_{x_0}$ and 
$-\Delta_p v = C_2 \delta_{x_0}$ in $\Omega$.
Hence there is $\lambda\in\R$ such that $-\Delta_p (\lambda u) = C_2 \delta_{x_0}$ in $\Omega$.
Since $\lambda u=v=0$ on $\partial\Omega$ and the solutions of such equations are 
unique by Theorem~2.1 in~\cite{KichVeron}, we conclude that $v=\lambda u$ in $\Omega$.
Theorem~2.1 in~\cite{KichVeron} is stated for regular $\Om$, but the
uniqueness part does not require any regularity of $\Om$,
since (2.8) therein follows directly from
the corresponding identity, obtained using
the Gauss--Green formula,  in a ball containing $\Om$. 
It is this use of the Gauss--Green formula which makes
the uniqueness argument
only applicable in unweighted $\R^n$.

In our generality we have not been able to 
prove such uniqueness, 
but we can 
show that two singular functions 
with the same singularity are always comparable in $\Om$.
This is based on part \ref{thm-intro-c} of Theorem~\ref{thm-main-intro},
which gives a stronger comparability result for Green functions. We collect here also
the proofs of the other parts of that theorem.

\begin{proof}[Proof of Theorem~\ref{thm-main-intro}] 
\ref{thm-intro-b}
This is a less refined form of Theorem~\ref{thm-normalized-sing}.
    
\ref{thm-intro-a}
If $\Cp(X \setm \Om)=0$, then Proposition~\ref{prop-bdd-cp=0} 
shows that there is no singular function.
On the other hand, if $\Cp(X \setm \Om)>0$ then
the existence of a singular function follows
from Theorems~\ref{thm-existence} and~\ref{thm-uniq>0}.
In view of \ref{thm-intro-b} this shows \ref{thm-intro-a}.

\ref{thm-intro-c}
Let $r>0$ be so small that $50 \la B_r  \subset \Om$.
By Theorem~\ref{thm-compare-bdy-ball},
$u \simeq v$ in $\itoverline{B}_r$.

Let $u=v=0$ on $X \setm \Om$.
As $u$ is \p-harmonic in $\Om \setm \{x_0\}$ and 
$u \in \Nploc(X \setm \{x_0\})$,
we see that, by definition, 
$\oHpind{\Om \setm \itoverline{B}_r} u=u$ in 
$\Om \setm \itoverline{B}_r$,  
and similarly for $v$.
By the comparison principle in \eqref{eq-H-comparison-principle},
\[
    u 
    =\oHpind{\Om \setm \itoverline{B}_r} u 
\simeq \oHpind{\Om \setm \itoverline{B}_r} v
    = v
  \quad \text{in } \Om \setm \itoverline{B}_r.
\]
The last part, for $\Cp(\{x_0\})>0$, follows
from \ref{thm-intro-b} and Theorem~\ref{thm-uniq>0}.
\end{proof}

The comparability result for singular functions,
but with comparison constants also depending on $u$ and $v$,
now follows from 
Theorem~\ref{thm-main-intro}\,\ref{thm-intro-b}--\ref{thm-intro-c}. 
When $\Cp(\{x_0\})>0$, 
Theorem~\ref{thm-uniq>0} 
allows us to say more, namely that
singular functions are unique up to a multiplicative factor.
However, regardless of the value of $\Cp(\{x_0\})$, we have
the following characterization of singular functions,
which is a more general version of Theorem~\ref{thm-sing-iff-comp-intro},
valid also when $\Cp(\{x_0\})>0$. 

\begin{thm}   \label{thm-sing-iff-comp}
Let $u$  be a singular function in $\Om$ with singularity at $x_0$,
and let $v\colon\Om \to (0,\infty]$ be \p-harmonic
in $\Om \setm \{x_0\}$.
If $\Cp(\{x_0\})>0$, we also assume that  
$v$ is superharmonic in $\Om$ or that $v(x_0)=\lim_{x\to x_0} v(x)$.

Then $v$ is a singular function in $\Om$ with singularity at $x_0$
if and only if $v \simeq u$,
with comparison constants depending on $u$ and $v$.
\end{thm}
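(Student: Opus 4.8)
The plan is to deduce the statement from results already established: the comparison theorem for Green functions for one direction, and the characterizations of singular functions for the other. For the forward implication, suppose $v$ is a singular function in $\Om$ with singularity at $x_0$. By Theorem~\ref{thm-normalized-sing} (equivalently Theorem~\ref{thm-main-intro}\,\ref{thm-intro-b}) there are constants $\alp,\beta>0$ such that $\alp u$ and $\beta v$ are Green functions in $\Om$ with singularity at $x_0$; Theorem~\ref{thm-main-intro}\,\ref{thm-intro-c} then gives $\alp u\simeq\beta v$ with universal comparison constants, hence $u\simeq v$ with comparison constants that additionally depend on $\alp$ and $\beta$, i.e.\ on $u$ and $v$. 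When $\Cp(\{x_0\})>0$ one may alternatively invoke Theorem~\ref{thm-uniq>0}, which even yields $v=cu$ for a constant $c$.

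For the converse, assume $v\simeq u$. The first step is to transfer the boundary behaviour from $u$ to $v$: since $u$ is a singular function, Proposition~\ref{prop-existence-conseq} gives that $u$ is bounded in $\Om\setm B_r$ for every $r>0$ and that $\lim_{\Om\ni x\to y}u(x)=0$ for q.e.\ $y\in\bdy\Om$; as $v>0$ and $v\simeq u$, the same holds for $v$, so $v$ satisfies \ref{b.2} of Theorem~\ref{thm-sing-char}. It then remains to verify one of the conditions (a.$j$). If $\Cp(\{x_0\})=0$, then $u(x_0)=\infty$ by Lemma~\ref{lem-existence-conseq=0}, and $v\simeq u$ forces $v(x_0)=\infty$ as well as $\liminf_{x\to x_0}v(x)=\infty$, so $v$ satisfies \ref{a.4}; Theorem~\ref{thm-sing-char} then shows that $v$ is a singular function. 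If $\Cp(\{x_0\})>0$, the additional hypothesis is exactly \ref{a.1} (when $v$ is superharmonic in $\Om$) or \ref{a.2} (when $v(x_0)=\lim_{x\to x_0}v(x)$), so $v$ satisfies (a.$j$) for some $j\in\{1,2\}$ together with \ref{b.2}, and Theorem~\ref{thm-sing-char>0} shows that $v$ is a singular function.

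Since the heavy lifting sits in Theorems~\ref{thm-main-intro}, \ref{thm-sing-char} and~\ref{thm-sing-char>0}, no step is genuinely hard; the one point worth care is the role of the extra hypothesis in the case $\Cp(\{x_0\})>0$. The comparison $v\simeq u$ alone does not force $v$ to be superharmonic in $\Om$, nor to satisfy $v(x_0)=\lim_{x\to x_0}v(x)$ — the function displayed after Theorem~\ref{thm-sing-char>0} is \p-harmonic in $(-1,1)\setm\{0\}$ and comparable to the capacitary potential of $\{0\}$ in $(-1,1)$, yet is not superharmonic — which is exactly why this hypothesis is imposed and cannot be dropped.
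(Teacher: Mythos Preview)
Your proof is correct and follows essentially the same approach as the paper: the forward direction via Theorem~\ref{thm-main-intro}\,\ref{thm-intro-b}--\ref{thm-intro-c}, and the converse by transferring \ref{b.2} from $u$ to $v$ via Proposition~\ref{prop-existence-conseq} and then invoking the characterization theorems. The only cosmetic differences are that the paper verifies \ref{a.2} rather than \ref{a.4} in the case $\Cp(\{x_0\})=0$ and cites the unified Theorem~\ref{thm-char-alt} instead of Theorems~\ref{thm-sing-char} and~\ref{thm-sing-char>0} separately.
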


\begin{proof}[Proof of Theorems~\ref{thm-sing-iff-comp-intro} 
and~\ref{thm-sing-iff-comp}]
If $v$ is a singular function, then $v \simeq u$ by 
Theorem~\ref{thm-main-intro}\,\ref{thm-intro-b}--\ref{thm-intro-c}. 

Conversely, if $v \simeq u$ then $v$ 
automatically satisfies \ref{b.2} in Theorem~\ref{thm-sing-char}
since $u$ does 
(by Proposition~\ref{prop-existence-conseq}).
Moreover, if $\Cp(\{x_0\})=0$ 
then $u(x_0)=\lim_{x\to x_0} u(x)= \infty$,
and thus also $v(x_0)=\lim_{x\to x_0} v(x)= \infty$, i.e.\ \ref{a.2}
in Theorem~\ref{thm-sing-char} holds.
If $\Cp(\{x_0\})>0$ then \ref{a.1} or
\ref{a.2} is true by assumption.
Hence $v$ is a singular function by Theorem~\ref{thm-char-alt}.
\end{proof}

\begin{remark}
The extra assumption in Theorem~\ref{thm-sing-iff-comp} 
when $\Cp(\{x_0\})>0$ cannot be omitted.
Indeed, if 
$X=\R$ (unweighted), $\Om=(-1,1)$ and $x_0=0$, then all the functions
\[
v(x)=\begin{cases}
        a(x+1), &-1 < x <0, \\ 
        1-x, &0 \le x <1,
     \end{cases}
\quad \text{with } a>0,
\]
are \p-harmonic in $\Om\setm\{x_0\}$ and comparable to each other, but 
only the one with $a=1$ is superharmonic and singular in $\Om$ 
with singularity at $x_0$.
\end{remark}

\section{\texorpdfstring{\p}{p}-harmonic functions with poles}
\label{sect-pole}

\emph{Assume in this section that $X$ is complete
and that $\mu$ is doubling and supports a \p-Poincar\'e inequality.
We also fix $x_0 \in X$ and write $B_r=B(x_0,r)$ for $r>0$.}

\medskip

We shall now
apply our results to general \p-harmonic functions
with poles.
Note that there is no relation between $G$ and $U$
in the theorem below.

\begin{thm} \label{thm-pharm-pole}
Let $G$ and $U$ be arbitrary open sets containing $x_0$, such that
$G$ is bounded and $\Cp(X \setm G)>0$.
Let $u$ and $v$ be \p-harmonic functions in $U \setm \{x_0\}$ such that
\begin{equation} \label{eq-pole}
 u(x_0):=   \lim_{x \to x_0} u(x) 
    = \infty
\quad \text{and} \quad 
 v(x_0):=   \lim_{x \to x_0} v(x) 
    = \infty.
\end{equation}
Then the following are true\/\textup{:}
\begin{enumerate}
\item \label{r-a}
  $\Cp(\{x_0\})=0$\textup{;}
\item \label{r-b}
there is a bounded domain $\Om \ni x_0$ and $a\ge 0$
such that $u-a$ is a singular function in $\Om$ with singularity 
at $x_0$\textup{;}
\item \label{r-c}
there is $r_0>0$ such that if $0<r<r_0$ and  $x\in\bdry B_r$, then 
\begin{equation}\label{eq-u-G}
 u(x) \simeq \cp(B_r,G)^{1/(1-p)},
\end{equation}
where the comparison constants depend on $G$ and $u$, but not on $r$\textup{;}
\item \label{r-d}
there is $r_0>0$ such that
\[ 
u \simeq v
\quad \text{in } B_{r_0},
\] 
where the comparison constants depend on $u$ and $v$. 
  \end{enumerate}
\end{thm}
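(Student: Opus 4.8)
The plan is to prove parts \ref{r-a}--\ref{r-d} in turn; in each case the idea is to pass to a small ball $B_\rho$ around $x_0$ with $\overline{B_\rho}\subset U$, so that the bounded-domain theory of Sections~\ref{sect-singular}--\ref{sect-Green} becomes available. For \ref{r-a}, fix $\rho>0$ with $\overline{B_\rho}\subset U$ and $\Cp(X\setm B_\rho)>0$ (possible for small $\rho$, since $X$ has at least two points) and let $\Om_1$ be the component of $B_\rho$ containing $x_0$; then $\Om_1$ is a bounded domain with $\Cp(X\setm\Om_1)\ge\Cp(X\setm B_\rho)>0$ and $u$ is \p-harmonic in $\Om_1\setm\{x_0\}$, so if $\Cp(\{x_0\})>0$ then Lemma~\ref{lem-lim-in-R} (applied with $\Om=\Om_1$) would give $\liminf_{x\to x_0}u(x)<\infty$, contradicting \eqref{eq-pole}. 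Hence $\Cp(\{x_0\})=0$, which we use throughout.

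For \ref{r-b}, choose $\rho_1>0$ with $\overline{B_{\rho_1}}\subset U$, $\Cp(X\setm B_{\rho_1/2})>0$ and $u>0$ on $B_{\rho_1}\setm\{x_0\}$, put $M=\sup_{\overline{B_{\rho_1}}\setm B_{\rho_1/2}}u$ (finite by continuity of $u$ away from $x_0$), pick $a>\max\{M,0\}$, and let $\Om$ be the component of the open set $\{x:u(x)>a\}$ containing $x_0$. Since $u\le M<a$ on $\overline{B_{\rho_1}}\setm B_{\rho_1/2}$, the set $\{u>a\}\cap B_{\rho_1/2}$ is relatively clopen in $\{u>a\}$, so $\Om\subset B_{\rho_1/2}$, $\overline\Om$ is a compact subset of $B_{\rho_1}\subset U$, $\Om$ is a bounded domain, and $\Cp(X\setm\Om)\ge\Cp(X\setm B_{\rho_1/2})>0$. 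Moreover $u\equiv a$ on $\bdy\Om$, since $\bdy\Om\cap\{u>a\}=\emptyset$ while $u\ge a$ on $\bdy\Om$ by continuity from within $\Om$. Extending $u-a$ by $(u-a)(x_0):=\infty$, the function $u-a\colon\Om\to(0,\infty]$ is \p-harmonic in $\Om\setm\{x_0\}$, satisfies \ref{a.2} trivially, and satisfies \ref{b.2} (it is bounded on $\Om\setm B_r$ for every $r>0$, and $\lim_{\Om\ni x\to y}(u-a)(x)=0$ for every $y\in\bdy\Om$); by Theorem~\ref{thm-sing-char}, $u-a$ is a singular function in $\Om$ with singularity at $x_0$, and $a\ge0$.

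For \ref{r-c}, part \ref{r-b} and Theorem~\ref{thm-normalized-sing} give $\alp>0$ such that $\alp(u-a)$ is a Green function in $\Om$, so Theorem~\ref{thm-compare-bdy-ball} yields $u(x)-a\simeq\cp(B_r,\Om)^{1/(1-p)}$ for $x\in\bdy B_r$ with $r$ small, with constants depending on $u$. Since $\Cp(\{x_0\})=0$ we have $\cp(\overline{B_r},\Om)\to0$ as $r\to0$, so $\cp(B_r,\Om)^{1/(1-p)}\to\infty$; hence $u(x)\to\infty$ uniformly on $\bdy B_r$, so $u(x)-a\simeq u(x)$ and $u(x)\simeq\cp(B_r,\Om)^{1/(1-p)}$ on $\bdy B_r$ for small $r$. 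It remains to replace $\Om$ by $G$. Choosing $0<\de_0<R$ with $B_{\de_0}\subset\Om\cap G$ and $\Om\cup G\subset B_R$ gives, by monotonicity, $\cp(B_r,B_R)\le\cp(B_r,\Om),\cp(B_r,G)\le\cp(B_r,B_{\de_0})$, and the scale-free comparison $\cp(B_r,B_{\de_0})\simeq\cp(B_r,B_R)$ for small $r$ then forces $\cp(B_r,\Om)\simeq\cp(B_r,G)$ for small $r$. This comparison holds because, as $\Cp(\{x_0\})=0$, the capacitary potential $w_r$ of $(\overline{B_r},B_R)$ has energy $\cp(\overline{B_r},B_R)\to0$, so by the Sobolev inequality in $\Np_0(B_R)$ and local boundedness of \p-harmonic functions $w_r\le\tfrac{1}{2}$ on $X\setm B_{\de_0}$ for small $r$; then $2(w_r-\tfrac{1}{2})_+$ is admissible for $\cp(\overline{B_r},B_{\de_0})$, giving $\cp(\overline{B_r},B_{\de_0})\le2^p\cp(\overline{B_r},B_R)$, while the reverse inequality is monotonicity and one passes between open and closed balls using doubling and the Poincar\'e inequality. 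Thus $u(x)\simeq\cp(B_r,G)^{1/(1-p)}$ for $x\in\bdy B_r$ and $0<r<r_0$, with constants depending only on $G$ and $u$.

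For \ref{r-d}, apply part \ref{r-c} to both $u$ and $v$ with the same set $G$: there are $C\ge1$ and $r_0>0$, which we may take with $\overline{B_{r_0}}\subset U$, such that $u\le Cv$ and $v\le Cu$ on $\bdy B_r$ for every $0<r<r_0$. For each such $r$, both $u$ and $Cv$ are \p-harmonic in the annulus $\Om_r:=B_{r_0}\setm\overline{B_r}$, so $u=\oHpind{\Om_r}u$ and $Cv=\oHpind{\Om_r}(Cv)$, and since $u\le Cv$ on $\bdy\Om_r\subset\bdy B_{r_0}\cup\bdy B_r$, the comparison principle \eqref{eq-H-comparison-principle} gives $u\le Cv$ in $\Om_r$; letting $r\to0$ and using $u(x_0)=v(x_0)=\infty$ yields $u\le Cv$ in $B_{r_0}$, and symmetrically $v\le Cu$, i.e.\ $u\simeq v$ in $B_{r_0}$. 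The main obstacle is the scale-free capacity comparison $\cp(B_r,B_{\de_0})\simeq\cp(B_r,B_R)$ used in part \ref{r-c}: it is exactly here that $\Cp(\{x_0\})=0$ is essential (so that enlarging the outer ball by a bounded factor changes the capacity only boundedly), and if such a comparison is available as an earlier lemma it should simply be quoted; everything else is routine bookkeeping with the superlevel-set identities, the extension Lemma~\ref{lem-extension}, and the comparison principle.
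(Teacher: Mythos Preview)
Your proof is correct, and the overall architecture matches the paper's: pass to a small superlevel set of $u$, recognize $u-a$ as a singular function there, invoke Theorem~\ref{thm-compare-bdy-ball}, and then compare capacities. There are two genuine differences worth noting.

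First, you prove \ref{r-a} directly from Lemma~\ref{lem-lim-in-R} before constructing $\Om$, whereas the paper establishes \ref{r-b} first and then reads off \ref{r-a} from Theorem~\ref{thm-char-Cp=0}. Your route is a bit more economical here.

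Second, and more substantially, the capacity comparison $\cp(B_r,\Om)\simeq\cp(B_r,G)$ in \ref{r-c} is handled differently. You sandwich both by $\cp(B_r,B_{\de_0})$ and $\cp(B_r,B_R)$ and then argue that these are comparable for small $r$ via energy decay of $w_r$ and local $L^\infty$ bounds; this works, but the step you flag as ``the main obstacle'' is not available as a ready-made lemma in the paper. The paper avoids this entirely by a neater trick: at the outset it chooses $r'$ so small that $B_{r'}\subset G$, which forces $\Om\subset G$. Then, with $u_r$ the capacitary potential of $(B_r,G)$ and $a_r=\max_{\bdy\Om}u_r$, one has $a_r\le a_{r_0}<1$ uniformly in $r\le r_0$ (by the comparison principle and the strong maximum principle), so $(u_r-a_r)/(1-a_r)$ is admissible for $\cp(B_r,\Om)$, giving $\cp(B_r,\Om)\le(1-a_{r_0})^{-p}\cp(B_r,G)$ directly. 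The reverse inequality is monotonicity. This yields the same conclusion with no appeal to Sobolev embeddings or uniform convergence of potentials.

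Your annulus argument for \ref{r-d} via the comparison principle is fine; the paper simply remarks that \ref{r-d} follows from the pointwise estimate in \ref{r-c}, which (combined with the Harnack inequality of Proposition~\ref{prop-punctured-ball} on dyadic annuli) gives $u\simeq v$ throughout $B_{r_0}$ without the extra step.
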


Note that also the radius $r_0$, for which \ref{r-c} and \ref{r-d} 
hold, depends on $u$ (and $v$).
This is easily seen by considering the function 
$|x|^{(p-n)/(p-1)}-c$ in $\R^n$,
$p<n$, for various constants $c \ge 0$.

However, Theorem~\ref{thm-pharm-pole}\,\ref{r-c}--\ref{r-d}
shows that all \p-harmonic functions with
a given pole (i.e.\ such that \eqref{eq-pole} holds)
have growth of the same order
near the pole.
For elliptic quasilinear equations \eqref{eq-dvgA=B}
on unweighted $\R^n$, this is
a classical result due to Serrin~\cite[Theorem~1]{Ser1965}.
On the contrary, results in Bj\"orn--Bj\"orn~\cite{BBpower} 
show that the so-called \emph{quasiminimizers}
(rather than minimizers)
of the \p-energy integral $\int g_u^p\,d\mu$
can have singularities of arbitrary order, depending 
on the quasiminimizing constant.
Quasiminimizers were introduced by 
Giaquinta and Giusti~\cite{GG1},~\cite{GG2} as a natural unification
of elliptic equations with various ellipticity constants.

\begin{proof}
Let $r'>0$ be such that $B_{r'} \Subset U$ and $\Cp(X \setm B_{r'})>0$,
and 
let
\[
M(r) = \max_{\bdry B_r} u
\quad \text{for } 0<r\le r'.
\]
Let $a=\max\{M(r'),0\}$, 
$\Om=\{x \in B_{r'} : u(x) > a\}$ and $\ub=u-a$.
By the strong maximum principle for \p-harmonic functions, $\Om$
must be connected.
It is easy to see that $\ub$ satisfies \ref{a.2} and \ref{b.1} in 
Theorem~\ref{thm-sing-char}
(with $\ub$ in place of $u$). 
As $\ub$ is \p-harmonic in $\Om \setm \{x_0\}$, it follows from  
Theorem~\ref{thm-char-alt}
that $\ub$ is a singular function in $\Om$, i.e.\ \ref{r-b} holds.
Thus \ref{r-a} follows from Theorem~\ref{thm-char-Cp=0}.

Let next 
$r_0>0$ be so small that $B_{50 \la r_0} \subset \Om$.
By the strong minimum principle for superharmonic functions, 
$\inf_{B_{r_0}}u >a \ge 0$ and thus
\[
u \ge u-a \ge Cu \quad \text{in } B_{r_0},
\]
with $C>0$ depending on $a$ and $\inf_{B_{r_0}}u$. 
Theorems~\ref{thm-main-intro}\,\ref{thm-intro-b} 
and~\ref{thm-compare-bdy-ball}, applied to $\ub$, then yield
\begin{equation} \label{eq-u-Om}
    u(x) \simeq \ub(x) \simeq \cp(B_r,\Om)^{1/(1-p)}
\quad \text{whenever } x\in\bdry B_r \text{ and } 0<r<r_0,
\end{equation}
where the comparison constants depend on $u$, $a$ and $r_0$.
This proves \ref{r-c} for
$G=\Om$. 
Also \ref{r-d} follows directly from this, with the same
choice of $r_0$.

Now consider a general open set $G$ in \ref{r-c}.
We may choose $r'$ above so small that $B_{r'} \subset G$.
It follows that $\Om \subset G$.
For $0 < r \le r_0$, let
$u_r$ be the capacitary potential
for $B_r$ in $G$, and set $a_r = \max_{\bdy \Om} u_r$.
Then $0 < a_r \le a_{r_0}< 1$. 
Also let
\[
   v_r=\frac{u_r-a_r}{1-a_r}.
\]
Then $v_r=1$ in $B_r$ and $v_r\le0$ on $X \setm \Om$.
Hence
\begin{align*}
  \cp(B_r,\Om) 
&\le \int_X g_{v_r}^p \, d\mu
\le \biggl(\frac{1}{1-a_r}\biggr)^p \int_X g_{u_r}^p \, d\mu \\
&\le \biggl(\frac{1}{1-a_{r_0}}\biggr)^p \int_X g_{u_r}^p \, d\mu
 = \biggl(\frac{1}{1-a_{r_0}}\biggr)^p  \cp(B_r,G).
\end{align*}
As $\cp(B_r,G) \le \cp(B_r,\Om)$, we
see that  \eqref{eq-u-G} follows from \eqref{eq-u-Om}.
\end{proof}

\section{Local assumptions}
\label{sect-local-assumptions}

In this section we investigate to which extent
our results hold in more general metric measure spaces than those assuming
our three standing assumptions: completeness, doubling measure
and \p-Poincar\'e inequality.
We start by introducing the local assumptions.

\begin{deff}
The measure \emph{$\mu$ is doubling within a ball $B_0$}
if there is $C>0$ (depending on $B_0$)
such that 
\[
\mu(2B)\le C \mu(B) \quad \text{holds for all balls } B \subset B_0.
\]
Similarly, 
the \emph{\p-Poincar\'e inequality holds within a ball $B_0$} 
if there are constants $C>0$ and $\lambda \ge 1$ (depending on $B_0$)
such that \eqref{eq-deff-PI} holds for all balls $B\subset B_0$, 
all integrable functions $u$ on $\la B$, and all 
\p-weak upper gradients $g$ of $u$
within $\la B$.

We also say that any of the above two properties is \emph{local}
if for every $x_0 \in X$ there is $r_0$ (depending on $x_0$) 
such that the property holds within $B(x_0,r_0)$.
If a property holds within every ball $B(x_0,r_0)$ then it is called 
\emph{semilocal}.
\end{deff}

Note that if $\mu$ is semilocally doubling and
$C$ is independent of $x_0$ and $r_0$,
then $\mu$ is doubling according to \eqref{eq:doubling-x0}.
The situation is similar for Poincar\'e inequalities.

The following result from Bj\"orn--Bj\"orn~\cite{BBsemilocal} makes
it possible to generalize the results in this paper to spaces
with local assumptions.
Recall that a space is \emph{proper} 
if every bounded closed subset is compact.

\begin{thm} \label{thm-BBsemilocal}
\textup{(Proposition~1.2 and Theorem~1.3 in 
\cite{BBsemilocal})}
If $X$ is proper and connected, and $\mu$ is locally doubling
and supports a local \p-Poincar\'e inequality, 
then $\mu$ is semilocally doubling
and supports a semilocal \p-Poincar\'e inequality.
\end{thm}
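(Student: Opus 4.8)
The plan is to fix an arbitrary ball $B_0=B(x_0,R)$ and to show that $\mu$ is doubling within $B_0$ and that the \p-Poincar\'e inequality holds within $B_0$, with constants depending on $B_0$; since $B_0$ is arbitrary, this is exactly the claimed semilocal conclusion. Properness makes $\itoverline{B}_0$ compact, so for each $x\in\itoverline{B}_0$ I choose $\rho_x>0$ so small that $\mu$ is doubling and the \p-Poincar\'e inequality holds within $B(x,\rho_x)$ (taking the smaller of the two radii supplied by the local hypotheses), with local constants $C_x$ and $(C_x,\la_x)$ respectively. From the cover $\{B(x,\tfrac12\rho_x):x\in\itoverline{B}_0\}$ I extract a finite subcover centred at $x_1,\dots,x_N$, and put $\delta=\tfrac14\min_i\rho_{x_i}>0$ and $\la_0=\max_i\la_{x_i}$.

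\textbf{Doubling within $B_0$.} Let $B=B(x,r)\subset B_0$. If $r\le\delta$, then $x\in B(x_i,\tfrac12\rho_{x_i})$ for some $i$, and the choice of $\delta$ gives $B\cup 2B\subset B(x_i,\rho_{x_i})$, whence $\mu(2B)\le C_{x_i}\mu(B)$; the maximum of the $C_{x_i}$ covers all such $B$. If $r>\delta$, I claim $2B\subset B(x_0,5R)$: when $r\le 2R$ this is the triangle inequality, and when $r>2R$ the range of the continuous map $y\mapsto d(x,y)$, being an interval containing $0$ but (as follows from $B(x,r)\subset B(x_0,R)\subset B(x,2R)$) omitting the nonempty interval $[2R,r)$, must lie in $[0,2R)$, so $X=B(x,2R)\subset B(x_0,3R)$. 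Hence $\mu(2B)\le M:=\mu(B(x_0,5R))<\infty$, the finiteness coming from properness. On the other hand $\mu(B)\ge\mu(B(x,\delta))\ge m$, where $m:=\inf_{y\in\itoverline{B}_0}\mu(B(y,\delta))>0$: if $y_n\to y$ in $\itoverline{B}_0$ realise the infimum, then $B(y,\tfrac12\delta)\subset B(y_n,\delta)$ eventually, forcing $m\ge\mu(B(y,\tfrac12\delta))>0$. Thus $\mu(2B)\le(M/m)\mu(B)$, and combining the two ranges yields doubling within $B_0$.

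\textbf{The \p-Poincar\'e inequality within $B_0$.} If $B=B(x,r)\subset B_0$ has $\la_0 r\le\delta$, then, after a harmless shrinking of $\delta$, the dilate $\la_0 B$ lies inside some $B(x_i,\rho_{x_i})$ and the local inequality applies verbatim. For the remaining balls I would run the classical chaining argument: given a function $u$ on a suitable dilate of $B$ with upper gradient $g$, and a point $y\in B$, telescope $|u_{B(y,\delta')}-u_B|$ (with $\delta'\simeq\delta$) along a finite chain of balls of radius $\simeq\delta$ following a curve from $y$ to $x$, estimating each link by the small-ball Poincar\'e inequality, and then average over $y\in B$. The three points that keep the constants dependent only on $B_0$ are: (i) the semilocal doubling just proved, which bounds the overlaps of the chain balls and, together with the lower bound $m$, controls the number of links; (ii) the inclusion $2B\subset B(x_0,5R)$ (equivalently $r<2R$, or $X$ bounded), which confines every dilate occurring in the argument to a fixed bounded set and lets the eventual dilation constant be taken uniform, since only the finitely many local scales $\rho_{x_1},\dots,\rho_{x_N}$ and $\la_{x_1},\dots,\la_{x_N}$ enter; and (iii) a uniform quasiconvexity estimate on $\itoverline{B}_0$, namely that any two of its points are joined by a curve of length at most a fixed multiple of their distance, lying in a fixed bounded set.

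\textbf{Main obstacle.} The real work is ingredient (iii). Within each $B(x_i,\rho_{x_i})$ the local doubling property and the local \p-Poincar\'e inequality give quasiconvexity by the classical argument that a doubling Poincar\'e space is quasiconvex, and it remains to patch the $N$ local estimates into a single one on $\itoverline{B}_0$. This is where connectedness is indispensable — a space that is only locally, not globally, well connected can carry local but not semilocal Poincar\'e inequalities — and it enters through the fact that the relation ``joined by a rectifiable curve'' has open equivalence classes, so on the connected space $X$, and hence on a bounded neighbourhood of the compact set $\itoverline{B}_0$, all points are joined, with a length bound that becomes uniform once one chains through the finite cover. Establishing this uniform quasiconvexity, and carefully tracking how the connecting curve and the chain of balls built along it stay inside the fixed bounded set where a Poincar\'e inequality is available, is the delicate step; granting it, the remaining estimates in the chaining are routine and are powered entirely by the semilocal doubling and the volume lower bound $m$.
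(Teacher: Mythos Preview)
The paper does not prove this theorem at all: it is quoted verbatim as Proposition~1.2 and Theorem~1.3 from \cite{BBsemilocal}, and is used as a black box to justify the local assumptions in Section~\ref{sect-local-assumptions}. So there is nothing in the paper to compare your argument against.

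That said, your outline is a reasonable sketch of how such a proof goes, and it matches the overall strategy in \cite{BBsemilocal}. Your treatment of the semilocal doubling is essentially complete and correct; the two-regime split (small radii via the finite cover, large radii via the crude bound $M/m$) is the standard one, and your use of connectedness to handle the pathological case $r>2R$ is neat. For the Poincar\'e inequality you have correctly isolated the real issue --- uniform quasiconvexity on a neighbourhood of $\itoverline{B}_0$ --- and the mechanism by which connectedness enters (open equivalence classes of the ``joined by a rectifiable curve'' relation). What remains genuinely unwritten is the passage from \emph{local} quasiconvexity on each $B(x_i,\rho_{x_i})$ to a \emph{uniform} quasiconvexity constant on all of $\itoverline{B}_0$: the argument you indicate (open classes plus connectedness) gives rectifiable connectedness but not immediately a uniform length bound $\ell(\gamma)\le Cd(x,y)$; one typically gets this by a compactness argument on pairs $(x,y)$ at small distance and a diameter bound at large distance, and this step deserves a line or two. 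Once that is in hand, the chaining you describe is routine, powered exactly by the semilocal doubling and the lower volume bound $m$ you already established.
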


Examples in~\cite{BBsemilocal}
show that properness cannot be replaced by completeness, and
connectedness cannot be dropped from Theorem~\ref{thm-BBsemilocal}.
Moreover, if $\mu$ supports a semilocal Poincar\'e inequality, 
then $X$ is connected.

\medskip

\emph{So, for the rest of this section, we assume that
$X$ is proper and connected, and that $\mu$ is locally doubling
and supports a local \p-Poincar\'e inequality.}

\medskip

As in Keith--Zhong~\cite[Theorem~1.0.1]{KeZh},
a better semilocal $q$-Poincar\'e inequality with some $q<p$ holds
also in this case, by Theorem~5.3 in~\cite{BBsemilocal}.

In \cite[Section~10]{BBsemilocal}, it was explained
how the potential theory of \p-harmonic functions, 
specifically the results in Chapters~7--14 in \cite{BBbook},
hold under these assumptions, with the exception of
the Liouville theorem.
The same is true for the results in this paper,
it is only the dependence of constants on the different
associated parameters that needs to be carefully investigated.
If $X$ is bounded, then the semilocal assumptions are global and 
hence our standing assumptions are equivalent to
the local assumptions above in this case.

If $X$ is unbounded, we let (as before) $\Om$ be a bounded domain
and find a ball $B_0\supset \Om$.
Since $X$ is unbounded, the condition $\Cp(X\setm \Om)>0$ is automatically
satisfied.
Let $\CPI$, $\la$ and $C_\mu$ be the constants
in the \p-Poincar\'e inequality and the doubling condition
within $2 B_0$.
The weak Harnack inequalities then hold for every ball 
$B$ such that $50 \la B \subset\Om$ and with a constant
depending only on $p$, $\CPI$, $\la$ and $C_\mu$,
coming from $2B_0$ as above.
Thus all our estimates depend on these parameters instead
of the constants in the global assumptions, which perhaps do not hold
on~$X$.

\section{Holopainen--Shanmugalingam's definition}
\label{sect-HS}

In this section we compare
our results with the following
definition of singular functions from Holopainen--Shanmugalingam~\cite{HoSha}.
(See below for the precise assumptions on $X$.)

\begin{deff} \label{def-HS-green-Omega}
(Definition~3.1 in \cite{HoSha})
Let $\Omega\subset X$ be a relatively compact domain.
A function
$u\colon X\to [0,\infty]$ is a \emph{singular function 
in the sense of Holopainen--Shanmugalingam}, or
an \emph{HS-singular function}, 
in $\Omega$
with singularity at $x_0 \in \Om$ if

\addjustenumeratemargin{(HS1)}
\begin{enumerate}
\renewcommand{\theenumi}{\textup{(HS\arabic{enumi})}}%
 \item \label{HS-a}
 $u$ is \p-harmonic in $\Omega\setminus\{x_0\}$ and positive in $\Om$\textup{;}
 \item \label{HS-b}
$u|_{X\setminus\Omega} = 0$ q.e.\textup{;}
 \item \label{HS-b2}
$u\in\Np(X\setminus B_r)$
   for all $r>0$\textup{;}
 \item \label{HS-c}
 $\lim_{x\to x_0} u(x) = \cp(\{x_0\},\Omega)^{1/(1-p)}$
  (in particular, $\lim_{x\to x_0} u(x)=\infty$ if $\cp(\{x_0\},\Omega)=0$)\textup{;}
 \item \label{HS-d}
For $0\le a < b<\sup_\Om u$,
\begin{equation}\label{eq-explicit}
\biggl(\frac{p-1}{p}\biggr)^{2(p-1)}(b-a)^{1-p}
\le \cp(\Omega^b,\Omega_a)\le p^2 (b-a)^{1-p},
\end{equation}
 where $\Omega^b = \{x\in\Omega : u(x)\ge b\}$ and 
 $\Omega_a = \{x\in\Omega : u(x) > a\}$.
\end{enumerate}
\end{deff}

The existence of such a function, when $x_0\in\Omega\subset X$ and $\Omega$ is
a relatively compact domain, 
was given in~\cite[Theorem~3.4]{HoSha}
under the assumptions 
that $X$ is connected, locally compact, noncompact and satisfies the 
so-called LLC
property, and that $\mu$ is locally doubling and supports a local
$q$-Poincar\'e inequality for some $1\le q<p<\infty$,
cf.\ Remark~2.4 in \cite{HoSha}.
These local assumptions are as defined in \cite{HoSha} and
are stronger than those in Section~\ref{sect-local-assumptions}.
In fact, they coincide with those called semiuniformly local in 
Bj\"orn--Bj\"orn~\cite{BBsemilocal}.

\begin{remark} \label{rmk-HS-existence}
From the proof of \cite[Theorem~3.4]{HoSha} it is not clear 
why the function called $g$ on p.\ 322 therein satisfies \ref{HS-b2}
in the case when $\Cp(\{x_0\})=0$.
This can be justified, at least under the assumptions in this
paper, in a similar way as we do in Lemma~\ref{lem-bdd-u-imp-Np1},
using Perron solutions and the uniqueness result in
Theorem~\ref{thm-Perron-fund}\,\ref{P-C-uniq}.
These tools were however not available at that time.
\end{remark}

In the definition of HS-singular functions above,
the value $u(x_0)$ can be rather arbitrary.
In particular, $u$ is not required to be superharmonic in $\Om$.
However, in order for \ref{HS-d} to be satisfied, one must have
$0 <u(x_0) \le \cp(\{x_0\},\Omega)^{1/(1-p)}$ 
(which automatically holds if $\Cp(\{x_0\})=0$).
In view of \ref{HS-c} it is natural to
let $u(x_0):=\lim_{x \to x_0} u(x)$, and we do so from now on.

We obtain the following relation to our 
Definitions~\ref{deff-sing-intro} and~\ref{deff-Green-intro}.

\begin{prop}\label{prop-HS-vs-us}
Assume that $X$ is a proper connected
metric space,
and that $\mu$ is locally doubling
and supports a local \p-Poincar\'e inequality.
Let $\Om \subset X$ be a bounded domain 
such that $\Cp(X \setm \Om)>0$, and let $x_0 \in \Om$
and $u\colon X \to [0,\infty]$.
\begin{enumerate}
\item \label{i-hs-a}
If $u$ is an HS-singular function in $\Om$ with singularity at $x_0$,
and $u(x_0)=\lim_{x \to x_0} u(x)$, then
$u|_\Om$ is a singular function in $\Om$ in the sense of 
Definition~\ref{deff-sing-intro}.
\item \label{i-hs-b}
If $u$ is a Green function in $\Om$ with singularity at $x_0$
in the sense of Definition~\ref{deff-Green-intro},
then its zero extension $\ut$\/
\textup{(}given by letting $\ut=0$ on $X \setm \Om$\textup{)}
is an HS-singular function in $\Om$ with singularity at $x_0$.
\end{enumerate}

In particular there is an HS-singular function in $\Om$ with singularity at $x_0$.
\end{prop}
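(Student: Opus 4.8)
The plan is to prove the three assertions separately, each time by checking the relevant list of defining conditions, and then to read off the final existence statement from part~\ref{i-hs-b} and the known existence of a Green function. Since $X$ is proper and connected and $\mu$ is locally doubling and supports a local \p-Poincar\'e inequality, Theorem~\ref{thm-BBsemilocal} and the discussion in Section~\ref{sect-local-assumptions} make all of the earlier results applicable here, with the constants coming from a fixed ball $B_0\supset\Om$; I would record this reduction once at the start. I would also note that $X$, being connected with at least two points, has no isolated points, so that $\interior\{x_0\}=\emptyset$ and $\bdy\{x_0\}=\{x_0\}$, and that $\cp(\{x_0\},\Om)=0$ exactly when $\Cp(\{x_0\})=0$.

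For part~\ref{i-hs-a} the quickest route is Theorem~\ref{thm-char-alt}: as $u$ is \p-harmonic in $\Om\setm\{x_0\}$ by \ref{HS-a}, it suffices to verify \ref{dd-s} and \ref{dd-Np0} for $u|_\Om$. Property \ref{dd-Np0} is immediate, since by \ref{HS-b} the zero extension $\ut$ agrees q.e.\ with $u$, and $u\in\Np(X\setm B_r)$ for all $r>0$ by \ref{HS-b2}, whence $\ut\in\Nploc(X\setm\{x_0\})$. For \ref{dd-s} I would split into two cases. If $\Cp(\{x_0\})=0$, then $\cp(\{x_0\},\Om)=0$, so \ref{HS-c} together with the hypothesis $u(x_0)=\lim_{x\to x_0}u(x)$ forces $u(x_0)=\infty=\liminf_{x\to x_0}u(x)$, and Lemma~\ref{lem-extension} shows that $u|_\Om$ is superharmonic in~$\Om$. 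If $\Cp(\{x_0\})>0$, then $u(x_0)=\cp(\{x_0\},\Om)^{1/(1-p)}$ is finite and positive; applying Lemma~\ref{lem-bdd-u-imp-Np1} with $K=\{x_0\}$ and $V=B_r$ (condition \ref{b.11} holding by \ref{HS-b} and \ref{HS-b2}) gives that $u$ is bounded in $\Om\setm B_r$ and that $\lim_{\Om\ni x\to y}u(x)=0$ for q.e.\ $y\in\bdy\Om$; combined with the continuity of $u$ in $\Om\setm\{x_0\}$ and the finite limit at $x_0$, this lets me verify conditions \ref{m-1}--\ref{m-4} of Lemma~\ref{lem-char-cap-pot} for $u/u(x_0)$, so that $u/u(x_0)$ is the lsc-regularized capacitary potential for $\{x_0\}$ in $\Om$ and hence $u|_\Om$ is superharmonic in~$\Om$. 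Theorem~\ref{thm-char-alt} then finishes this part.

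For part~\ref{i-hs-b}, let $u$ be a Green function and $\ut$ its zero extension. Conditions \ref{HS-a} (\p-harmonicity from \ref{dd-h}, positivity by hypothesis) and \ref{HS-b} are immediate, and \ref{HS-b2} is precisely Proposition~\ref{prop-existence-conseq}\,\ref{ee-Np1}. For \ref{HS-c}, Proposition~\ref{prop-existence-conseq}\,\ref{ee-limit} gives $\lim_{x\to x_0}\ut(x)=u(x_0)$, which equals $\infty=\cp(\{x_0\},\Om)^{1/(1-p)}$ when $\Cp(\{x_0\})=0$ (by Lemma~\ref{lem-existence-conseq=0}) and equals $\cp(\{x_0\},\Om)^{1/(1-p)}$ when $\Cp(\{x_0\})>0$ (by \eqref{eq-u-cap-x0}). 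Finally \ref{HS-d} follows a fortiori from the exact identity \eqref{eq-normalized-Green-intro} in Theorem~\ref{thm-normalized-sing}, which gives $\cp(\Om^b,\Om_a)=(b-a)^{1-p}$ for $0\le a<b\le u(x_0)=\sup_\Om u$; one only needs the elementary inequality $\bigl(\tfrac{p-1}{p}\bigr)^{2(p-1)}\le1\le p^2$. Hence $\ut$ is an HS-singular function.

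For the last assertion, the hypothesis $\Cp(X\setm\Om)>0$ together with Theorem~\ref{thm-main-intro}\,\ref{thm-intro-a} (equivalently Theorems~\ref{thm-existence}, \ref{thm-uniq>0} and~\ref{thm-normalized-sing}), valid here by Section~\ref{sect-local-assumptions}, yields a Green function in $\Om$ with singularity at $x_0$, and part~\ref{i-hs-b} turns its zero extension into an HS-singular function. I expect the main obstacle to be the case $\Cp(\{x_0\})>0$ of part~\ref{i-hs-a}: superharmonicity in $\Om$ is not part of Definition~\ref{def-HS-green-Omega}, so it must be recovered, and the cleanest way seems to be to first pin down the boundary behaviour of $u$ via Lemma~\ref{lem-bdd-u-imp-Np1} and then recognize $u$ as a multiple of a capacitary potential through Lemma~\ref{lem-char-cap-pot}; verifying the hypotheses of that lemma at the point $x_0$, which may itself carry positive capacity, is the delicate step.
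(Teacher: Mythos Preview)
Your proof is correct, and for part~\ref{i-hs-b} and the final existence statement it matches the paper's argument essentially line for line.

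For part~\ref{i-hs-a}, however, you take an unnecessary detour. You invoke Theorem~\ref{thm-char-alt} but then choose to verify characterization~(b), i.e.\ \ref{dd-s} and \ref{dd-Np0}, which forces you into the case split and the recovery of superharmonicity via Lemma~\ref{lem-char-cap-pot} when $\Cp(\{x_0\})>0$. But the very same theorem (or Theorem~\ref{thm-char-alt-intro}, which the paper cites) offers characterization~(c): it suffices that $u(x_0)=\lim_{x\to x_0}u(x)$ and that \ref{dd-Np0} holds. The limit condition is given to you as a hypothesis, and you have already checked \ref{dd-Np0} from \ref{HS-b} and \ref{HS-b2}. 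So the ``main obstacle'' you anticipate at the end of your proposal is not an obstacle at all: the implication (c)~$\Rightarrow$~(a) in Theorem~\ref{thm-char-alt} already encodes exactly the argument you carry out by hand (indeed, your verification via Lemma~\ref{lem-char-cap-pot} is essentially the proof of \ref{a.2}~$\Rightarrow$~\ref{a.1} inside Theorem~\ref{thm-sing-char>0}). The paper's proof of part~\ref{i-hs-a} is therefore a single sentence citing Theorem~\ref{thm-char-alt-intro}.
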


\begin{proof}
By the discussion in Section~\ref{sect-local-assumptions}
the results in this paper hold under these assumptions on $X$.
Part~\ref{i-hs-a} follows from Theorem~\ref{thm-char-alt-intro},
while part~\ref{i-hs-b} follows from the definition
of Green functions and Theorem~\ref{thm-normalized-sing}
(which yields \ref{HS-c}).
Finally, the existence follows from
Theorem~\ref{thm-main-intro}\,\ref{thm-intro-a}.
\end{proof}

Requiring the superlevel set estimates in \eqref{eq-explicit} with those
explicit constants, is a weaker type of normalization than
in our definition of Green functions.
However, it was natural in \cite{HoSha} as it used the best
estimates available at the time.

We also remark that while proving the 
existence of HS-singular functions,
Ho\-lo\-pai\-nen and 
Shan\-mu\-ga\-lin\-gam~\cite[formula~(8), p.\ 322]{HoSha} showed that 
estimate~\eqref{eq-compare-bdy-ball} holds,
for $x$ close enough to $x_0$,
for the HS-singular functions obtained by their construction.
Here, we obtain it for all Green functions.
Recall that in this generality, it is not known whether Green functions are
unique when $\Cp(\{x_0\})=0$.

\section{Cheeger--Green functions}
\label{sect-Cheeger}

\emph{Recall the standing assumptions  from
the beginning of Section~\ref{sect-harm}.
In this section we also assume that $\Om$ is bounded
and that $\Cp(X \setm \Om)>0$.}

\medskip

Theorem~4.38 in Cheeger~\cite{Cheeg} shows that,
under our standing assumptions,  the metric space $X$ can be
equipped with a coordinate structure in such a way that 
each Lipschitz function $u$ in $X$ has a vector-valued ``gradient'' 
$D u$, defined a.e.\ in $X$.
Since Lipschitz functions are dense in $\Np(X)$,
this gradient can be extended uniquely to
$\Np(X)$, by Franchi--Haj\l asz--Koskela~\cite[Theorem~10]{FHK} or Keith~\cite{Ke-meas}.
Then $|Du|\simeq g_u$ a.e.\ in $X$ for all $u\in \Np(X)$, 
where the comparison constants are independent of $u$. 
Here and throughout this section
$|\cdot|$ is an inner product norm on some $\R^N$,
related to the Cheeger structure.
Both $|\cdot|$ and the dimension $N$ depend on $x \in X$,
but there is a bound on $N$, which only depends on the doubling
constant and the constants in the Poincar\'e inequality.
By adding dummy coordinates, it can thus be assumed that $Du(x)\in\R^N$,
with the same dimension $N$ for all $x$.

In a general metric space $X$ there is some freedom in choosing the Cheeger
structure.
In (weighted) $\R^n$ we will however always make the natural choice
$Du = \nabla u$, where
$\nabla u$ denotes the Sobolev gradient from
Hei\-no\-nen--Kil\-pe\-l\"ai\-nen--Martio~\cite{HeKiMa}.
In this case $|Du|=g_u$, by Proposition~A.13
in~\cite{BBbook}.
If the weight $w$ on $\R^n$
satisfies $w^{1/(1-p)}\in L^1\loc(\R^n)$ (in particular, if it
is a Muckenhoupt $A_p$ weight) then the Sobolev gradient $\nabla u$
is also the distributional gradient.

It was shown by Haj\l asz and Koskela that
$g_u= |\nabla u|$ also on Riemannian 
manifolds~\cite[Proposition~10.1]{HaKo2000}
and Carnot--Carath\'eodory
spaces~\cite[Proposition~11.6 and Theorem~11.7]{HaKo2000}, equipped
with their natural measures.

\emph{Cheeger\/ \textup{(}super\/\textup{)}minimizers} and
\emph{Cheeger \p-harmonic functions} are defined by replacing $g_u$ and $g_{u+\phi}$ in
Definition~\ref{def-quasimin} with $|Du|$ and $|D(u+\phi)|$. 
Similarly, the \emph{Cheeger variational
capacity} of $E\subset \Om$, denoted $\cpp(E,\Om)$,
and the related capacitary potentials are defined as in
Section~\ref{sect-cap-pot} but with $g_u$ replaced by $|Du|$.
Then all the results we have obtained in the previous sections hold also for the 
corresponding \emph{Cheeger singular} and \emph{Cheeger--Green functions},
which are defined as in
Definitions~\ref{deff-sing-intro} and~\ref{deff-Green-intro},
with obvious modifications.
See Appendix~B.2 in~\cite{BBbook}
for more comments, details and references on 
Cheeger \p-harmonic functions
in general,
and Danielli--Garofalo--Marola~\cite[Section~6]{DaGaMa}
for some specific results 
for Cheeger singular and Cheeger--Green functions.

Due to the additional vector structure of the Cheeger gradient 
it is possible to
make the following definition, which has no counterpart in the case of
general scalar-valued upper gradients.

\begin{deff}   \label{def-supersol}
A function $u\in N^{1,p}\loc(\Omega)$ is a
\emph{\textup{(}super\/\textup{)}solution} in $\Omega$ if
\begin{equation} \label{eq-Cheeger-soln}
\int_{\Omega}|Du|^{p-2}Du\cdot D\varphi\,d\mu\ge 0
           \quad \text{for all (nonnegative) } \phi \in \Lipc(\Om),
\end{equation}
where $\cdot$ is the inner product giving rise to the 
norm $|\cdot|$,
and $\Lipc(\Om)$ denotes the family of Lipschitz functions
with compact support in $\Om$. 
\end{deff}

For solutions, one can  equivalently
replace $\ge$ by $=$ in \eqref{eq-Cheeger-soln},  
which follows directly after testing also with $-\phi$.

It can be shown that a function is a (super)solution if and only
if it is a Cheeger (super)minimizer, the proof is the same as for
Theorem~5.13 in
Hei\-no\-nen--Kil\-pe\-l\"ai\-nen--Martio~\cite{HeKiMa}.
In weighted $\R^n$ with a \p-admissible weight and
the choice $Du = \nabla u$, we have
$g_{u}=|Du|=|\nabla u|$ a.e.\ which implies that
(super)minimizers, Cheeger (super)minimizers and (super)solutions
coincide, and are the same as in \cite{HeKiMa}.  
Similar identities hold also on Riemannian
manifolds and Carnot--Carath\'eodory spaces
equipped with their natural measures.

The following result is contained in Proposition~5.1 
in Bj\"orn--Bj\"orn--Latvala~\cite{BBLat2},
see also Proposition~3.5 and Remark~3.6 in
Bj\"orn--MacManus--Shanmugalingam~\cite{BMS}.

\begin{prop}    \label{prop-supersol-Radon}
For every supersolution $u$
in $\Om$ there is a Radon measure $\nu \in \Np_0(\Om)'$
such that for all $\phi \in \Np_0(\Om)$,
\begin{equation}  \label{eq-deff-Tu}
\int_\Om|Du|^{p-2}Du\cdot D\phi\, d\mu = \int_\Om \phi\, d\nu,
\end{equation}
where $\cdot$ is the inner product giving rise to the 
norm $|\cdot|$. 
\end{prop}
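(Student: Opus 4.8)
The plan is to follow the classical Riesz-type construction of the measure associated with a supersolution, adapted to the Cheeger structure. First I would fix a supersolution $u$ in $\Om$ and define a functional $L$ on $\Lipc(\Om)$ by
\[
L(\phi) = \int_\Om |Du|^{p-2} Du \cdot D\phi \, d\mu.
\]
By \eqref{eq-Cheeger-soln}, $L(\phi) \ge 0$ whenever $\phi \ge 0$, so $L$ is a positive linear functional on $\Lipc(\Om)$. Since $X$ is proper (bounded closed sets are compact) and $\Lipc(\Om)$ is dense in $C_c(\Om)$ in the appropriate sense, one can extend $L$ to a positive linear functional on $C_c(\Om)$: given $\psi \in C_c(\Om)$ with $\operatorname{supp}\psi \subset V \Subset \Om$, approximate $\psi$ uniformly by Lipschitz functions supported in a slightly larger compactly contained open set, and use the bound $|L(\phi)| \le \|\phi\|_{\infty}\, L(\eta)$, where $\eta \in \Lipc(\Om)$ is a fixed cutoff with $\eta \equiv 1$ on $V$ and $0 \le \eta \le 1$; this bound follows from positivity applied to $\|\phi\|_\infty \eta \pm \phi \ge 0$. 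The Riesz representation theorem then yields a (nonnegative) Radon measure $\nu$ on $\Om$ with $L(\phi) = \int_\Om \phi \, d\nu$ for all $\phi \in \Lipc(\Om)$.

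The next step is to upgrade the identity \eqref{eq-deff-Tu} from $\phi \in \Lipc(\Om)$ to all $\phi \in \Np_0(\Om)$, and simultaneously to show $\nu \in \Np_0(\Om)'$, i.e.\ that $\nu$ acts as a bounded linear functional on $\Np_0(\Om)$. For this I would first test with $\phi$ nonnegative and bounded in $\Np_0(\Om)$: by the density of $\Lipc(\Om)$ in $\Np_0(\Om)$ (valid under our standing assumptions) together with a standard truncation and a quasicontinuity argument, one finds Lipschitz $\phi_j \to \phi$ in $\Np_0(\Om)$ and q.e., with the $\phi_j$ uniformly bounded. The left-hand side of \eqref{eq-deff-Tu} converges by H\"older's inequality, since $|Du|^{p-2}Du \in L^{p/(p-1)}(\Om)$ (as $u \in N^{1,p}\loc(\Om)$ — here one uses that minimizers/supersolutions have the requisite local integrability, or one localizes) and $D\phi_j \to D\phi$ in $L^p$. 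The right-hand side converges by dominated convergence once we know $\nu$ is finite on the relevant sets and the $\phi_j$ are controlled; the monotone/bounded convergence is where the care is needed. Passing to the limit gives \eqref{eq-deff-Tu} for bounded nonnegative $\phi$, and the Hölder estimate $\int_\Om \phi\,d\nu = L(\phi) \le C\|D\phi\|_{L^p(\Om)}\|\,|Du|^{p-1}\|_{L^{p/(p-1)}(\Om)}$ then shows that $\nu$ is a bounded functional on the bounded nonnegative cone; splitting a general $\phi \in \Np_0(\Om)$ into positive and negative parts and truncating extends this to all of $\Np_0(\Om)$, giving $\nu \in \Np_0(\Om)'$ and \eqref{eq-deff-Tu} in full.

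The main obstacle I expect is the integrability of $|Du|^{p-1}$ up to the boundary of $\Om$ needed to make the functional bounded on all of $\Np_0(\Om)$ rather than merely locally: a supersolution need not lie in $N^{1,p}(\Om)$ globally. This is precisely why the cited references (Bj\"orn--Bj\"orn--Latvala~\cite{BBLat2}, and Bj\"orn--MacManus--Shanmugalingam~\cite{BMS}) do the real work, and the cleanest route is simply to invoke \cite[Proposition~5.1]{BBLat2} directly, which establishes exactly this statement; the sketch above indicates the mechanism. A secondary technical point is verifying that $L$ is well-defined and positive as a functional on $C_c(\Om)$ — this requires the approximation of continuous functions by Lipschitz ones, which is available since $X$ is a proper (hence locally compact, separable) metric space and Lipschitz functions separate points.
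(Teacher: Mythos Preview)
The paper does not give its own proof of this proposition; it simply records that the result ``is contained in Proposition~5.1 in Bj\"orn--Bj\"orn--Latvala~\cite{BBLat2}, see also Proposition~3.5 and Remark~3.6 in Bj\"orn--MacManus--Shanmugalingam~\cite{BMS}.'' So there is no in-paper argument to compare against, and in the end your proposal lands in the same place: you too invoke \cite[Proposition~5.1]{BBLat2} for the substantive step.

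That said, the sketch you give before deferring has a real gap, and it is precisely the one you flag. Constructing the Radon measure $\nu$ from positivity of $L$ on $\Lipc(\Om)$ via the Riesz representation theorem is correct and routine. But your attempt to upgrade to $\phi\in\Np_0(\Om)$ and to show $\nu\in\Np_0(\Om)'$ rests on the H\"older bound
\[
\int_\Om \phi\,d\nu \le C\,\|D\phi\|_{L^p(\Om)}\,\bigl\|\,|Du|^{p-1}\bigr\|_{L^{p/(p-1)}(\Om)},
\]
which presupposes $|Du|\in L^p(\Om)$. A supersolution is only assumed to lie in $\Nploc(\Om)$, so this global integrability is unavailable and the bound, as written, does not go through; nor does your dominated-convergence step on the right-hand side, since you have no a~priori control of $\nu(\Om)$. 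Your honest acknowledgment of this in the final paragraph is correct: the passage from ``Radon measure on $\Om$ satisfying \eqref{eq-deff-Tu} for $\phi\in\Lipc(\Om)$'' to ``$\nu\in\Np_0(\Om)'$ and \eqref{eq-deff-Tu} for all $\phi\in\Np_0(\Om)$'' is exactly the content supplied by \cite{BBLat2} (using capacitary estimates and quasicontinuity rather than a bare H\"older inequality), and your sketch does not reproduce it. In short, your proposal is not wrong, but its actual proof content coincides with the paper's: cite \cite{BBLat2}.
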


Next we show that the Cheeger--Green functions are
exactly the weak solutions of the \p-Laplace equation 
with the Dirac measure 
on the right-hand side and with zero
boundary values, as in the case of  $\R^n$
considered in Remark~\ref{rmk-classical-Green}.

\begin{thm}\label{thm-cheeger-delta}
Let $u$ be a Cheeger--Green function in $\Omega$ with singularity at
$x_0$.
Then 
\begin{equation}\label{eq-cheeger-green-eq}
\int_\Om |D u|^{p-2} D u \cdot D \phi  \,d\mu = \phi(x_0) 
\quad \text{for all } \phi \in \Lipc(\Om),
\end{equation}
that is, $\Delta_p u = -\delta_{x_0}$ in the weak sense.

Conversely, assume that
$v$ is an\/ \textup{(}extended real-valued\/\textup{)} 
continuous function in $\Om$
such that $|D v|\in L^{p-1}(\Omega)$, 
\ref{dd-Np0} in Definition~\ref{deff-sing-intro} is satisfied,
and $v$ is a solution of~\eqref{eq-cheeger-green-eq}.
Then $v$ is a Cheeger--Green function.
\end{thm}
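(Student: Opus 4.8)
The plan is to prove the two implications separately, transporting all the machinery of the earlier sections to the Cheeger setting (where $|Du|$ plays the role of $g_u$ and Proposition~\ref{prop-supersol-Radon} supplies Riesz measures) and keeping the cases $\Cp(\{x_0\})=0$ and $\Cp(\{x_0\})>0$ in parallel.

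\emph{The forward implication.} Let $u$ be a Cheeger--Green function. For $0<b\le u(x_0)$ the truncation $w_b:=\min\{u/b,1\}$ is the lsc-regularized Cheeger capacitary potential of $(\Om^b,\Om)$, by the Cheeger analogue of Lemma~\ref{lem-representation-cap-pot} together with Proposition~\ref{prop-existence-conseq}; the normalization \eqref{eq-normalized-Green-intro-deff}, via the Cheeger versions of Theorem~\ref{thm-normalized-sing} and Lemma~\ref{lem-level-est}, gives $\int_\Om|Dw_b|^p\,d\mu=\cpp(\Om^b,\Om)=b^{1-p}$. Let $\mu_b$ be the Riesz measure of $w_b$ from Proposition~\ref{prop-supersol-Radon}. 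Since $w_b$ solves the corresponding obstacle problem, $\mu_b$ is carried by $\Om^b$, and testing its representation identity with $\phi=w_b$ gives $\mu_b(\Om)=b^{1-p}$. As $Dw_b=b^{-1}Du$ a.e.\ on $\{u<b\}$ and $Dw_b=0$ a.e.\ on $\{u\ge b\}$, the identity reads
\[
\int_{\{u<b\}}|Du|^{p-2}Du\cdot D\phi\,d\mu=b^{p-1}\int_\Om\phi\,d\mu_b,\qquad\phi\in\Lipc(\Om),
\]
where $b^{p-1}\mu_b$ is a probability measure supported on $\Om^b$. Now I would let $b\uparrow u(x_0)$: since $u$ is continuous in $\Om\setm\{x_0\}$ and (by the Cheeger form of Proposition~\ref{prop-existence-conseq}) bounded away from $x_0$, each $\Om^b$ is relatively closed in $\Om$ and $\bigcap_{b}\Om^b=\{x_0\}$ (when $\Cp(\{x_0\})=0$ this uses $\{u=\infty\}=\{x_0\}$; when $\Cp(\{x_0\})>0$ one instead takes $b=u(x_0)$ directly, $u=u(x_0)v$ with $v$ the Cheeger capacitary potential of $\{x_0\}$ whose Riesz measure is already carried by $\{x_0\}$ and, after the $u(x_0)^{p-1}$ rescaling, has total mass $1$). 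Then the right-hand side tends to $\phi(x_0)$ by continuity of $\phi$ and the shrinking of $\Om^b$, and the left-hand side tends to $\int_\Om|Du|^{p-2}Du\cdot D\phi\,d\mu$ by dominated convergence, which gives \eqref{eq-cheeger-green-eq}. \emph{This dominated-convergence step is the main obstacle}: it requires $|Du|=g_u\in L^{p-1}$ in a neighbourhood of $x_0$ (equivalently $\int_{B_r}g_u^{p-1}\,d\mu<\infty$), which is needed even to make sense of the left-hand side of \eqref{eq-cheeger-green-eq}; this is the Cheeger counterpart of the classical local $L^{p-1}$-integrability of the gradient of a \p-superharmonic function and should be isolated as a separate lemma or cited.

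\emph{The converse implication.} Let $v$ be continuous and extended real-valued in $\Om$ with $|Dv|\in L^{p-1}(\Om)$, let \ref{dd-Np0} hold for the zero extension of $v$, and let $v$ solve \eqref{eq-cheeger-green-eq}. Testing with $\phi\in\Lipc(\Om\setm\{x_0\})$ (so $\phi(x_0)=0$) and using $v\in\Nploc(\Om\setm\{x_0\})$ from \ref{dd-Np0} shows that $v$ is Cheeger \p-harmonic, hence real-valued and continuous in the usual sense, in $\Om\setm\{x_0\}$, so \ref{dd-h} holds; \ref{dd-Np0} together with quasicontinuity of Newtonian functions gives $\lim_{\Om\ni x\to y}v(x)=0$ for q.e.\ $y\in\bdy\Om$. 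Testing with nonnegative $\phi\in\Lipc(\Om)$ shows $v$ is, in the generalized sense, a Cheeger supersolution in $\Om$. Next I would show $v\ge0$: $v$ has no negative pole at $x_0$ (by the Cheeger form of Lemma~\ref{lem-lim-in-R} when $\Cp(\{x_0\})>0$, and because a bounded nonnegative function \p-harmonic across $x_0$ and satisfying \ref{dd-Np0} would have to vanish identically, contradicting \eqref{eq-cheeger-green-eq}, when $\Cp(\{x_0\})=0$), so $v^-=\max\{-v,0\}$ is supported in some set $\Om\setm B_r$ on which $v$ is Cheeger \p-harmonic, $v^-\in\Np_0(\Om)$, and testing \eqref{eq-cheeger-green-eq} with $v^-$ (legitimate since $v^-$ is supported where $|Dv|\in L^p$) yields $-\int_\Om|Dv^-|^p\,d\mu=v^-(x_0)=0$, whence $v^-\equiv0$. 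Then the truncations $v_k=\min\{v,k\}$ are bounded Cheeger superminimizers in $\Om$ — equal to $k$ near $x_0$ when $k<v(x_0)$ (so obtained by gluing with a constant), while for $\Cp(\{x_0\})>0$ one uses instead that $v$ is bounded near $x_0$, by Lemma~\ref{lem-lim-in-R} and the Cheeger form of Proposition~\ref{prop-punctured-ball-char}, to see $v_k\in\Nploc(\Om)$ — so letting $k\to\infty$ shows $v$ is Cheeger superharmonic in $\Om$, i.e.\ \ref{dd-s}, and then $v>0$ in $\Om$ by the strong minimum principle.

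Finally, with \ref{dd-s}, \ref{dd-h}, \ref{dd-Np0} and positivity of $v$ in hand, the Cheeger version of Theorem~\ref{thm-char-alt} shows that $v$ is a Cheeger singular function. By the Cheeger version of Theorem~\ref{thm-normalized-sing} there is a unique $\alpha>0$ with $\alpha v$ a Cheeger--Green function; applying the already-proven forward implication to $\alpha v$ gives $\alpha^{p-1}\int_\Om|Dv|^{p-2}Dv\cdot D\phi\,d\mu=\phi(x_0)$ for all $\phi\in\Lipc(\Om)$, and comparing this with \eqref{eq-cheeger-green-eq} for any $\phi$ with $\phi(x_0)\ne0$ forces $\alpha=1$. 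Hence $v$ is itself a Cheeger--Green function. The only genuinely delicate point in the whole argument is the local $L^{p-1}$-integrability of $g_u$ near $x_0$ used (and implicitly required) in the forward implication; everything else is bookkeeping between the upper-gradient and Cheeger formulations and a careful separation of the two capacity cases.
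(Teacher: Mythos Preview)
Your forward implication is essentially the paper's proof: truncate, invoke Proposition~\ref{prop-supersol-Radon} for the Riesz measure, compute its total mass from the capacitary normalization, and pass to the limit by dominated convergence. The point you flag as ``the main obstacle'' --- local $L^{p-1}$-integrability of $|Du|$ --- is not an open issue but a known property of (Cheeger) superharmonic functions; the paper simply cites Kinnunen--Martio~\cite[Theorem~5.6]{KiMa} (equivalently \cite[Corollary~9.55]{BBbook}) for it, which supplies exactly the domination you need.

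Your converse, however, is considerably more laborious than the paper's. The paper observes that $v$, being continuous on $\Om$, automatically satisfies \ref{a.2} (namely $v(x_0)=\lim_{x\to x_0}v(x)$), and that \ref{dd-Np0} is exactly \ref{b.1}; one appeal to Theorem~\ref{thm-char-alt} then yields that $v$ is a Cheeger singular function in one stroke, with no separate supersolution, positivity, or truncation argument. Your route through testing with $v^-$ and gluing truncations is in the right spirit but has loose ends: the claim that $v^-$ is supported away from $x_0$ (equivalently, ruling out $v(x_0)\le 0$) is not cleanly argued in the case $\Cp(\{x_0\})=0$, and extending \eqref{eq-cheeger-green-eq} from $\Lipc(\Om)$ to $\Np_0$-test functions requires a density argument you only gesture at. For the normalization step, your idea of applying the already-proven forward implication to $\alpha v$ and comparing is neat and equivalent to the paper's direct computation in the style of Remark~\ref{rmk-classical-Green}; both force $\alpha=1$.
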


Note that the assumption $|D v|\in L^{p-1}(\Omega)$ 
in the second part of the statement is natural, since 
it guarantees that the integral 
in~\eqref{eq-cheeger-green-eq} is well-defined,
and it moreover holds for all superharmonic functions,
by
Theorem~5.6 in Kinnunen--Martio~\cite{KiMa}
(or \cite[Corollary~9.55]{BBbook}).

\begin{proof}
Assume first that $\Cp(\{x_0\})=0$.
Write $u_k=\min\{u,k\}$ for $k>0$. 
Then $u_k\in \Np_0(\Om)$ by 
Proposition~\ref{prop-existence-conseq}\,\ref{ee-Np2},
and $u_k$ is a supersolution. 
Let $\nu_k \in \Np_0(\Om)'$ be the corresponding Radon measures given 
by Proposition~\ref{prop-supersol-Radon}. 
Since $u_k$ is Cheeger \p-harmonic in $\Om \setm \Om^k$,
$\nu_k$ is supported on $\Om^k$.
Hence,
by testing~\eqref{eq-deff-Tu} for $\nu_k$ with
$\phi=u_k$, we obtain that
\begin{equation}    \label{eq-k-nu-k}
\int_\Om|Du_k|^{p}\, d\mu = \int_\Om u_k\, d\nu_k = k\nu_k(\Om^k).
\end{equation}
On the other hand, the function $u_k/k$ is the
Cheeger capacitary potential of $(\Omega^k,\Omega)$,
by Lemma~\ref{lem-representation-cap-pot}.
Thus it follows from the 
normalization~\eqref{eq-normalized-Green-intro-deff} of 
Cheeger--Green functions that
\begin{equation}    \label{eq-k-Ch-cap}
\int_\Om|Du_k|^{p}\, d\mu = k^p \cpp(\Om^k,\Om) = k^p k^{1-p} = k,
\end{equation}
and so $\nu_k(\Om^k)=1$ for all $k>0$. 

Let $\phi \in \Lipc(\Om)$ and let $\eps>0$. 
Choose $k_0>0$ so large that
$|\phi(x)-\phi(x_0)|<\eps$ for all $x\in\Omega^{k_0}$ 
(and hence also for all $x\in\Omega^{k}$ whenever $k\ge k_0$); 
note that this is possible by Theorem~\ref{thm-compare-bdy-ball}
and Proposition~\ref{prop-existence-conseq}\,\ref{ee-bdd}.
Then \eqref{eq-deff-Tu}
and the fact that $\nu_k(\Om)=\nu_k(\Om^k)=1$ yield
\begin{align*}
\biggl| \int_\Om |D u_k|^{p-2} D u_k \cdot D \phi  \,d\mu - \phi(x_0) \biggr|
& = \biggl|\int_\Om \phi\, d\nu_k - \phi(x_0) \biggr|\\
& \le \int_{\Om^k} |\phi-\phi(x_0)| \,d\nu_k \le \eps
\end{align*}
for all $k\ge k_0$. 
Since $|D u|\in L^{p-1}(\Omega)$ by 
Theorem~5.6 in Kinnunen--Martio~\cite{KiMa}
(or \cite[Corollary~9.55]{BBbook})
and $\phi \in \Lipc(\Om)$,
we see that
\[
\bigl\lvert|D u_k|^{p-2} D u_k \cdot D \phi \bigr\rvert
\le |D u_k|^{p-1} |D\phi| \le |D u|^{p-1} \|D\phi\|_\infty \in L^{1}(\Omega)
\]
for all $k >0$.
As $D u_k\to Du$ a.e.\ in $\Omega$, we hence obtain 
by dominated convergence that
\[
\biggl| \int_\Om |D u|^{p-2} D u \cdot D \phi  \,d\mu - \phi(x_0) \biggr| \le \eps.
\]
Since this holds for all $\eps>0$, the claimed 
identity~\eqref{eq-cheeger-green-eq} follows when $\Cp(\{x_0\})=0$.

Next, consider the case when $\Cp(\{x_0\})>0$. 
Then we know by Theorem~\ref{thm-uniq>0}
that $u \in \Np_0(\Om)$ and $u$ is Cheeger \p-harmonic in $\Om\setm\{x_0\}$.
Let $\nu$ be the measure provided for $u$ by 
Proposition~\ref{prop-supersol-Radon}.
Since $u$ is Cheeger \p-harmonic in $\Om\setm\{x_0\}$, $\nu$ must be 
supported on $\{x_0\}$ and hence $\int_\Om\phi\,d\nu = \phi(x_0)\nu(\{x_0\})$
for all $\phi\in\Np_0(\Om)$.
Testing \eqref{eq-deff-Tu} with $\phi=u$
then shows as in \eqref{eq-k-Ch-cap} and \eqref{eq-k-nu-k} that
\[
  u(x_0)^{1-p} = \cpp(\Om^{u(x_0)},\Om) 
  =  \frac{1}{u(x_0)^p}  \int_\Om |Du|^p \,d\mu
  = u(x_0)^{1-p} \nu(\{x_0\}),
\]
i.e.\ $\nu(\{x_0\})=1$, which proves the claim when $\Cp(\{x_0\})>0$.

Conversely, 
let $v$ be as in the statement of the theorem. 
Then it is immediate that $v$ is Cheeger \p-harmonic  in $\Omega\setm\{x_0\}$.
Hence $v$ is a Cheeger
singular function by Theorem~\ref{thm-char-alt} 
with \ref{a.2} and~\ref{b.1}.
The normalization~\eqref{eq-green-normalization} for $v$ is now
obtained exactly as in Remark~\ref{rmk-classical-Green}, with $\grad u$ 
replaced by $Dv$, and thus $v$
is a Cheeger--Green function.
\end{proof}

\end{document}